\theoremstyle{plain}
\newtheorem{theorem}{Theorem}[section]
\newtheorem*{theoremH}{Theorem H}
\newtheorem{lemma}[theorem]{Lemma}
\newtheorem{proposition}[theorem]{Proposition}
\newtheorem{corollary}[theorem]{Corollary}
\newtheorem*{corollaryH}{Corollary H}
\newtheorem{observation}[theorem]{Observation}
\newtheorem{fact}[theorem]{Fact}
\theoremstyle{definition}
\newtheorem{definition}[theorem]{Definition}
\newtheorem{remark}[theorem]{Remark}
\newtheorem{example}[theorem]{Example}
\newtheorem{notation}[theorem]{Notation}
\newcommand{\ep}{\varepsilon}
\newcommand{\vf}{\varphi}
\newcommand{\cc}{\subset\subset}
\newcommand{\sipky}{\nearrow\!\!\!\nearrow}
\newcommand{\sipka}{\nearrow}
\newcommand{\dist}{\mathrm{dist}}
\newcommand{\R}{\mathbb{R}}
\newcommand{\N}{\mathbb{N}}
\newcommand{\Lip}{\mathrm{Lip}}
\renewcommand{\epsilon}{\varepsilon}
\renewcommand{\phi}{\varphi}
\renewcommand{\tilde}{\widetilde}
\begin{document}

\title{\large On compositions of d.c.\ 
functions\\ and mappings}

\author{Libor Vesel\'y}
\address{Dipartimento di Matematica\\
Universit\`a degli Studi\\
Via C.~Saldini 50\\
20133 Milano\\
Italy}

\author{Lud\v ek Zaj\'\i\v cek}
\address{Charles University\\
Faculty of Mathematics and Physics\\
Sokolovsk\'a 83\\
186 75 Praha 8\\
Czech Republic}

\email{vesely@mat.unimi.it}
\email{zajicek@karlin.mff.cuni.cz}

 \subjclass{Primary 46B99; Secondary 26B25, 52A41}

 \keywords{d.c.\ function, composition of d.c.\ functions, d.c.\ mapping, delta-convex mapping}

 \thanks{}

\begin{abstract}
A d.c.\ (delta-convex) function on a normed linear space 
is a function representable as a
difference of two continuous convex functions.
We show that an infinite dimensional analogue of Hartman's
theorem on stability of d.c.\ functions under compositions does not hold in general.
However, we prove that it holds in some 
interesting particular cases.
Our main results about compositions are proved in the more general context of d.c.\
mappings between normed linear spaces.
\end{abstract}

\maketitle


\markboth{L.~Vesel\'y and L.~Zaj\'{\i}\v{c}ek}{Compositions of d.c.~mappings}

\section*{Introduction}

Let $C$ be a convex set in a (real) normed linear space $X$. A
function $f\colon C\to\R$ is called {\em d.c.}\ or {\em delta-convex}
if it can be represented as a difference of two continuous convex
functions on $C$. We say that $f$ is locally d.c.\ on $C$, if each $c\in C$ has a convex neighbourhood $U$ such that
 $f$ is d.c.\ on $U \cap C$. A mapping $F\colon C\to\R^n$ is a {\em d.c.\ mapping}
if each of its $n$ components is a d.c.\ function. There exist many articles
which work with d.c.\ functions (see, e.g., the references in \cite{Hi} and \cite{DuVeZa}).

In 1959, P.~Hartman \cite{H} proved the following interesting well-known results.

\medskip
\noindent
(I)\ \  {\em Let $A \subset \R^m$ be a convex set which is either open or closed. 
Let $f\colon A \to \R$ be locally d.c.\ on $A$. Then $f$ is d.c.\ on $A$.}

\medskip

\noindent
(II)\ \  {\em Let $X$ be a normed linear space, $A \subset X$  a convex set which is either open or closed,
and $B \subset \R^n$ an open convex set. If $F\colon A \to B$ is a d.c.\ mapping and $g\colon B\to\R$ 
is a d.c.\ function, then the function
$g\circ F$ is locally d.c.\ on $A$.}

\medskip

In fact, Hartman \cite{H} formulated (II) only for the case $X = \R^m$, but he mentioned 
(see the end of p.707) that his proof
 clearly works also in  more general settings (we could even suppose that $X$ is a topological linear space and $A$ 
is an arbitrary convex set). For a generalization of (II), proved in a
quite different way, see Proposition~\ref{lokdc}.

  Hartman also remarked that his proof of (I) does not work for
infinite dimensional spaces. A corresponding counterexample was provided
by E.~Ko\-peck\'a and J.~Mal\'y \cite{KM}: given a nonempty open convex
set $A\subset\ell_2$, there exists a locally d.c.\ function on $A$ which
is not d.c.\ on $A$. (They also remark without proof that a similar
example can be constructed in each infinite dimensional normed linear space;
we prove this claim in Corollary~\ref{jldc}.)

The results (I) and (II) immediately imply the following superposition theorem.

\begin{theoremH}\label{T:H}
Let $A\subset \R^m$ and $B\subset\R^n$ be convex sets. Let  $A$ be either open or closed, and let $B$ be open. If
$F\colon A\to B$ and $g\colon B\to\R$ are d.c., then the function
$g\circ F$ is d.c.
\end{theoremH}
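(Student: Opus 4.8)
The plan is to obtain Theorem~H as a direct two-step application of the results (I) and (II), exactly as the authors announce. First I would set $X = \R^m$ and invoke (II): since $A \subset \R^m$ is a convex set that is either open or closed, $B \subset \R^n$ is open and convex, $F\colon A \to B$ is a d.c.\ mapping, and $g\colon B \to \R$ is a d.c.\ function, all the hypotheses of (II) are satisfied. Hence (II) yields that the composition $g\circ F$ is locally d.c.\ on $A$.

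Next I would apply (I) to the single function $f := g\circ F$. The domain $A \subset \R^m$ is convex and either open or closed, and by the first step $f$ is locally d.c.\ on $A$. These are precisely the hypotheses of (I), which therefore upgrades the local conclusion to the global one, giving that $g\circ F$ is d.c.\ on $A$. This completes the argument.

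Since the statement is essentially the concatenation of (II) followed by (I), I do not expect any genuine analytic obstacle. The only point requiring care is the bookkeeping of hypotheses: one must check that the ``open or closed'' assumption on $A$ and the openness of $B$ are exactly what each of (I) and (II) demands, so that the output of (II) (local d.c.-ness on $A$) feeds seamlessly into the input of (I), with no gap in the chain. It is worth emphasizing that the finite dimensionality of the domain $\R^m$ is what makes (I) available; as the authors have noted, the analogue of (I) fails in infinite-dimensional spaces, and this is precisely the reason the superposition theorem is formulated only for domains in $\R^m$ rather than in a general normed linear space.
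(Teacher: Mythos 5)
Your proof is correct and is precisely the paper's intended argument: the authors state that results (I) and (II) ``immediately imply'' Theorem~H, i.e., apply (II) with $X=\R^m$ to conclude $g\circ F$ is locally d.c.\ on $A$, then use (I) to pass from locally d.c.\ to d.c.\ on the open-or-closed convex set $A\subset\R^m$. Your hypothesis bookkeeping and your remark that the finite dimensionality of the domain is exactly what makes (I) available both match the paper.
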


\noindent

Note that Hartman did not mention Theorem~H explicitly, but he formulated its corollary
 (obtained by putting $F := (f_1,f_2)$ and $g(x,y):= xy$ or $g(x,y):= x/y$):

\begin{corollaryH}\label{sopo}
Let $A \subset \R^m$ be either an open or a closed convex set. Let $f_1$, $f_2$ be d.c.\ on $A$. Then the
 product $f_1\!\cdot\! f_2$ and, if $f_2(x) \neq 0$ for $x \in A$, the quotient $f_1/f_2$ are d.c.\ functions
  on $A$.
  \end{corollaryH}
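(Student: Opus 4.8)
The plan is to derive Corollary H directly from Theorem H, exactly as the authors hint in the parenthetical remark just before the statement. The whole point is that the product and quotient are obtained as compositions $g \circ F$ with suitable fixed $g$, so that the work has already been done by Theorem H and I need only verify that the ingredients satisfy its hypotheses.

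\begin{proof}
Set $F := (f_1, f_2)\colon A \to \R^2$, which is a d.c.\ mapping since both of its components are d.c.\ on $A$ by assumption. For the product, let $g\colon \R^2 \to \R$ be defined by $g(x,y) := xy$. This $g$ is d.c.\ on all of $\R^2$: indeed $xy = \tfrac14\bigl[(x+y)^2 - (x-y)^2\bigr]$ exhibits $g$ as a difference of two continuous convex functions, so in particular $g$ is d.c.\ on the open convex set $B := \R^2 \supset F(A)$. Applying Theorem~H to $F\colon A \to B$ and $g\colon B \to \R$ yields that $g\circ F = f_1 \cdot f_2$ is d.c.\ on $A$.

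For the quotient, assume $f_2(x)\neq 0$ for all $x\in A$. Since $f_2$ is continuous (being a difference of continuous functions) and $A$ is connected, $f_2$ has constant sign on $A$; replacing $(f_1,f_2)$ by $(-f_1,-f_2)$ if necessary, which changes neither the quotient nor the d.c.\ property, we may assume $f_2 > 0$ on $A$. Then $F(A) \subset B$, where $B := \R \times (0,\infty)$ is an open convex subset of $\R^2$. Define $g\colon B \to \R$ by $g(x,y) := x/y$; it remains to check that $g$ is d.c.\ on $B$. This is where the only real verification lies, and it is a standard fact: on the open convex set $B$ the function $g$ is of class $C^2$ (in fact $C^\infty$), and since $B$ is convex, any $C^2$ function is locally d.c.\ there, hence d.c.\ by Hartman's result~(I) applied to the open convex set $B\subset\R^2$. (Alternatively, one checks directly that $x/y = g_1 - g_2$ for explicit continuous convex $g_1,g_2$ on $B$.) Applying Theorem~H to $F\colon A \to B$ and $g\colon B \to \R$ gives that $g\circ F = f_1/f_2$ is d.c.\ on $A$.
\end{proof}

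The main obstacle, such as it is, is the d.c.\ verification for the quotient map $g(x,y)=x/y$: unlike the product, it has no one-line algebraic decomposition, so one must either produce an explicit convex decomposition on the half-plane $B$ or invoke the smoothness-implies-local-d.c.\ fact together with Hartman's~(I). A secondary point requiring care is the reduction to $f_2>0$, which uses continuity and connectedness of $A$ to fix the sign; without this the target set $B$ would not be convex and Theorem~H would not apply. Everything else is a direct and routine instantiation of Theorem~H.
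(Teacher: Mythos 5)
Your proof is correct and takes exactly the approach the paper intends: the paper offers no proof beyond the parenthetical remark that Corollary~H follows from Theorem~H by taking $F:=(f_1,f_2)$ and $g(x,y):=xy$ or $g(x,y):=x/y$, and this is precisely your argument. Your filled-in details --- the decomposition $xy=\tfrac14\bigl[(x+y)^2-(x-y)^2\bigr]$, the reduction to $f_2>0$ via continuity and connectedness of the convex set $A$, and the verification that $x/y$ is d.c.\ on the half-plane $\R\times(0,\infty)$ because it is $C^2$, hence locally d.c., hence d.c.\ by Hartman's result~(I) --- are all sound and match the facts the paper itself invokes elsewhere (cf.\ the remark after Theorem~\ref{cely} that $C^2$ functions are d.c.\ by Proposition~\ref{C11} and~(I)).
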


 Note that the case of the product can be proved in a more elementary way 
(see \cite{Hi}), but the 
   stability with respect to quotients probably cannot be proved more easily.

Though (I) cannot be used  to generalize  Theorem H
to infinite dimensions, it remained open whether such a generalization is possible.
The present paper concerns this question.
We show that an infinite dimensional analogue of Theorem H does not hold 
(see Corollary~\ref{jldc}): 

 \medskip
 
\noindent
{\em For each infinite dimensional normed linear space $X$, there exists a positive d.c.\ function
$f$ on $X$ such that $1/f$ is not d.c.}

\medskip
 
 However, using a modification of Hartman's methods, we prove (Theorem
\ref{cely}) the following variant of Theorem~H (for other variants see Theorem~\ref{spec}).

 \medskip
 
\noindent {\it
Let $X$ be a normed linear space.
Let $A\subset X$ be an open convex set, and
$F\colon A\to \R^n$ and $g\colon \R^n\to\R$ be d.c. Then the function
$g\circ F$ is d.c.}
\smallskip

\noindent
 Consequently, if $f$, $h$ are d.c.\ on $A$, then, for instance, 
$\exp (f)$ and $\frac{fh}{1+f^2 + h^2}$
  are d.c.\ on $A$ (see the text after Theorem~\ref{cely}).

Another positive result in which $F$ is a real continuous convex (or concave)
function is Proposition~\ref{reflex}. It implies (see Remark~\ref{remreflex}(i)) 
the following: 

\medskip\noindent
{\em Let $X$ be a reflexive Banach space and $f_1$, $f_2$ be continuous convex functions on 
 $X$. If the quotient $f_1/f_2$ is defined on $X$, then it is d.c.}

\medskip\noindent
(Note that the above statement is true only in reflexive spaces, see \cite{HKVZ}.)

We prove our results in a more general context of d.c.\ mappings between
normed linear spaces. In particular, we prove (see Corollary~\ref{bilin})
that, in some interesting cases, the inner product (and even a general ``product''
given by a bilinear mapping) of two d.c.\
mappings is d.c.\ as well.


\section{Preliminaries}\label{S:prelim}

We consider only normed linear spaces over the
reals $\R$.  If $X$ is a normed linear space, we denote by $B_X$ its closed unit ball.
By $B(x,r)$ we denote the open ball with center
$x$ and radius $r$. We say that a Lipschitz mapping
 $F$ is $L$-Lipschitz, if $\Lip F \leq L$, where $\Lip F$ is the (least) Lipschitz constant of $F$.

\begin{definition}[\cite{VeZa}]\label{D:dc}
Let $X,Y$ be normed linear spaces, $C\subset X$ be a convex set, and
$F\colon C\to Y$ be a continuous mapping. We say that $F$ is {\em d.c.}\
(or {\em delta-convex}) if there exists a continuous (necessarily convex)
function $f\colon C\to\R$ such that $y^*\circ F+f$ is convex on $C$
whenever $y^*\in Y^*$, $\|y^*\|\le1$. In this case we say that $f$
controls $F$, or that $f$ is a {\em control function} for $F$.
\end{definition}

\begin{remark}\label{R:dc}
The following facts are easy to prove (cf.~\cite{VeZa}).
\begin{enumerate}
\item[(a)]  For $Y=\R^n$, the above definition of a d.c.\ mapping
coincides with the one in
the beginning of Introduction. Moreover, if $F=(F_1,\dots,F_n)$ and $f_i$ controls $F_i$, then
 $f:= f_1+\dots+f_n$ controls $F$.
\item[(b)] If $g=f_1-f_2$, where $f_1,f_2$ are continuous convex
functions on a convex subset of a normed linear space, then $f_1+f_2$
controls $g$. 
\item[(c)] The notion of delta-convexity does not depend on the choice
of equivalent norms on $X$ and $Y$. 
\end{enumerate}
\end{remark}

A theory of d.c.\ mappings on open convex sets was developed in \cite{VeZa}. Some further
results, together with a survey of main results from \cite{VeZa}, can be
found in \cite{DuVeZa}.
We shall need the following two propositions.

\begin{proposition}[\cite{VeZa}]\label{P:veza}
Let $X,Y,Z$ be normed linear spaces, and let $A\subset X$ and $B\subset Y$ be
convex sets. Let $F\colon A\to B$ and $G\colon B\to Z$ be d.c.\ mappings
with control functions $f\colon A\to\R$ and $g\colon B\to \R$, respectively. If $G$ and
$g$ are Lipschitz on $B$,
then $G\circ F$ is d.c.\ on $A$ with a
control function $h=g\circ F +(\Lip G+ \Lip g)f$.
\end{proposition}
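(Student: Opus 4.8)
The plan is to verify the definition of delta-convexity for $G\circ F$ directly, using the candidate control function $h=g\circ F+(\Lip G+\Lip g)f$. Since $f$ is continuous and $g\circ F$ is a composition of the Lipschitz (hence continuous) function $g$ with the continuous mapping $F$, the function $h$ is continuous; so the only real task is to show that for every $z^*\in Z^*$ with $\|z^*\|\le1$ the function $z^*\circ(G\circ F)+h$ is convex on $A$. (Convexity of $h$ itself will then follow by taking $z^*=0$.)

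Next I would fix $a_1,a_2\in A$ and $t\in[0,1]$, and write $a=ta_1+(1-t)a_2$, $b_i=F(a_i)$, $b=F(a)$, and $b'=tb_1+(1-t)b_2$; note $b'\in B$ since $B$ is convex. Because $G$ is controlled by $g$, the function $z^*\circ G+g$ is convex on $B$, so evaluating it along the segment $[b_1,b_2]$ gives $(z^*\circ G+g)(b')\le t(z^*\circ G+g)(b_1)+(1-t)(z^*\circ G+g)(b_2)$. To pass from the ``linearized'' image $b'$ back to the true image $b$, I would use that $\|z^*\|\le1$ together with the Lipschitz bounds on $G$ and $g$, which yield $(z^*\circ G+g)(b)\le (z^*\circ G+g)(b')+(\Lip G+\Lip g)\,\|b-b'\|$.

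The heart of the argument, and the step I expect to be the main obstacle, is to control the ``nonlinearity defect'' $\|b-b'\|=\|F(a)-tF(a_1)-(1-t)F(a_2)\|$ by the control function $f$ of $F$. Here I would use the duality formula $\|b-b'\|=\sup_{\|y^*\|\le1}y^*(b-b')$ together with the fact that each $y^*\circ F+f$ (for $\|y^*\|\le1$) is convex: convexity along $[a_1,a_2]$ gives $y^*(b-b')\le tf(a_1)+(1-t)f(a_2)-f(a)$, and taking the supremum over such $y^*$ produces $\|b-b'\|\le tf(a_1)+(1-t)f(a_2)-f(a)$, the right-hand side being nonnegative by convexity of $f$.

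Finally, I would substitute this estimate into the previous inequality and rearrange. Writing $(z^*\circ G+g)(b)=z^*\bigl((G\circ F)(a)\bigr)+g\bigl(F(a)\bigr)$ and similarly at $a_1,a_2$, the combined inequality becomes exactly $\bigl(z^*\circ(G\circ F)+h\bigr)(a)\le t\bigl(z^*\circ(G\circ F)+h\bigr)(a_1)+(1-t)\bigl(z^*\circ(G\circ F)+h\bigr)(a_2)$, i.e.\ the required convexity. Since $a_1,a_2,t$ and $z^*$ were arbitrary, this shows that $h$ controls $G\circ F$ and completes the proof.
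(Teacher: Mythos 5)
Your proof is correct and takes essentially the same route as the paper's, which simply cites \cite{VeZa} and notes that the argument there rests on the equivalence of (i) and (iii) in Proposition~1.13 of \cite{VeZa}: your Hahn--Banach step yielding $\|F(a)-tF(a_1)-(1-t)F(a_2)\|\le tf(a_1)+(1-t)f(a_2)-f(a)$ is precisely that cited equivalence, and the rest is the standard Lipschitz/convexity computation, valid with no openness assumption on $A$ or $B$. Nothing is missing; you have in effect inlined the proof that the paper delegates to the reference.
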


\begin{proof}
This was proved in \cite[Proposition~4.1]{VeZa}  assuming that the sets
$A,B$ are also open, since this was the context the authors were interested in.
However, it is easy to see that the proof does not need this additional
assumption. Indeed, the proof is based on the equivalence of (i) and (iii) in 
\cite[Proposition 1.13]{VeZa}, whose
proof does not use  the openness of $A$. 
\end{proof}

\begin{proposition}\label{P:lip}
Let $X,Y$ be normed linear spaces, $C\subset X$ a bounded open convex
set, and $F\colon C\to Y$ a d.c.\ mapping with a Lipschitz control
function. Then $F$ is Lipschitz.
\end{proposition}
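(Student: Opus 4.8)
The plan is to reduce everything to a uniform Lipschitz estimate for the scalar functions $y^*\circ F$, $\|y^*\|\le1$, and to obtain that estimate from a uniform bound on their subdifferentials.

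Write $K=\Lip f$, where $f$ is the given Lipschitz control function (which is automatically convex). First I would fix an interior point $z_0\in C$ and $r_0>0$ with $\overline{B(z_0,r_0)}\subset C$, and set $D=\mathrm{diam}\,C$. For $y^*\in Y^*$, $\|y^*\|\le1$, put $a_{y^*}=y^*\circ F+f$ and $b_{y^*}=f-y^*\circ F$; both are continuous convex and $a_{y^*}+b_{y^*}=2f$. The first step is to show that $\{a_{y^*},b_{y^*}\}$ is \emph{uniformly} bounded on $C$. Since $f$ is Lipschitz on the bounded set $C$ it is bounded, and since $F$ is continuous it is bounded near $z_0$; this gives a uniform upper bound for $a_{y^*},b_{y^*}$ on $B(z_0,r_0)$. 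A continuous convex function bounded above on $B(z_0,r_0)$ has a subdifferential at $z_0$ whose norm is at most (that bound minus its value at $z_0$) divided by $r_0$, so its supporting functionals at $z_0$ are uniformly bounded; the supporting inequality at $z_0$ then yields a uniform lower bound on all of $C$. Finally $a_{y^*}=2f-b_{y^*}$ converts the lower bound for $b_{y^*}$ into an upper bound for $a_{y^*}$, and symmetrically. Thus $|a_{y^*}|,|b_{y^*}|\le M'$ on $C$ for some $M'$ independent of $y^*$.

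The heart of the argument is a uniform bound on the subdifferentials at an \emph{arbitrary} point. Fix $z\in C$ and pick $\xi\in\partial a_{y^*}(z)$, $\eta\in\partial b_{y^*}(z)$ (both nonempty, the functions being continuous convex). Applying the supporting inequality for $a_{y^*}$ at the points $z_0+r_0v$ ($\|v\|=1$) and using $|a_{y^*}|\le M'$ gives $r_0\|\xi\|\le 2M'-\xi(z_0-z)$, and symmetrically $r_0\|\eta\|\le 2M'-\eta(z_0-z)$. The decisive step is to \emph{add} these two inequalities: the uncontrolled linear terms combine into $(\xi+\eta)(z_0-z)$, and since $\xi+\eta\in\partial a_{y^*}(z)+\partial b_{y^*}(z)\subset\partial(2f)(z)$ we have $\|\xi+\eta\|\le 2K$, whence
\[
 r_0\big(\|\xi\|+\|\eta\|\big)\le 4M'+2KD .
\]
Consequently $\|\xi\|\le(4M'+2KD)/r_0=:\Lambda$, a bound independent of $z$ and of $y^*$. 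A convex function with subdifferentials bounded in norm by $\Lambda$ is $\Lambda$-Lipschitz, so each $a_{y^*}$ and each $b_{y^*}$ is $\Lambda$-Lipschitz, and hence so is $y^*\circ F=\tfrac12(a_{y^*}-b_{y^*})$.

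To conclude, I would invoke $\|F(x)-F(x')\|=\sup_{\|y^*\|\le1}\lvert y^*\circ F(x)-y^*\circ F(x')\rvert\le\Lambda\|x-x'\|$, so $F$ is Lipschitz. The step I expect to be the main obstacle is precisely the uniform subdifferential bound: estimating $a_{y^*}$ or $b_{y^*}$ alone only yields a bound of order $M'/\dist(z,\partial C)$, which blows up near $\partial C$. The trick that saves the argument is that adding the two supporting inequalities replaces the bad terms by $(\xi+\eta)(z_0-z)$, controlled because $\xi+\eta$ is a subgradient of the Lipschitz function $2f$.
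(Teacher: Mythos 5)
Your proof is correct. Note that on this point the paper contains no in-text argument at all: its ``proof'' of Proposition~\ref{P:lip} is a citation of \cite[Theorem~18(i)]{DuVeZa} together with the remark that the proof there does not need completeness, and a pointer to \cite{CsNa} for the question of which open convex sets $C$ admit such a conclusion. So your write-up is a genuinely self-contained alternative, and every step checks out: convexity of both $a_{y^*}=y^*\circ F+f$ and $b_{y^*}=f-y^*\circ F$ follows from the definition of a control function applied to $\pm y^*$; the uniform bound $M'$ on $C$ is obtained correctly (local boundedness of the continuous $F$ near $z_0$, boundedness of the Lipschitz $f$ on the bounded $C$, a subgradient bound at $z_0$, and the identity $a_{y^*}+b_{y^*}=2f$ to pass from lower to upper bounds); and the decisive summation trick is sound --- separately each supporting inequality only yields $r_0\|\xi\|\le 2M'-\xi(z_0-z)$, with the linear term uncontrolled, but adding the two inequalities produces $(\xi+\eta)(z_0-z)$ with $\xi+\eta\in\partial a_{y^*}(z)+\partial b_{y^*}(z)\subset\partial(2f)(z)$, hence $\|\xi+\eta\|\le 2\Lip f$, and this is exactly where both hypotheses enter (Lipschitzness of $f$ through $\Lip f$, boundedness of $C$ through $D=\mathrm{diam}\,C$ and the fixed inner ball $B(z_0,r_0)$). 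The passage from the uniform subgradient bound $\Lambda$ to $\Lambda$-Lipschitzness of each $a_{y^*}$, $b_{y^*}$, and then of $y^*\circ F=\frac12(a_{y^*}-b_{y^*})$ and finally of $F$ via $\|F(x)-F(x')\|=\sup_{\|y^*\|\le1}|y^*\bigl(F(x)-F(x')\bigr)|$, is standard and correct. Two cosmetic remarks: choose $r_0$ \emph{after} invoking continuity of $F$ at $z_0$ (so that $F$ is bounded on $B(z_0,r_0)$ and $\overline{B(z_0,r_0)}\subset C$ simultaneously), and observe that your argument uses only Hahn--Banach separation (nonemptiness of subdifferentials of continuous convex functions at interior points), never completeness of $X$ or $Y$ --- so your proof in fact substantiates, rather than merely accepts, the paper's claim that the Banach-space hypothesis of \cite{DuVeZa} is superfluous here.
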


\begin{proof}
This was stated in \cite[Theorem~18(i)]{DuVeZa}  for $X$ and $Y$ Banach
spaces, but the proof therein works for normed linear spaces as well. 
(Note that the question for which open convex
 sets $C$ the proposition holds was answered in \cite{CsNa}.)
\end{proof}

\begin{notation}
Let $A,B,A_n,B_n$ ($n\in\N$) be subsets of a normed linear space $X$. We
shall use the notation:
\begin{itemize}
\item $A\subset\subset B$ whenever there exists $\epsilon>0$
such that $A+ B(0,\varepsilon) \subset B$;
\item $A_n\nearrow A$ whenever $A_n\subset A_{n+1}$ for each
$n\in\N$, and $\bigcup_{n\in\N}A_n=A$;
\item $A_n\nearrow\!\!\!\nearrow A$ whenever $A_n\subset\subset A_{n+1}$ for each
$n\in\N$, and $\bigcup_{n\in\N}A_n=A$.
\end{itemize}
\end{notation}

\begin{fact}\label{F:1}
Let $C\subset X$ be a nonempty convex set, and $f\colon C\to\R$  a
convex function.
\begin{enumerate}
\item[(a)] If $C$ is open and bounded, and $f$ is continuous,
           then $f$ is bounded below on $C$.
\item[(b)] If $f$ is bounded on $C$, then $f$ is Lipschitz on each $D\subset\subset C$.
\item[(c)] If $f$ is $L$-Lipschitz on $C$, then $f$ admits a convex $L$-Lipschitz
extension to the whole $X$.
\end{enumerate}
\end{fact}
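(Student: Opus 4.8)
The plan is to prove each of the three parts of Fact~\ref{F:1} directly, relying only on elementary convexity together with the notation $\subset\subset$ just introduced.

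For part~(a), I would argue by contradiction. Since $C$ is open, bounded and nonempty, pick an interior point $x_0\in C$ and a small ball $B(x_0,\rho)\subset C$. If $f$ were not bounded below, there would be a sequence $x_n\in C$ with $f(x_n)\to-\infty$. The idea is to use each $x_n$ to ``overshoot'' past $x_0$: because $C$ is bounded, the points $x_n$ stay in a bounded region, so the segment from $x_n$ through $x_0$ can be extended a definite amount beyond $x_0$ and still land inside $B(x_0,\rho)\subset C$. Concretely, for a suitable fixed $\lambda\in(0,1)$ the point $z_n:=x_0+\lambda(x_0-x_n)$ lies in $C$, and writing $x_0$ as a convex combination of $x_n$ and $z_n$ and applying convexity of $f$ forces $f(z_n)\to+\infty$, contradicting the continuity (hence local boundedness) of $f$ near $x_0$. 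The main thing to check carefully is the uniform choice of $\lambda$, which is exactly where boundedness of $C$ is used.

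For part~(b), let $D\subset\subset C$, so there is $\ep>0$ with $D+B(0,\ep)\subset C$, and suppose $|f|\le M$ on $C$. This is the standard fact that a bounded convex function is locally Lipschitz, with a quantitative constant. Given $x,y\in D$, I would set $u:=y+\ep\frac{y-x}{\|y-x\|}\in C$, so that $y$ lies on the segment $[x,u]$; applying convexity of $f$ to this collinear triple yields $f(y)-f(x)\le\frac{\|y-x\|}{\ep+\|y-x\|}\bigl(f(u)-f(x)\bigr)\le\frac{2M}{\ep}\|y-x\|$. By symmetry in $x$ and $y$ this gives the two-sided bound $|f(x)-f(y)|\le\frac{2M}{\ep}\|x-y\|$, so $f$ is Lipschitz on $D$ with constant $2M/\ep$.

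For part~(c), assuming $f$ is $L$-Lipschitz on $C$, I would write down the explicit inf-convolution (lower envelope) extension
\begin{equation*}
\tilde f(x):=\sup_{c\in C}\bigl(f(c)-L\|x-c\|\bigr),\qquad x\in X,
\end{equation*}
wait—this is a sup of concave functions and need not be convex, so instead I would use the infimal form
\begin{equation*}
\tilde f(x):=\inf_{c\in C}\bigl(f(c)+L\|x-c\|\bigr),\qquad x\in X.
\end{equation*}
Each map $c\mapsto f(c)+L\|x-c\|$ is finite, and $\tilde f$ is an infimum of the family of $L$-Lipschitz convex functions $x\mapsto f(c)+L\|x-c\|$ (indexed by $c\in C$). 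I would check three things: that $\tilde f$ is $L$-Lipschitz (immediate from the uniform Lipschitz bound of each member and stability of Lipschitz constants under infimum), that $\tilde f=f$ on $C$ (the inequality $\tilde f\le f$ is the choice $c=x$; the reverse uses the $L$-Lipschitz estimate $f(c)+L\|x-c\|\ge f(x)$ for $x,c\in C$), and that $\tilde f$ is convex. The convexity is the one genuinely nontrivial point, since an infimum of convex functions is not convex in general; here it holds because $\tilde f$ is the Pasch--Hausdorff (inf-convolution) extension, and I would verify convexity directly from the definition by estimating $\tilde f$ at a convex combination $tx_1+(1-t)x_2$ using near-optimal choices $c_1,c_2\in C$ and the convexity of both $f$ and the norm along the combination $tc_1+(1-t)c_2\in C$. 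This convexity verification is the step I expect to require the most care.
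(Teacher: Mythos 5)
Your proposal is correct overall, and parts (b) and (c) follow essentially the paper's own route: (b) is the same collinear-triple estimate the paper sketches (the paper takes $z:=y+\frac{r(y-x)}{\|y-x\|}$ with $D+B(0,2r)\subset C$), and (c) is exactly the inf-convolution extension $\widehat f(x)=\inf\bigl\{f(c)+L\|x-c\|:c\in C\bigr\}$ that the paper cites as well-known (Cobza\c{s}--Must\u{a}\c{t}a); your plan to verify convexity via near-optimal $c_1,c_2$ and the point $tc_1+(1-t)c_2\in C$ is the standard and correct way to fill in what the paper leaves unproved, and it correctly uses convexity of $C$, which is exactly why the infimum of convex functions is convex here. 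Where you genuinely diverge is (a): the paper disposes of it in one line via the Hahn--Banach theorem---a continuous convex function on an open convex set is minorized by a continuous affine function, which is bounded below on the bounded set $C$---whereas you give an elementary reflection argument: if $f(x_n)\to-\infty$, then with $\lambda:=\rho/(2\,\mathrm{diam}\,C)$ the points $z_n:=x_0+\lambda(x_0-x_n)$ stay in $B(x_0,\rho)$, and writing $x_0=\frac{\lambda}{1+\lambda}x_n+\frac{1}{1+\lambda}z_n$ forces $f(z_n)\ge(1+\lambda)f(x_0)-\lambda f(x_n)\to+\infty$, contradicting boundedness of $f$ on $B(x_0,\rho)$ (shrink $\rho$ first, using continuity at $x_0$, so that $f$ is bounded there). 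Your version avoids duality entirely and uses only continuity at a single point; the paper's buys brevity. Both uses of boundedness of $C$ are essential and correctly placed.

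One small slip in (b): with only $D+B(0,\ep)\subset C$, your point $u:=y+\ep\frac{y-x}{\|y-x\|}$ lies at distance exactly $\ep$ from $y\in D$, so it need not belong to $C$, since $B(0,\ep)$ is the open ball. Step half as far, $u:=y+\frac{\ep}{2}\cdot\frac{y-x}{\|y-x\|}$ (this is in effect what the paper does by choosing $r$ with $D+B(0,2r)\subset C$ and stepping only $r$); the Lipschitz constant becomes $4M/\ep$ and the rest of your computation goes through unchanged. In (c), you might also record explicitly that $\widehat f(x)>-\infty$ for $x\notin C$, which follows from the same Lipschitz estimate you use to show $\widehat f=f$ on $C$, applied with any fixed $c_0\in C$.
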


\begin{proof}
(a) follows from the fact that $f$ is minorized by a continuous affine
function (by the Hahn-Banach theorem).
\\
(b) can be proved in the same way as local
Lipschitz continuity of
continuous, convex functions. For the sake of completeness, we give a
sketch of proof. Let $|f|\le M$ on $C$, $r>0$ be such that
$D+ B(0,2r) \subset C$, and $x,y\in D$. Then $z:=y+\frac{r(y-x)}{\|y-x\|}\in
C$, and $y=\frac{r}{\|y-x\|+r}x+\frac{\|y-x\|}{\|y-x\|+r}z$. By convexity,
$f(y)\le\frac{r}{\|y-x\|+r}f(x)+\frac{\|y-x\|}{\|y-x\|+r}f(z)$. It
easily follows that
$f(y)-f(x)\le\frac{\|y-x\|\bigl(f(z)-f(y)\bigr)}{r}\le\frac{2M}{r}\|y-x\|$.
The rest follows by interchanging $x$ and $y$.
\\
(c) It is well-known (and easy-to-prove)
that the function $\widehat{f}\colon X\to\R$, given by
$\widehat{f}(x)=\inf\bigl\{f(c)+L\|x-c\|:c\in C\bigr\}$,
is a convex, $L$-Lipschitz extension of $f$ (cf.\ \cite{CoMu}).
\end{proof}

We shall need the following well-known and very easy fact.

\begin{fact}\label{obalky}
Let $C$ be a convex set in a normed linear space $X$, and $r>0$.
Then the sets (called ``inner parallel set'' and ``outer parallel set'' of $C$)
$$
D:=\{x\in C:\mathrm{dist}(x,X\setminus C)>r\},\ \ 
E:=\{x\in X: \mathrm{dist}(x,C)<r\}
$$
are  convex.
\end{fact}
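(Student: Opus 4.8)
The plan is to treat the two sets separately: the outer parallel set $E$ is essentially immediate, while the inner parallel set $D$ carries the real content.

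For $E$, I would first observe that $E=C+B(0,r)$, since $\dist(x,C)<r$ holds exactly when $x=c+v$ for some $c\in C$ and some $v$ with $\|v\|<r$. As a Minkowski sum of two convex sets (namely $C$ and the open ball $B(0,r)$), $E$ is convex. Alternatively, one can argue directly: for $x,y\in E$ pick $c_1,c_2\in C$ with $\|x-c_1\|<r$ and $\|y-c_2\|<r$; then for $\lambda\in[0,1]$ the point $c:=\lambda c_1+(1-\lambda)c_2$ lies in $C$, and $\|(\lambda x+(1-\lambda)y)-c\|\le\lambda\|x-c_1\|+(1-\lambda)\|y-c_2\|<r$, so $\lambda x+(1-\lambda)y\in E$.

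For $D$, the key object is the function $g(x):=\dist(x,X\setminus C)$, for which $D=\{x\in X:g(x)>r\}$ (note that $g(x)>0$ already forces $x\in C$, since any $x\notin C$ satisfies $g(x)=0$). I plan to show that $g$ is concave on its positivity set, so that the superlevel set $D$ is convex. The crucial observation is that $B(x,g(x))\subset C$ whenever $g(x)>0$: if $\|w-x\|<g(x)$, then $w$ cannot lie in $X\setminus C$, for otherwise $\|w-x\|\ge g(x)$.

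Given $x,y\in D$ with $\alpha:=g(x)$, $\beta:=g(y)$ and $m:=\lambda x+(1-\lambda)y$, I would prove that $B(m,\lambda\alpha+(1-\lambda)\beta)\subset C$, which yields $g(m)\ge\lambda\alpha+(1-\lambda)\beta>r$ and hence $m\in D$. To this end, take $w$ with $\|w-m\|<\lambda\alpha+(1-\lambda)\beta$; if $v:=w-m=0$ then $w=m\in C$ by convexity of $C$, while if $v\neq0$ I set $c:=\|v\|/(\lambda\alpha+(1-\lambda)\beta)<1$ and define $w_1:=x+(\alpha c/\|v\|)v$ and $w_2:=y+(\beta c/\|v\|)v$. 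Then $\|w_1-x\|=\alpha c<\alpha$ and $\|w_2-y\|=\beta c<\beta$, so $w_1\in B(x,\alpha)\subset C$ and $w_2\in B(y,\beta)\subset C$; moreover the coefficient identity $\lambda(\alpha c/\|v\|)+(1-\lambda)(\beta c/\|v\|)=1$ gives $\lambda w_1+(1-\lambda)w_2=m+v=w$, whence $w\in C$ by convexity. The main (and only mild) obstacle is precisely this barycentric decomposition of a point near $m$ into points near $x$ and $y$ with the correct norm bounds and the stated coefficient identity; once it is in place, convexity of $D$ follows at once, and everything else is routine.
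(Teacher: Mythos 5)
Your proof is correct, but note that the paper itself offers no argument to compare against: Fact~\ref{obalky} is stated as a ``well-known and very easy fact'' and its proof is omitted, so your write-up is judged on its own merits. The part concerning $E$ is fine: the identity $E=C+B(0,r)$ is exactly right, and either of your two arguments settles it. For $D$, all the delicate points check out: the reduction $g(x)>0\Rightarrow x\in C$ (so that $D=\{x\in X: g(x)>r\}$), the inclusion $B(x,g(x))\subset C$, and the barycentric decomposition, where indeed $\|w_1-x\|=\alpha c<\alpha$, $\|w_2-y\|=\beta c<\beta$, and $\lambda w_1+(1-\lambda)w_2=m+\frac{c}{\|v\|}\bigl(\lambda\alpha+(1-\lambda)\beta\bigr)v=m+v=w$ by the choice of $c$. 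In effect you have proved the stronger statement that $g=\dist(\cdot,X\setminus C)$ satisfies $g\bigl(\lambda x+(1-\lambda)y\bigr)\ge\lambda g(x)+(1-\lambda)g(y)$ on its positivity set, i.e.\ that the distance to the complement is concave on $C$, from which convexity of the superlevel set $D$ follows.

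It is worth knowing the shorter route that presumably underlies the authors' ``very easy'': for fixed $\rho>0$ one has $\{x: B(x,\rho)\subset C\}=\bigcap_{\|u\|<\rho}(C-u)$, an intersection of translates of the convex set $C$, hence convex; and $D=\bigcup_{\rho>r}\{x: B(x,\rho)\subset C\}$ is an increasing union (as $\rho\downarrow r$) of such convex sets, hence convex. This two-line argument replaces your explicit decomposition, which is the only step in your proof carrying real content; what your version buys in exchange is the quantitative concavity inequality for $g$, which is a genuinely stronger conclusion than the fact requires.
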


\begin{observation}\label{O:lip}
Let $X,Y$ be normed linear spaces, $C\subset X$ a convex set,
and $F\colon C\to Y$ a d.c.\ mapping with a bounded above
control function $f$. Then both $F$ and $f$ are Lipschitz on each
bounded convex set $B\cc C$.
\end{observation}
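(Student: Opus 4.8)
The plan is to reduce both assertions to the facts already established, the only genuine difficulty being that the control function $f$ is assumed merely bounded above, whereas Fact~\ref{F:1}(b) requires two-sided boundedness. I would first upgrade ``bounded above'' to ``bounded'' by passing to a suitable open bounded convex superset of $B$, on which Fact~\ref{F:1}(a) automatically supplies a lower bound; this is the step where the hypotheses are really used, and it is where I expect the main (modest) obstacle to lie.

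Concretely, I would fix the bounded convex set $B\cc C$ and choose $\epsilon>0$ with $B+B(0,\epsilon)\subset C$. Then I set $U:=B+B(0,\epsilon/2)$ and $W:=B+B(0,\epsilon/4)$. Both sets are open (being sums with an open ball), convex (Minkowski sums of convex sets), and bounded (since $B$ is bounded), and one checks directly from the definition of $\cc$ that $B\subset W\cc U\cc C$ (indeed $W+B(0,\epsilon/4)=U$ and $U+B(0,\epsilon/2)\subset C$). Since $f$ is bounded above on $C$, it is bounded above on $U$; and since $U$ is open, bounded and convex while $f$ is continuous and convex, Fact~\ref{F:1}(a) gives that $f$ is bounded below on $U$ as well. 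Hence $f$ is bounded on $U$.

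Now Fact~\ref{F:1}(b), applied with $U$ in the role of the domain, yields that $f$ is Lipschitz on every subset $\cc U$; in particular $f$ is Lipschitz on $W$, and therefore on $B\subset W$. This settles the claim for $f$. For $F$ I would then invoke Proposition~\ref{P:lip}: the restriction of $F$ to $W$ is d.c.\ on $W$ with the control function $f|_W$, which we have just shown to be Lipschitz, and $W$ is a bounded open convex set. Proposition~\ref{P:lip} therefore gives that $F$ is Lipschitz on $W$, hence on $B$.

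The one point that must be handled with care is the order and role of the two enlargements of $B$. One needs $U$ to be \emph{open} in order to invoke Fact~\ref{F:1}(a) for the lower bound, and one needs the strictly smaller \emph{open} set $W$ (rather than $B$ itself, which need not be open) as the domain to which Proposition~\ref{P:lip} applies when proving the Lipschitz property of $F$. Everything else is routine verification of the relations $\cc$ between the nested sets.
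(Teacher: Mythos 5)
Your proof is correct and follows essentially the same route as the paper: the paper also sandwiches $B$ between two open bounded convex sets $D\cc E$ inside $C$ (obtained via the parallel sets of Fact~\ref{obalky}, which coincide with your Minkowski sums $B+B(0,r)$), uses Fact~\ref{F:1}(a) to upgrade ``bounded above'' to ``bounded'' on the larger set, Fact~\ref{F:1}(b) for the Lipschitz property of $f$, and Proposition~\ref{P:lip} for that of $F$. Your explicit verification of the $\cc$ relations and of the role of the two enlargements matches the paper's argument in all essentials.
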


\begin{proof}
By Fact~\ref{obalky},
there exist open, bounded, convex sets
$D$ and $E$ such that $B\subset D\cc E\subset C$. By Fact~\ref{F:1}(a),
$f$ is bounded on $E$. Hence $f$ is Lipschitz on $D$ by
Fact~\ref{F:1}(b), and $F$ is Lipschitz on $D$ by
Proposition~\ref{P:lip}.
\end{proof}

\begin{definition}
A normed linear space $X$ is said to have {\em modulus of convexity of power
type~2} if there exists $a>0$ such that $\delta_X(\epsilon)\ge
a\epsilon^2$ for each $\epsilon\in(0,2]$ (where $\delta_X$ denotes the
classical modulus of convexity of $X$; see, e.g., \cite{day} for the
definition).
\end{definition}

\begin{fact}\label{F:powertype}\
\begin{enumerate}
\item[(a)] The $\ell_2$-direct sum $(X\oplus Y)_{\ell_2}$ has modulus of convexity of power
type 2 whenever both $X$ and $Y$ do.
\item[(b)] All $L_p(\mu)$ spaces with
$1<p\le2$  ($\mu$ arbitrary nonnegative measure) have modulus of
convexity of power type 2 (in their canonical norms).
\end{enumerate}
\end{fact}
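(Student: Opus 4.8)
The plan is to replace the modulus of convexity by an equivalent \emph{quadratic inequality}. For a normed linear space $Z$, having modulus of convexity of power type~2 is equivalent to the existence of a constant $K>0$ such that
\[
\frac{\|\xi+\eta\|^2+\|\xi-\eta\|^2}{2}\ \ge\ \|\xi\|^2+K\,\|\eta\|^2\qquad\text{for all }\xi,\eta\in Z;
\]
I will call this inequality $(\ast)$. The implication $(\ast)\Rightarrow$ power type~2 is elementary and I would give it inline: if $p,q$ are unit vectors with $\|p-q\|\ge\epsilon$, then applying $(\ast)$ to $\xi=\tfrac{p+q}{2}$, $\eta=\tfrac{p-q}{2}$ (so that $\xi+\eta=p$ and $\xi-\eta=q$ are unit) gives $1\ge\bigl\|\tfrac{p+q}{2}\bigr\|^2+\tfrac{K}{4}\epsilon^2$, whence $\bigl\|\tfrac{p+q}{2}\bigr\|\le\sqrt{1-\tfrac{K}{4}\epsilon^2}\le1-\tfrac{K}{8}\epsilon^2$ and therefore $\delta_Z(\epsilon)\ge\tfrac{K}{8}\epsilon^2$. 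The converse implication, from power type~2 back to $(\ast)$ \emph{for the same norm}, is the substantial part (it extends the sphere estimate to all $\xi,\eta$ by a homogeneity argument); I would quote it from the literature, e.g.\ from Figiel's work on moduli of convexity (cf.\ also Ball, Carlen and Lieb).

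Granting $(\ast)$, part~(a) is immediate. Suppose $X$ and $Y$ satisfy $(\ast)$ with constants $K_X$ and $K_Y$, and put $Z=(X\oplus Y)_{\ell_2}$, so that $\|(x,y)\|^2=\|x\|^2+\|y\|^2$. For $\xi=(x_1,y_1)$ and $\eta=(x_2,y_2)$ the left-hand side of $(\ast)$ splits coordinatewise, and applying $(\ast)$ in each of $X$ and $Y$ yields
\begin{align*}
\frac{\|\xi+\eta\|^2+\|\xi-\eta\|^2}{2}
&=\frac{\|x_1+x_2\|^2+\|x_1-x_2\|^2}{2}+\frac{\|y_1+y_2\|^2+\|y_1-y_2\|^2}{2}\\
&\ge \|x_1\|^2+K_X\|x_2\|^2+\|y_1\|^2+K_Y\|y_2\|^2\\
&\ge \|\xi\|^2+\min(K_X,K_Y)\,\|\eta\|^2.
\end{align*}
Thus $Z$ satisfies $(\ast)$ with $K=\min(K_X,K_Y)$, and by the elementary implication above it has modulus of convexity of power type~2. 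Note that here I use the nontrivial direction (power type~2 $\Rightarrow(\ast)$) for the summands $X,Y$, and only the easy direction for the sum.

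For part~(b) I would invoke the sharp $2$-uniform convexity inequality for $L_p(\mu)$ with $1<p\le2$,
\[
\frac{\|f+g\|_p^2+\|f-g\|_p^2}{2}\ \ge\ \|f\|_p^2+(p-1)\,\|g\|_p^2\qquad(f,g\in L_p(\mu)),
\]
due to Ball, Carlen and Lieb and rooted in the inequalities of Hanner and Clarkson. This is precisely $(\ast)$ with $K=p-1>0$, so part~(b) follows at once from the elementary implication. The main obstacle in a fully self-contained treatment is exactly this $L_p$ inequality: its standard proof reduces, by integration, to a pointwise scalar estimate, but that estimate is delicate, and I would quote the inequality rather than reprove it. Given it (and the equivalence underlying $(\ast)$), everything else is the routine algebra shown above.
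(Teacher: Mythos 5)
Your proposal is correct and takes essentially the same route as the paper: your inequality $(\ast)$ is exactly Bynum's weak parallelogram law---the characterization the paper quotes for part~(a)---after the substitution $x=\xi+\eta$, $y=\xi-\eta$ (up to a factor $4$), and your coordinatewise computation with constant $\min(K_X,K_Y)$ is precisely the ``follows immediately'' step the paper leaves implicit. For part~(b) you quote the Ball--Carlen--Lieb form of $2$-uniform convexity of $L_p(\mu)$ with the explicit constant $p-1$ where the paper cites Hanner, but in both cases the substance is the same appeal to the known $L_p$ inequality, so the arguments coincide in all essentials.
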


\begin{proof}
(a) follows immediately from the following result by Bynum
\cite{bynum}: $X$ has modulus of convexity of power
type 2 if and only if there exists $b>0$ such that
$2\|x\|^2+2\|y\|^2\ge\|x+y\|^2+b\|x-y\|^2$ for each $x,y\in X$.\\
(b) is due to Hanner \cite{hanner}.
\end{proof}

Let $X,Y$ be normed linear spaces, and $A\subset X$
 an open set. Recall that a mapping $F\colon A\to Y$ is said to be
$C^{1,1}$ on $A$ if its
Fr\'echet
derivative $F'(x)$ exists at each point $x\in A$ and 
$F'\colon A\to\mathcal{L}(X,Y)$ is
Lipschitz. 

The next proposition follows from the proof of the implication $(i)\!\Rightarrow\!(ii)$
in \cite[Theorem~11]{DuVeZa}.

\begin{proposition}\label{C11}
Let $X,Y$ be normed linear spaces,
$A\subset X$ an open convex set, $F\colon A\to Y$  a $C^{1,1}$
mapping. If $X$ admits an equivalent norm $|\cdot|$ with modulus of convexity
of power type 2,  then $F$ is
d.c.\ on $A$ with a control function of the form
$f(x)=c|\cdot|^2$  for some $c>0$.
\end{proposition}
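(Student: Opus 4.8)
The plan is to verify the definition of delta-convexity directly: I must produce a constant $c>0$ so that, for every $y^*\in Y^*$ with $\|y^*\|\le1$, the function $y^*\circ F+c|\cdot|^2$ is convex on $A$. Since $|\cdot|^2$ is plainly a continuous convex function, this is precisely what is needed for $f=c|\cdot|^2$ to control $F$. The two ingredients I would combine are: (a) a $C^{1,1}$ function is ``almost convex'' up to a quadratic error governed by the Lipschitz constant of its derivative, and (b) the square of a norm with modulus of convexity of power type $2$ is \emph{strongly} convex, which supplies a compensating quadratic term.

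First I would record the analytic content of the hypothesis. By Bynum's characterization quoted in Fact~\ref{F:powertype}, there is $b>0$ with $2|x|^2+2|y|^2\ge|x+y|^2+b|x-y|^2$ for all $x,y\in X$. Writing $G:=|\cdot|^2$, this is exactly midpoint strong convexity of $G$, and a standard bootstrapping argument for continuous functions (reduce to the one–variable function $t\mapsto G((1-t)y+tx)$ and check that adding the concave quadratic $b|x-y|^2\,t(1-t)$ restores midpoint convexity, hence convexity) upgrades it to
$$G\bigl((1-t)y+tx\bigr)\le(1-t)G(y)+tG(x)-b\,t(1-t)|x-y|^2,\qquad t\in[0,1].$$

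Next I would treat the $C^{1,1}$ part. Fix $y^*$ with $\|y^*\|\le1$ and put $h:=y^*\circ F$. Then $h$ is $C^{1,1}$ with $h'=y^*\circ F'$, and since $\|y^*\|\le1$ the derivative $h'$ is $L$-Lipschitz with respect to $|\cdot|$, where $L:=\Lip_{|\cdot|}F'$ is finite (the original Lipschitzness of $F'$ passes to the equivalent norm) and, crucially, \emph{independent} of $y^*$. Restricting to a segment and integrating the Lipschitz bound on $h'$ (equivalently, noting that $t\mapsto h((1-t)y+tx)+\tfrac12 L|x-y|^2t^2$ is convex) gives the companion estimate
$$h\bigl((1-t)y+tx\bigr)\le(1-t)h(y)+th(x)+\tfrac{1}{2}L\,t(1-t)|x-y|^2 .$$
Adding $c$ times the displayed inequality for $G$ to this one, the quadratic remainders combine to $\bigl(\tfrac{1}{2}L-cb\bigr)t(1-t)|x-y|^2$; choosing $c:=L/(2b)$ renders this nonpositive, so $h+cG=y^*\circ F+c|\cdot|^2$ is convex. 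As $c$ does not depend on $y^*$, the continuous convex function $f=c|\cdot|^2$ controls $F$, proving the proposition.

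The only delicate point is the passage in the second paragraph from Bynum's midpoint inequality to the strong-convexity inequality valid for all $t\in[0,1]$; everything else is routine quadratic bookkeeping. The one feature that must be watched is the uniformity of $L$ in $y^*$, which is exactly what allows a single control function to work simultaneously for all $y^*\in B_{Y^*}$.
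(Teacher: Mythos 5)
Your proof is correct and follows essentially the same route as the paper, which writes out no argument of its own but refers to the proof of (i)$\Rightarrow$(ii) in \cite[Theorem~11]{DuVeZa}; that proof rests on exactly the two ingredients you use, namely Bynum's inequality $2|x|^2+2|y|^2\ge|x+y|^2+b|x-y|^2$ (making $|\cdot|^2$ strongly convex along segments) and the semiconcavity estimate for a mapping with Lipschitz derivative, combined with the same uniform choice $c=L/(2b)$ independent of $y^*\in B_{Y^*}$. The two points you flag as delicate are indeed handled correctly: the passage from the midpoint inequality to the inequality with remainder $b\,t(1-t)|x-y|^2$ works by the standard continuity argument you sketch, and the bound $\Lip(y^*\circ F')\le\Lip F'$ gives the required uniformity in $y^*$.
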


\section{A consequence of Hartman's construction}

Hartman's construction \cite{H}, which gives the proof that
locally d.c.\ functions
in $\R^n$ are d.c., has some consequences also in infinite dimensional
spaces. It was observed (independently) already in \cite{PB} and
\cite{KM} (cf.\ Remark~\ref{obec}). The main new observation of the
present article is that Hartman's construction gives even a
characterization of d.c.\ mappings on open sets (Proposition~\ref{P:HKM}) which
(together with Proposition~\ref{P:veza}) implies some
infinite dimensional versions of Hartman's superposition theorem.
First we formulate a lemma which describes Hartman's construction in a general setting.

\begin{lemma}\label{L:hartman}
Let $X,Y$ be normed linear spaces, $C\subset X$ a nonempty
convex set, and  $F\colon C\to Y$ a mapping. Let $\emptyset \neq D_n \subset C$
($n\in\N$)
be convex sets such that $D_n\sipka C$ and, for each $n$, 
$\mathrm{dist}(D_n,C\setminus D_{n+1})>0$, 
$D_n$ is relatively open in $C$,  
and
$F|_{D_n}$ is d.c.\ with a
control function  $\gamma_n\colon D_n\to \R$
which is either bounded or Lipschitz. Then $F$ is d.c.\ on $C$.
\end{lemma}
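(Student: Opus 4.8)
The plan is to "glue" the local control functions $\gamma_n$ into a single control function on all of $C$ by summing suitably truncated and scaled modifications, exactly in the spirit of Hartman's original telescoping construction. The key idea is that on the "annular" region $D_{n+1}\setminus D_{n-1}$ we have access to a control function, and we want to combine these so that near any fixed point of $C$ only finitely many summands are active, while the total sum still controls $F$ everywhere.

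First I would normalize the control functions. Since each $\gamma_n$ is either bounded or Lipschitz on $D_n$, and since the nesting condition $\mathrm{dist}(D_n,C\setminus D_{n+1})>0$ gives us room to work, I would replace $\gamma_n$ by a nonnegative convex control function that is well-behaved on the slightly smaller set $D_{n-1}$. Concretely, on $D_{n-1}$ the function $\gamma_n$ can be taken bounded and hence Lipschitz (using Fact~\ref{F:1}(b), since $D_{n-1}\cc D_n$ in the relative sense coming from the positive distance condition), and by adding a constant I may assume $\gamma_n\ge 0$. The goal is then to build convex functions $\psi_n$ on $C$, each controlling $F$ on $D_{n-1}$, such that $f:=\sum_n \psi_n$ converges locally and controls $F$ on all of $C$.

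The main construction is a telescoping/partition argument. For each $n$ I would choose a continuous convex function $\varphi_n\colon C\to\R$ that equals (a scaled copy of) $\gamma_{n+1}$ on $D_n$ but is "switched off" outside $D_{n+1}$, using the positive distance $\mathrm{dist}(D_n,C\setminus D_{n+1})>0$ to interpolate convexly — for instance via a max with an affine-type bridge, or by the standard Hartman trick of taking $\varphi_n = \max\{0,\, \gamma_{n+1}+L_n(\text{distance-based affine minorant})\}$, chosen large enough near the boundary of $D_{n+1}$ so that the pointwise maximum of $\varphi_n$ with the running partial sum stays convex and dominates the required convexity defect of $y^*\circ F$. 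Summing these with rapidly decreasing weights $c_n>0$ guarantees that on any $D_m$ only the terms with $n\ge m-1$ fail to be already accounted for, and a geometric choice of $c_n$ forces uniform convergence of the tail on each $D_m$. Then $f=\sum_n c_n\varphi_n$ is convex and continuous on $C$, and for any $y^*$ with $\|y^*\|\le 1$ the function $y^*\circ F + f$ is convex on each $D_m$, hence on $C=\bigcup_m D_m$ by the relative openness of the $D_m$.

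The hard part will be ensuring that the switched-off extensions $\varphi_n$ remain convex on all of $C$ while genuinely controlling $F$ on $D_{n}$ \emph{and} vanishing (or being affine) outside a neighbourhood of $D_{n+1}$; reconciling "convex" with "compactly supported convexity defect" is precisely where the positive-distance hypothesis $\mathrm{dist}(D_n,C\setminus D_{n+1})>0$ must be used, and it is the delicate point that convexity is not preserved under naive truncation. I expect to handle this by working with the extended Lipschitz controls (Fact~\ref{F:1}(c)) and taking maxima with steep affine functions supported by the gap between $D_n$ and $C\setminus D_{n+1}$, so that each $\varphi_n$ is a maximum of finitely many convex functions and hence convex, while the steepness guarantees domination. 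The local finiteness of the effective sum and the relative openness of the $D_n$ (which lets convexity on each piece propagate to convexity on $C$) are the two structural facts that make the gluing go through.
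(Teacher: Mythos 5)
Your overall strategy---steep distance-function bridges across the gaps $\dist(D_n,C\setminus D_{n+1})>0$, normalization to nonnegative controls, and local-to-global convexity via the relative openness of the $D_n$---is indeed Hartman's, and these ingredients all appear in the paper's proof. But your gluing mechanism, an infinite weighted sum $f=\sum_n c_n\varphi_n$ with rapidly decreasing $c_n$, has a genuine gap: \emph{control functions do not survive scaling down}. If $\gamma$ controls $F$, then $c\gamma$ with $0<c<1$ in general does not ($y^*\circ F+c\gamma$ need not be convex), so the decreasing weights destroy exactly the property the $n$-th summand must supply on $D_{n+1}\setminus D_n$; and if you compensate by pre-scaling $\gamma_{n+1}$ by $1/c_n$ inside $\varphi_n$, the weight cancels and you are left with an unweighted sum whose $n$-th term is at least $\gamma_{n+1}$ on all of $D_n$---while the hypotheses give no bound whatsoever on $\sup_{D_1}\gamma_{n+1}$ as $n\to\infty$, so convergence cannot be forced even at a single point. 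A second obstruction: the ``switched-off'' shape you want (equal to a positive control on $D_n$, vanishing outside $D_{n+1}$) is impossible for a convex function---if a convex $\varphi\ge0$ is positive at some $x\in D_n$ and a segment through $x$ has both endpoints in $C\setminus D_{n+1}$, then $\varphi(x)\le\max$ of the endpoint values $=0$. Convexity permits only the opposite orientation: vanish on the \emph{inner} set and grow steeply outside it. (A smaller issue: your appeal to Fact~\ref{F:1}(b) presupposes $D_{n-1}\cc D_n$ in the ambient sense, which fails when $C$ has empty interior, since the $D_n$ are only \emph{relatively} open; the paper sidesteps this by first reducing to bounded $D_n$ and using only boundedness, never Lipschitzness, of the $\gamma_n$.)

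The paper resolves both tensions without any infinite sum. It first extends each control across the gap by setting $h_n=\max\{\gamma_{n+1},\varphi_n\}$ on $D_{n+1}$ and $h_n=\varphi_n$ elsewhere, where $\varphi_n=\frac{b_n+1}{d_n}\dist(\cdot,D_n)$; then it iterates maxima: $f_{n+1}=\max\{f_n,g_n\}$ with $g_n=h_{n+2}-\sigma+\frac{\sigma+s+1}{d_n}\dist(\cdot,D_n)$ shifted so that $g_n\le0\le f_n$ on $D_n$ (whence $f_{n+1}=f_n$ there, so the sequence is \emph{locally eventually constant} and $f=\lim f_n$ exists with no convergence estimates at all) and steep enough that $g_n>f_n$ on $D_{n+2}\setminus D_{n+1}$. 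That strict domination handles precisely the point your sketch leaves untouched: convexity of $y^*\circ F+\max\{f_n,g_n\}$ is not automatic where the maximum switches branches, and the paper verifies it by a two-case argument---on $D_{n+1}$ both $f_n$ and $g_n$ control $F$, so both branches of the maximum are convex, while on $D_{n+2}\setminus D_{n+1}$ the function $f_{n+1}$ locally equals the single control $g_n$. Any correct completion of your sum would have to replace weighted convergence by such locally finite terms vanishing identically on the inner sets together with a branch-switching analysis, which is in essence a reformulation of the paper's construction.
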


\begin{proof}
First, fix  $a\in D_1$, and observe
that the bounded sets $\tilde{D}_n:=D_n\cap B(a,n)$ satisfy the same
assumptions as the sets $D_n$. Thus we can (and do) suppose that each
$D_n$ is bounded, and hence each $\gamma _n$ is bounded on $D_n$.
 Adding a constant to $\gamma_n$ if necessary, we can suppose that $0< \gamma_n(x) < b_n < \infty$ for
 each $n \in \N$ and $x \in D_n$. 

For each $n \in \N$, choose $0 < d_n < \mathrm{dist}(D_n,C\setminus D_{n+1})$, and
 consider the Lipschitz convex functions  
$\vf_n(x) := \frac{b_n+1}{d_n} \, \dist(x,D_n)$ on $C$. Define
 $$ h_n(x):= \max\{\gamma_{n+1}(x), \vf_n(x)\},\   x \in  D_{n+1},\ \ \text{and}\ \ 
 h_n(x):= \vf_n(x),\   x\in C \setminus D_{n+1}.$$
 If $z \in D_{n+1}$, then there exists $\ep>0$ such that $h_n(x)= \max\{\gamma_{n+1}(x), \vf_n(x)\}$ for
  $x \in C \cap B(z,\ep)$, since $D_{n+1}$ is open in $C$. If $z \in C \setminus D_{n+1}$, then 
$\mathrm{dist}(x,D_n)\ge d_n$ and therefore
there exists
$\ep>0$ such that   $\vf_n(x) > b_n$, and thus  $h_n(x)=  \vf_n(x)$, for each
  $x \in C \cap B(z,\ep)$.
  Therefore, $h_n$ is continuous and convex on $C$. Moreover, clearly
\begin{itemize}
\item  $h_n \geq 0$, and $h_n$ is bounded on each bounded subset of $C$.
\end{itemize}
Since $\vf_n(x) =  0 $  for  $x \in D_n$, we see that 
$h_n(x) = \gamma_{n+1}(x)$ for $x \in D_n$. So,
\begin{itemize}
\item $h_n$ is a control function of $F$ on $D_n$.
\end{itemize}

Let us define, by induction, a sequence $\{f_n\}$
of continuous convex  functions on
$C$ such that:
\begin{enumerate}
\item[(a)] $f_n$ is bounded on bounded subsets of $C$,
\item[(b)] $f_n\ge0$,
\item[(c)] $f_n$ controls $F$ on $D_{n+1}$, and 
\item[(d)] $f_{n+1}=f_n$ on $D_n$.
\end{enumerate}
Put $f_1:=h_2$. Suppose we already have $f_1,\ldots,f_n$. Set
\[
s:=\sup f_n(D_{n+2})\,,\qquad \sigma:=\sup h_{n+2}(D_n)\, \ \ \text{and}
\]
\[
g_n(x):=h_{n+2}(x)-\sigma + \frac{\sigma+ s+1}{d_n}\dist(x,D_n)
\ \ \ \text{for}\ \ x\in C.
\]
Then clearly $g_n$ is continuous and convex on C, and it controls $F$ on $D_{n+2}$.
Define $f_{n+1}=\max\{f_n,g_n\}$. Clearly $f_{n+1}$ is continuous convex,  $f_{n+1}\ge f_{n}\ge0$,
and $f_{n+1}$ is bounded on bounded subsets of $C$. If $x \in D_n$, then $g_n(x)\le 0\le f_n(x)$, consequently
 $f_{n+1}=f_n$ on $D_n$.

Let us show that $f_{n+1}$ controls $F$ on
$D_{n+2}$; i.e., that the function
$\varphi_{y^*}:=y^*\circ F+f_{n+1}=\max\{y^*\circ F+f_n,y^*\circ F+g_n\}$ is
continuous and convex on $D_{n+2}$ for each  $y^*\in B_{X^*}$.
 To this end, fix $y^*\in B_{X^*}$ and  $z \in D_{n+2}$. 
If $z \in D_{n+1}$, then there is $\ep>0$
  such that $\varphi_{y^*}$ is continuous and convex on $B(z,\ep) \cap C$ (since $D_{n+1}$ is open in $C$ and both
   $f_n$ and $g_n$ control $F$ on $D_{n+1}$). If $z \in D_{n+2} \setminus D_{n+1}$, then $\dist(z, D_n) \geq d_n$, and
    consequently $g_n(x)\geq 0-\sigma+(\sigma+s+1) > f_n(x)$. Therefore there exists $\ep>0$
  such that $U:=B(z,\ep) \cap C\subset D_{n+2}$ and     $\varphi_{y^*}$ equals to the continuous convex function
  $y ^*\circ F+g_n$  on  $U$. Hence we can conclude that  $\varphi_{y^*}$  is
continuous and convex on $D_{n+2}$.

Now, for each $x\in C$, the sequence $\bigl\{f_n(x)\bigr\}$ is constant
for large $n$'s, hence
$f(x):=\lim_{n\to\infty} f_n(x)$ is well defined on $C$. 
Since $f=f_n$ on $D_n$, (c) easily implies that
$f$ is a continuous convex function, which controls $F$ on $C$.
\end{proof}

\begin{remark}\label{zahl}
The assumptions of Lemma \ref{L:hartman} allow the possibility 
that $D_n = D_{n+1}= \dots=C$ for some $n$.
 \end{remark}

\begin{lemma}\label{L:dn}
Let X be a normed linear space and
let $C\subset X$ be nonempty, open and convex. Let $\{C_n\}$ be a sequence of
convex sets with nonempty interior, such that $C_n\sipka C$.
Then there exists a sequence $\{D_n\}$ of nonempty
bounded open
convex sets such that $D_n\sipky C$, and $D_n\subset\subset C_n$ for each $n$.
\end{lemma}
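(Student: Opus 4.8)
The plan is to build the $D_n$ as bounded truncations of homothetic shrinkings of the $C_n$ towards a fixed common interior point; the homothety creates the ``room'' needed for the strict inclusions $\cc$, while the truncation by balls produces boundedness, and both devices are arranged so that nothing of $C$ is lost in the limit.

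First I would fix $a\in X$ and $r_0>0$ with $B(a,r_0)\subset C_1$, which is possible since $C_1$ has nonempty interior; then $B(a,r_0)\subset C_1\subset C_n$ for every $n$. It is convenient to replace each $C_n$ by its interior: the sets $\mathrm{int}\,C_n$ are open, convex, nonempty and increasing, and $\bigcup_n\mathrm{int}\,C_n=C$. Indeed, for $x\in C\setminus\{a\}$ the openness of $C$ gives an $s>1$ with $w:=a+s(x-a)\in C$, hence $w\in C_m$ for some $m$; writing $x=(1-\tfrac1s)a+\tfrac1s w$ as a convex combination of $a\in\mathrm{int}\,C_m$ and $w\in C_m$ with positive weight on $a$ yields $x\in\mathrm{int}\,C_m$ (and $a\in\mathrm{int}\,C_1$ trivially). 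Since $D_n\cc\mathrm{int}\,C_n$ implies $D_n\cc C_n$, I may and do assume from now on that each $C_n$ is open.

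Next I denote by $H_\lambda(z):=a+\lambda(z-a)$ the homothety with center $a$ and ratio $\lambda\in(0,1)$, fix a strictly increasing sequence $\lambda_n\nearrow1$ in $(0,1)$ (say $\lambda_n=n/(n+1)$) and a strictly increasing sequence $R_n\nearrow\infty$ (say $R_n=n$), and put
\[
D_n:=H_{\lambda_n}(C_n)\cap B(a,R_n).
\]
Each $D_n$ is convex (an affine image of a convex set intersected with a ball), open (both factors are open, as $C_n$ is now open), bounded (contained in $B(a,R_n)$) and nonempty (it contains $a=H_{\lambda_n}(a)$). All three required properties then rest on one elementary observation: if $\mu\in(0,1)$ and $\|w\|<(1-\mu)r_0$, then for every $z\in C_n$ one has $\mu z+(1-\mu)a+w=\mu z+(1-\mu)\bigl(a+\tfrac{w}{1-\mu}\bigr)\in C_n$, because $a+\tfrac{w}{1-\mu}\in B(a,r_0)\subset C_n$ and $C_n$ is convex. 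Applying this with $\mu=\lambda_n$ gives $H_{\lambda_n}(C_n)+B(0,(1-\lambda_n)r_0)\subset C_n$, hence $D_n\cc C_n$. Applying it with $\mu=\lambda_n/\lambda_{n+1}$ to points of $H_{\lambda_n}(C_n)\subset H_{\lambda_n}(C_{n+1})$ shows that $H_{\lambda_n}(C_{n+1})+B(0,(\lambda_{n+1}-\lambda_n)r_0)\subset H_{\lambda_{n+1}}(C_{n+1})$; combined with $B(a,R_n)+B(0,R_{n+1}-R_n)\subset B(a,R_{n+1})$, intersecting yields $D_n\cc D_{n+1}$ with $\ep=\min\{(\lambda_{n+1}-\lambda_n)r_0,\,R_{n+1}-R_n\}$. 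Finally, for $x\in C$ pick $m$ with $x\in C_m$; since $\|H_{\lambda_n}^{-1}(x)-x\|=\|x-a\|\bigl(\tfrac1{\lambda_n}-1\bigr)\to0$ and $C_m$ is open, $H_{\lambda_n}^{-1}(x)\in C_m\subset C_n$ for all large $n$, while $\|x-a\|<R_n$ eventually, so $x\in D_n$ for large $n$ and $\bigcup_nD_n=C$.

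The only genuinely delicate point is to secure $D_n\cc D_{n+1}$ and $\bigcup_nD_n=C$ at the same time: the ratios must increase strictly, so as to open a positive gap between consecutive shrunken sets, yet tend to $1$, and the truncating radii must likewise increase yet tend to infinity, so that the shrinking and the truncation both disappear in the limit. Once the homothety center $a$ is pinned inside all the $C_n$ (guaranteed by $B(a,r_0)\subset C_1$), these demands are compatible, and the single displayed ball inclusion renders all three verifications routine.
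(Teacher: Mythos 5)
Your proof is correct, but it uses a genuinely different device than the paper for manufacturing the gaps. Both arguments share the same skeleton: you each first prove that $C=\bigcup_n\mathrm{int}\,C_n$ (by essentially the same segment-extension argument, pushing a point of $C$ slightly past itself along a line through an interior point), and you each use balls to truncate for boundedness. The paper then takes for $D_n$ the inner parallel sets $\{x\in C_n:\dist(x,X\setminus C_n)>\delta/n\}$ of the (pre-truncated) sets $C_n\cap B(x_0,n)$; their convexity is Fact~\ref{obalky}, and the relations $D_n\cc C_n$ and $D_n\cc D_{n+1}$ follow from the $1$-Lipschitz property of the distance function, with openness coming for free from $\dist(\cdot,X\setminus C_n)>0$ (so the paper never needs to replace $C_n$ by its interior in the construction itself). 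You instead pass to $\mathrm{int}\,C_n$ and shrink homothetically toward a common interior point $a$ with ratios $\lambda_n\nearrow 1$, truncating by $B(a,R_n)$; all three inclusions then reduce to the single Minkowski-sum estimate $\mu z+(1-\mu)a+w\in C_n$ for $\|w\|<(1-\mu)r_0$, which exploits $B(a,r_0)\subset C_n$. (Your two-line deduction of $H_{\lambda_n}(C_{n+1})+B(0,(\lambda_{n+1}-\lambda_n)r_0)\subset H_{\lambda_{n+1}}(C_{n+1})$ compresses a small computation --- one applies the observation inside $C_{n+1}$ with $\mu=\lambda_n/\lambda_{n+1}$ and then maps by $H_{\lambda_{n+1}}$, the radius rescaling by the factor $\lambda_{n+1}$ --- but the stated inclusion is exactly right.) What each approach buys: the paper's parallel-set construction is shorter once Fact~\ref{obalky} is on hand and needs no reduction to open $C_n$; your homothety construction avoids distance functions and the parallel-set fact entirely, produces explicit witnesses $\varepsilon=(1-\lambda_n)r_0$ and $\min\{(\lambda_{n+1}-\lambda_n)r_0,\,R_{n+1}-R_n\}$ for the $\cc$ relations, and makes openness and convexity of $D_n$ immediate. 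Note also that your proof only invokes nonemptiness of $\mathrm{int}\,C_1$ (which, by monotonicity, is all the hypothesis really provides anyway).
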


\begin{proof}
We can (and do) suppose that each $C_n$ is bounded. (If this is not the case,
replace, for each $n$, the set $C_n$ with the set $C_n\cap B(x_0,n)$
where $x_0$ is an arbitrary interior point of $C_1$.)
\\
First we claim that $C=\bigcup_n\mathrm{int}\, C_n$. Indeed, let $x\in C$
be any point. Then $x\in C_n$ for some $n$. If $x\notin\mathrm{int}\, C_n$,
choose any $y\in\mathrm{int}\, C_n$. There exists $z\in C$ such that
$x\in(y,z)$ (i.e., $x$ is a relative interior point of the segment
$[y,z]$). There exists $k>n$ such that $z\in C_k$. Then
$x\in\mathrm{int}\, C_k$, since $y\in\mathrm{int}\, C_k$.
\\
Now, fix $\delta>0$ such that $C_1$ contains an open ball of radius $2\delta$, and
define
$$D_n:=\{x\in C_n : \dist(x, X\setminus C_n)>\delta/n\}.$$
Obviously $D_n\subset\subset C_n$ for each $n$, and the sets $D_n$ are
nonempty, open and (by Fact~\ref{obalky}) convex. Moreover
\[
D_n\subset\subset
\{x\in C_n: \dist(x, X\setminus C_n)>\delta/(n+1)\}\subset
D_{n+1}.
\]
To finish the proof, fix $x\in C$. Then $x\in\mathrm{int}\, C_n$ for some $n$.
Fix $k>n$ such that
$\dist(x,X\setminus C_n)>\delta/k$. Then
$\dist(x,X\setminus C_k)\ge\dist(x,X\setminus C_n)>\delta/k$
which means that $x$ belongs to $D_k$.
\end{proof}

Now, we are ready to state the main result of this section.

\begin{proposition}\label{P:HKM}
Let $X,Y$ be normed linear spaces, $C\subset X$ a nonempty open
convex set, and $F\colon C\to Y$ a mapping. Then the following
assertions are equivalent:
\begin{enumerate}
\item[(i)]
$F$ is d.c.\ on $C$;
\item[(ii)]
there exists a sequence $\{C_n\}$ of convex sets with nonempty interior
such that $C_n\sipka C$ and, for each $n$, $F|_{C_n}$ is d.c.\ with a control
function that is bounded from above on $C_n$;
\item[(iii)]
there exists a sequence $\{D_n\}$ of bounded open convex sets such that
$D_n\sipky C$ and, for each $n$, $F|_{D_n}$ is Lipschitz and
d.c.\ with a Lipschitz control function
on $D_n$.
\end{enumerate}
\end{proposition}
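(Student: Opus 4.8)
The plan is to prove the three assertions equivalent by establishing the implications $(i)\Rightarrow(iii)\Rightarrow(ii)\Rightarrow(i)$, so that the heavy machinery (Lemma~\ref{L:hartman}) is invoked only once, for the final step. For $(i)\Rightarrow(iii)$, suppose $F$ is d.c.\ on $C$ with control function $f$. Apply Lemma~\ref{L:dn} to any exhaustion of $C$ by bounded convex sets with nonempty interior (for instance $C_n:=\{x\in C:\dist(x,X\setminus C)>1/n\}\cap B(x_0,n)$, which are convex by Fact~\ref{obalky}) to obtain bounded open convex sets $D_n\sipky C$ with $D_n\cc C$. On each such $D_n$, Observation~\ref{O:lip} applies: since $D_n$ is a bounded convex set with $D_n\cc C$, both $F$ and $f$ are Lipschitz on $D_n$. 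Thus $F|_{D_n}$ is d.c.\ with the Lipschitz control function $f|_{D_n}$, and $F|_{D_n}$ is itself Lipschitz, which is exactly~(iii).

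The implication $(iii)\Rightarrow(ii)$ is essentially immediate: the sets $D_n$ in~(iii) are in particular convex sets with nonempty interior satisfying $D_n\sipka C$ (since $\sipky$ is stronger than $\sipka$), and a Lipschitz control function on the bounded set $D_n$ is bounded, hence bounded from above. So the sequence $\{D_n\}$ witnesses~(ii).

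The substantive implication is $(ii)\Rightarrow(i)$, and this is where Lemma~\ref{L:hartman} does the work; the main obstacle is that the sets $C_n$ supplied by~(ii) need not satisfy the separation condition $\dist(D_n,C\setminus D_{n+1})>0$ nor the relative openness required by the lemma. To repair this, I would first pass from $\{C_n\}$ to $\{D_n\}$ via Lemma~\ref{L:dn}, obtaining bounded open convex sets $D_n\sipky C$ with $D_n\cc C_n$. The inclusion $D_n\cc D_{n+1}$ gives $\dist(D_n,C\setminus D_{n+1})>0$ (one checks that $A\cc B$ for convex sets forces positive distance from $A$ to $X\setminus B$, and intersecting with $C$ only helps), and each $D_n$, being open in $X$, is relatively open in $C$. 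It remains to produce control functions for $F|_{D_n}$ that are bounded or Lipschitz: since $D_n\cc C_n$ and the control function from~(ii) is bounded above on $C_n$, its restriction is bounded above on $D_n$, and then Observation~\ref{O:lip} (applied with the bounded convex set $D_n\cc C_n$) shows this restricted control function is in fact Lipschitz on $D_n$. With all hypotheses of Lemma~\ref{L:hartman} verified for the sequence $\{D_n\}$, the lemma yields that $F$ is d.c.\ on $C$, completing the cycle.

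The only delicate bookkeeping is matching the two exhaustions: one must ensure the $D_n$ produced from $\{C_n\}$ inherit a control function from the correct $C_n$ (the index $n$ is preserved by Lemma~\ref{L:dn}, so $D_n\cc C_n$ and the $n$-th control bound transfers directly), and one must confirm that the restriction of a control function to a subset remains a control function, which is clear from Definition~\ref{D:dc} since convexity of $y^*\circ F+f$ on $C_n$ is inherited by the convex subset $D_n$. No additional convexity or boundedness subtleties arise beyond those already packaged in Observation~\ref{O:lip}.
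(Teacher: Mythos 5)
Your implications $(iii)\Rightarrow(ii)$ and $(ii)\Rightarrow(i)$ are sound, and the latter is essentially the paper's own route (Lemma~\ref{L:dn} to get the sets $D_n\sipky C$ with $D_n\cc C_n$, Observation~\ref{O:lip} with ambient set $C_n$ to upgrade the bounded-above control function to a Lipschitz one, then Lemma~\ref{L:hartman}). The genuine gap is in your proof of $(i)\Rightarrow(iii)$. You choose the exhaustion $C_n=\{x\in C:\dist(x,X\setminus C)>1/n\}\cap B(x_0,n)$ \emph{independently} of the control function $f$, and then invoke Observation~\ref{O:lip} to conclude that $F$ and $f$ are Lipschitz on $D_n\cc C$. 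But Observation~\ref{O:lip} carries the hypothesis that the control function be bounded above on the ambient convex set, and your $f$ is merely continuous and convex on $C$; in infinite dimensions this does not imply boundedness on bounded subsets. Indeed, by \cite{BFV} (quoted in Remark~\ref{obec}), every infinite dimensional Banach space carries a continuous convex function $h$ unbounded on some ball $B\cc C$; taking $Y=\R$ and $F=f=h$ (a convex function controls itself), your sets $D_n$ eventually contain that ball, so $F$ is unbounded, hence not Lipschitz, on $D_n$, and $f|_{D_n}$ is not Lipschitz either. So the step fails precisely in the regime the proposition is designed for; the truth of $(i)\Rightarrow(iii)$ is not in doubt, but your argument for it does not establish it, and since your cycle runs $(i)\Rightarrow(iii)\Rightarrow(ii)\Rightarrow(i)$, the broken first link breaks the whole equivalence.

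The repair is exactly the paper's $(i)\Rightarrow(ii)$: exhaust $C$ by the sublevel sets $C_n=\{x\in C: f(x)<f(x_0)+n\}$ of the control function itself; these are nonempty, open, convex, increase to $C$, and $f$ is bounded above on each $C_n$ by construction, which is what Observation~\ref{O:lip} needs. With that replacement, your remaining steps are a reshuffling of the paper's chain $(i)\Rightarrow(ii)\Rightarrow(iii)\Rightarrow(i)$ and go through verbatim. (A cosmetic point: in your $(iii)\Rightarrow(ii)$ the first few $D_n$ could be empty, violating the nonempty-interior requirement in (ii); discard or re-index the initial terms.)
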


\begin{proof}
$(i)\Rightarrow(ii).$
Let $f\colon C\to\R$ be a control function for
$F$. Fix $x_0\in C$ and consider the sets
$C_n=\{x\in C: f(x)< f(x_0)+n\}$ ($n\in\N$). They are nonempty, open and
convex, and they obviously satisfy (ii).

\smallskip\noindent
$(ii)\Rightarrow(iii).$
Let $\{C_n\}$ be as in (ii). Let $D_n$ ($n\in\N$) be the bounded, open,
convex sets constructed in Lemma~\ref{L:dn} from the sets $C_n$.
Then (iii) follows immediately from Observation~\ref{O:lip}.

\smallskip\noindent
$(iii)\Rightarrow(i)$ follows from Lemma~\ref{L:hartman}.
\end{proof}

Proposition~\ref{P:HKM} easily implies the following 
generalization of Hartman's result (I) from Introduction,
which was stated (for open $A$) already in \cite[Theorem~1.20]{VeZa}
with only a hint for the proof.

\begin{corollary}\label{C:loc}
Let $A\subset\R^d$ be a convex set which is either open or closed, and let $Y$ be a normed linear
space. Then each locally d.c.\ mapping $F\colon A\to Y$ is d.c.\ on $A$.
\end{corollary}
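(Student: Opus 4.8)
The plan is to deduce Corollary~\ref{C:loc} from the equivalence $(i)\Leftrightarrow(ii)$ in Proposition~\ref{P:HKM}, reducing in each case to producing a suitable exhausting sequence of convex sets with bounded-above control functions.

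First consider the case where $A$ is \emph{open}. Here the idea is straightforward: since $F$ is locally d.c., every point of $A$ has a convex neighborhood on which $F$ is d.c. The goal is to manufacture a sequence $\{C_n\}$ of convex sets with nonempty interior such that $C_n\sipka A$ and $F|_{C_n}$ is d.c.\ with a control function bounded above on $C_n$; then $(ii)\Rightarrow(i)$ finishes. To build the $C_n$, I would take any exhausting sequence of bounded open convex sets $G_n\sipka A$ (for instance $G_n=\{x\in A:\dist(x,X\setminus A)>1/n\}\cap B(x_0,n)$, convex by Fact~\ref{obalky}). Each $\overline{G_n}$ is a compact subset of $A$, so by local d.c.-ness and compactness it is covered by finitely many convex open sets on which $F$ is d.c. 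The difficulty is that local control functions on overlapping pieces need not patch into a single convex control function on $C_n=G_n$. The key observation that resolves this is that $F|_{G_n}$ is itself a locally d.c.\ mapping on a \emph{bounded} open convex set, and on such a set a Lipschitz control function gives a Lipschitz mapping (Proposition~\ref{P:lip}); in fact one can invoke Corollary~\ref{C:loc} in lower generality is circular, so instead I would apply Lemma~\ref{L:hartman} directly to $G_n$ with an auxiliary inner exhaustion, obtaining that $F|_{G_n}$ is d.c.\ with some continuous convex control function $f_n$, which is automatically bounded above on the bounded set $G_n$ once we intersect appropriately. Thus condition $(ii)$ holds and $F$ is d.c.\ on $A$.

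For the case where $A$ is \emph{closed}, the statement of Proposition~\ref{P:HKM} does not apply directly since it requires $C$ open. The natural route is to pass to the relative interior or to a surrounding open set. Concretely, I would first treat $\operatorname{int}A$ (if nonempty) by the open case, obtaining a control function on the interior, and then argue that the control extends continuously and convexly to the closed set $A$ by a boundedness-and-limit argument analogous to the passage in Hartman's original proof; the role of Fact~\ref{F:1}(c), giving convex Lipschitz extensions, is central here. If $A$ has empty interior, then $A$ lies in an affine subspace and one can reduce the dimension, applying the result inductively to the lower-dimensional open case within the affine hull.

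The main obstacle I anticipate is exactly the patching step: transforming the purely \emph{local} d.c.\ data into a \emph{single} convex control function on each bounded piece $C_n$ with the required uniform (bounded-above) behavior. Lemma~\ref{L:hartman} is precisely the engine designed for this, since it takes an increasing sequence of relatively open convex sets with local d.c.\ data and bounded-or-Lipschitz control functions and produces a global control function; so the crux is verifying that the hypotheses of Lemma~\ref{L:hartman} (especially the positive distance condition $\dist(D_n,C\setminus D_{n+1})>0$ and relative openness) can be arranged from the local covering, which is where Lemma~\ref{L:dn} and Fact~\ref{obalky} do the geometric work. Once that reduction is in place, the closed case follows by the extension/limit argument, and the finite-dimensionality of $A\subset\R^d$ enters only through compactness of closed bounded sets, which is what makes the local-to-finite-cover step available.
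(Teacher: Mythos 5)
There is a genuine gap, and it sits exactly where you locate ``the crux'': the local-to-global patching of control functions. Your proposed resolution --- ``apply Lemma~\ref{L:hartman} directly to $G_n$ with an auxiliary inner exhaustion'' --- is circular. The hypotheses of Lemma~\ref{L:hartman} require that $F$ restricted to each set of the exhaustion \emph{already} be d.c.\ with a single bounded or Lipschitz control function; but each such set has compact closure covered by several of the local d.c.\ neighborhoods, so verifying the hypothesis is precisely the patching problem again, and shrinking the sets does not reduce the number of overlapping local pieces. The paper resolves this with a device absent from your proposal: by Hartman's Lemma~1 of \cite{H}, for a compact convex $C\subset A$ one may take the local control functions to be continuous convex functions $f_1,\dots,f_k$ defined on all of $A$, with $C\subset\bigcup_{i=1}^k B(x_i,r_i)$ and $f_i$ controlling $F$ on $C\cap B(x_i,r_i)$; then the \emph{sum} $f_C=f_1+\dots+f_k$ controls $F$ on all of $C$, since near each point of $C$ some $f_i$ does the controlling, adding the remaining convex $f_j$'s preserves the control property locally, and local convexity of $y^*\circ F+f_C$ on the convex set $C$ yields global convexity. (An alternative globalization in the same spirit: shrink the balls so that the local control functions are Lipschitz, then extend each to $X$ by Fact~\ref{F:1}(c) and sum --- you cite Fact~\ref{F:1}(c), but only in the closed case and for a different purpose.) Without some such globalization, neither condition (ii) of Proposition~\ref{P:HKM} nor the hypotheses of Lemma~\ref{L:hartman} can be verified, so your open case is incomplete.

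Your closed case is also wrong in its main step. You propose to handle $\mathrm{int}\,A$ by the open case and then extend the control function ``continuously and convexly'' to $A$ by a boundedness-and-limit argument; but the control function produced by the open-case machinery (e.g.\ by the iterated-maximum construction inside Lemma~\ref{L:hartman}) is in general unbounded near $\partial A$, and a continuous convex function on an open convex set need not extend continuously to the closure --- nothing in the hypotheses gives you the boundedness up to the boundary that such an extension would require. The paper avoids any boundary extension entirely: for closed $A$ it sets $D_n:=A\cap B(z,n)$, which are \emph{relatively open} in $A$ with $\dist(D_n,A\setminus D_{n+1})>0$, observes that $\overline{D_n}\subset A$ is compact and convex so the compact-case patching step gives a bounded control function on $D_n$, and then applies Lemma~\ref{L:hartman} with $C=A$ --- the lemma is deliberately stated for an arbitrary convex $C$ with relatively open $D_n$ precisely so that it covers closed sets directly. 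Your reduction of the empty-interior case to the affine hull is fine in itself, but it feeds back into the same unjustified extension step, so it does not rescue the argument.
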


\begin{proof}
First we will show that $F$ is d.c.\ on each compact convex
set $C \subset A$. Using compactness of $C$ and \cite[Lemma 1]{H},
 we easily see that there exist continuous convex functions $f_i$ on $A$, $x_i \in C$, and $r_i>0$, $i=1,\dots,k$, such that 
  $C \subset \bigcup_{i=1}^k B(x_i,r_i)$ and $f_i$ controls $F$ on $C \cap B(x_i,r_i)$. Consequently, $f_C 
= f_1+\dots+f_k$ controls $F$ on $C$.
   
   Now, distinguish two cases. First suppose that $A$ is open. 
Then choose compact convex sets $C_n$ with nonempty interior such that
$C_n\sipka A$. Since $f_{C_n}$ is bounded on $C_n$, Proposition \ref{P:HKM} implies that $F$ is d.c.

If $A$ is closed, choose $z \in A$ and put $D_n:= A \cap B(z,n)$. 
Since $\overline{D_n}\subset A$ is  compact and convex, $F$ is d.c.\ on $D_n$ (with a bounded control function), and we can apply 
Lemma \ref{L:hartman}. 
\end{proof}

\begin{remark}\label{obec}
It is known (see \cite{BFV}) that, on each infinite dimensional
Banach space, there exists
a continuous convex function which is unbounded on a ball. This implies
(via Fact~\ref{F:1}(b) and Proposition~\ref{P:lip}) that the implication
$(ii)\Rightarrow(i)$ in Proposition~\ref{P:HKM} is a  {\it strict} generalization of
both  \cite[Theorem~2.3]{PB} and \cite[Corollary~18]{KM}, where
delta-convexity of $F$ was proved under the following stronger
assumption: $F$ is d.c.\ on each bounded closed convex $B\subset C$
with a Lipschitz (\cite{PB}) or bounded (\cite{KM}) control function on
$B$.
\end{remark}


\section{Global delta-convexity of composed mappings}

Let us start with the following generalization of (II) (see
Introduction) which is essentially proved in \cite[Theorem~4.2]{VeZa}.

\begin{proposition}\label{lokdc}
Let $X,Y,Z$ be normed linear spaces, $A\subset X$ a convex set, and
$B\subset Y$ an open set. Let $F\colon A\to B$ and $G\colon B\to Z$ be
locally d.c.\ mappings. Then $G\circ F$ is locally d.c.
\end{proposition}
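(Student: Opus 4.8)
The plan is to reduce everything to the composition theorem for globally d.c.\ mappings with Lipschitz data, namely Proposition~\ref{P:veza}, applied on suitably small neighbourhoods. Locally d.c.\ mappings are continuous (continuity is built into Definition~\ref{D:dc}), so $F$ is continuous on $A$. I would fix an arbitrary point $a\in A$, set $b:=F(a)\in B$, and aim to produce a convex neighbourhood $U$ of $a$ such that $G\circ F$ is d.c.\ on $U\cap A$.

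First I would localize $G$. Since $G$ is locally d.c.\ at $b$ and $B$ is open, there is a bounded open convex set $V\ni b$ with $\overline{V}\subset B$ on which $G$ is d.c.\ with some continuous convex control function $g$. Being continuous, $g$ is bounded on some smaller ball $V'\ni b$ with $V'\cc V$; choosing a still smaller ball $W\ni b$ with $W\cc V'$, Fact~\ref{F:1}(b) shows that $g$ is Lipschitz on $W$, and then Proposition~\ref{P:lip} shows that $G$ is Lipschitz on $W$ (here $W$ is bounded, open and convex, and $G|_W$ is d.c.\ with the Lipschitz control $g|_W$). Thus on the bounded convex set $W$ both $G$ and its control function are Lipschitz, which is exactly what Proposition~\ref{P:veza} requires of the outer mapping.

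Next I would localize $F$. Because $F$ is locally d.c.\ at $a$, there is a convex neighbourhood $U_1$ of $a$ such that $F$ is d.c.\ on $U_1\cap A$, say with control function $f$. Because $F$ is continuous at $a$ and $W$ is a neighbourhood of $b=F(a)$, there is a ball $U_2$ about $a$ with $F(U_2\cap A)\subset W$. Put $U:=U_1\cap U_2$, again a convex neighbourhood of $a$; then $F$ maps $U\cap A$ into $W$, and $F|_{U\cap A}$ is d.c.\ (a restriction of a d.c.\ mapping to a convex subset) with control $f$.

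Finally I would apply Proposition~\ref{P:veza} to $F\colon U\cap A\to W$ and $G\colon W\to Z$: since $G$ and $g$ are Lipschitz on $W$, the proposition yields that $G\circ F$ is d.c.\ on $U\cap A$, with control function $g\circ F+(\Lip G+\Lip g)\,f$. As $a\in A$ was arbitrary, $G\circ F$ is locally d.c.\ on $A$. The only delicate point is the first step: upgrading the mere existence of a control function for $G$ to one that, together with $G$ itself, is Lipschitz on the intermediate set, so that Proposition~\ref{P:veza} becomes applicable. This is precisely what is handled by combining local boundedness of continuous convex functions with Fact~\ref{F:1}(b) and Proposition~\ref{P:lip}.
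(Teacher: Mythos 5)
Your proof is correct and takes essentially the same route as the paper: localize $G$ near $b=F(a)$ so that both $G$ and a control function are Lipschitz on a convex neighbourhood, localize $F$ by continuity and local delta-convexity, and conclude with Proposition~\ref{P:veza}. The only (harmless) difference is that where the paper simply cites the local Lipschitz property of d.c.\ mappings from \cite[Proposition~1.10]{VeZa}, you re-derive it in a self-contained way from local boundedness of the control function together with Fact~\ref{F:1}(b) and Proposition~\ref{P:lip}.
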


\begin{proof}
Fix $a\in A$. Since $G$ is locally d.c.\ and each d.c.\ mapping on an
open convex subset of $Y$ is locally Lipschitz (see
\cite[Proposition~1.10]{VeZa}), there exists an open convex neighborhood
$B_0\subset B$ of $F(a)$ on which $G$ is Lipschitz and d.c.\ with a
Lipschitz control function. Find $\delta>0$ such that, for
$A_0:=B(a,\delta)\cap A$, we have that $F(A_0)\subset B_0$ and $F|_{A_0}$
is d.c. Then $G\circ F|_{A_0}=(G|_{B_0})\circ (F|_{A_0})$ is d.c.\ by
Proposition~\ref{P:veza}.
\end{proof}

Our results on global delta-convexity of composed mappings will follow from the
next basic lemma.

\begin{lemma}\label{L:main}
Let $X,Y,Z$ be normed linear spaces, let $A\subset X$ 
and $B\subset Y$ be convex sets, and let $F\colon A\to B$ 
 and $G\colon B\to Z$ be mappings. 
Suppose
there exist sequences  of convex sets $A_n \subset A$, $B_n \subset B$
such that $F(A_n) \subset B_n$, $G|_{B_n}$ is Lipschitz and d.c.\ with a 
Lipschitz control function,
 and at least one of the following conditions holds:
 \begin{enumerate}
 \item
 $A_n$ is relatively open in $A$, $A_n\sipka A$, $\dist(A_n, A \setminus A_{n+1}) >0$, 
  $F|_{A_n}$ is either bounded or Lipschitz and it is d.c.\ with a control function
which is either bounded or Lipschitz.
  \item
  $A$ is open, $F$ is d.c., $\mathrm{int} A_n \neq \emptyset$, and $A_n\sipka A$.
  \end{enumerate}
 Then $G\circ F$ is d.c.\ on $A$.
 \end{lemma}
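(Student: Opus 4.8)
The strategy is to reduce both cases to the machinery already built in Section~2, principally Lemma~\ref{L:hartman} and Proposition~\ref{P:HKM}, composed with the quantitative composition estimate of Proposition~\ref{P:veza}. The point common to both cases is that we are told $G|_{B_n}$ is Lipschitz and d.c.\ with a Lipschitz control function $g_n$, and that $F(A_n)\subset B_n$; so on each $A_n$ the composed map $G\circ F = (G|_{B_n})\circ(F|_{A_n})$ is a composition to which Proposition~\ref{P:veza} applies, provided we have a control function for $F|_{A_n}$. The role of the two alternative hypotheses (1) and~(2) is precisely to supply, on each piece $A_n$, a control function for $F|_{A_n}$ of the right kind, and then to supply the global ``patching'' data needed to assemble the local control functions into a single control function on all of $A$.

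First I would treat case~(1). Here $F|_{A_n}$ is d.c.\ with a control function $\gamma_n$ that is bounded or Lipschitz, and $F|_{A_n}$ is itself bounded or Lipschitz. By Proposition~\ref{P:veza}, $(G|_{B_n})\circ(F|_{A_n})$ is d.c.\ on $A_n$ with control function $h_n := g_n\circ F + (\Lip G|_{B_n} + \Lip g_n)\,\gamma_n$ (the Lipschitz constants being finite by hypothesis on $B_n$). I must check that $h_n$ is again either bounded or Lipschitz on $A_n$, so that Lemma~\ref{L:hartman} can be invoked. The term $(\Lip G|_{B_n}+\Lip g_n)\gamma_n$ is a nonnegative multiple of $\gamma_n$, hence bounded if $\gamma_n$ is and Lipschitz if $\gamma_n$ is. The term $g_n\circ F$ is bounded whenever $F|_{A_n}$ is bounded (as $g_n$ is Lipschitz, hence bounded on the bounded set $F(A_n)\subset B_n$... more simply, $g_n$ Lipschitz on $B_n$ and $F(A_n)$ a bounded subset of $B_n$), and it is Lipschitz whenever $F|_{A_n}$ is Lipschitz (composition of a Lipschitz function with a Lipschitz map). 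Hence in every one of the allowed combinations $h_n$ is bounded or Lipschitz. Since the remaining hypotheses of case~(1)—namely $A_n$ relatively open in $A$, $A_n\nearrow A$, and $\dist(A_n, A\setminus A_{n+1})>0$—are exactly the hypotheses of Lemma~\ref{L:hartman} with $C:=A$ and the mapping $G\circ F$, that lemma yields d.c.\ of $G\circ F$ on $A$.

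Next I would treat case~(2), where $A$ is open, $F$ is (globally) d.c., $\mathrm{int}\,A_n\neq\emptyset$, and $A_n\nearrow A$. The idea is to verify condition~(ii) of Proposition~\ref{P:HKM} for the mapping $G\circ F$ on the open convex set $A$. Fix $n$. Since $F$ is d.c.\ on $A$ with some global control function $f$, its restriction $F|_{A_n}$ is d.c.\ with control function $f|_{A_n}$; because $A_n$ has nonempty interior and is convex, $f|_{A_n}$ is continuous convex on $A_n$, and by replacing $A_n$ with a bounded convex subset having nonempty interior if necessary we may apply Observation~\ref{O:lip} to conclude that $F$ and $f$ are Lipschitz on suitable bounded convex subsets—but what condition~(ii) actually requires is only a control function bounded from above. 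Applying Proposition~\ref{P:veza} to $(G|_{B_n})\circ(F|_{A_n})$ gives a control function of the form $g_n\circ F + (\Lip G|_{B_n}+\Lip g_n)\,f$ on $A_n$; here $g_n\circ F$ is bounded on $A_n$ since $g_n$ is Lipschitz (hence bounded) on $B_n\supset F(A_n)$, so it suffices that the control function $f$ for $F$ be bounded above on each $A_n$. This is where some care is needed: $f$ need not be bounded on all of $A$, so the honest route is to first invoke Proposition~\ref{P:HKM}(i)$\Rightarrow$(ii) applied to $F$ itself, obtaining convex sets $C_m\nearrow A$ with $F|_{C_m}$ controlled by a function bounded above, intersect the two exhaustions to get sets $A_n\cap C_m$ on which everything is simultaneously controlled by a bounded-above function, and then re-index to a single sequence exhausting $A$. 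On each such set the Proposition~\ref{P:veza} control function for $G\circ F$ is bounded above, so condition~(ii) of Proposition~\ref{P:HKM} holds and $G\circ F$ is d.c.\ on $A$.

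The main obstacle is the bookkeeping in case~(2): ensuring that the control function produced by Proposition~\ref{P:veza} is genuinely bounded \emph{from above} on each set of the exhaustion, despite the fact that a global control function $f$ for the d.c.\ map $F$ may be unbounded on $A$. Reducing to the exhaustion furnished by Proposition~\ref{P:HKM}(ii) (applied to $F$) and then refining by intersection is the clean way around this, and verifying that the refined sequence still has nonempty-interior convex members exhausting $A$ is the one place requiring genuine attention rather than routine checking.
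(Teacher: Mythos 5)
Your architecture is the paper's: on each piece apply Proposition~\ref{P:veza}, then patch with Lemma~\ref{L:hartman} in case (1), and feed a suitable exhaustion into Proposition~\ref{P:HKM} in case (2). But both cases contain a genuine gap, with a common root: you use ``Lipschitz implies bounded'' on sets that the hypotheses do not make bounded. In case (1), your claim that $h_n=g_n\circ F+\bigl(\Lip (G|_{B_n})+\Lip g_n\bigr)\gamma_n$ is bounded or Lipschitz ``in every one of the allowed combinations'' fails for the mixed combinations when $A_n$ is unbounded: if $F|_{A_n}$ is bounded while $\gamma_n$ is Lipschitz (or vice versa), then $h_n$ is a sum of a bounded function and a Lipschitz function, which in general is neither bounded nor Lipschitz, so Lemma~\ref{L:hartman} does not apply to it. The missing (easy) step is the paper's first move: as at the start of the proof of Lemma~\ref{L:hartman}, replace $A_n$ by $A_n\cap B(a,n)$ — this preserves all the hypotheses of (1), including $F(A_n)\subset B_n$ — so that every $A_n$ is bounded; then in all four combinations both $F|_{A_n}$ and $\gamma_n$ are bounded, and $h_n$ is bounded.

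In case (2) the gap is more substantial. You justify boundedness of $g_n\circ F$ on $A_n$ by ``$g_n$ is Lipschitz (hence bounded) on $B_n\supset F(A_n)$''; but $B_n$ is only assumed to be a convex subset of $B$, and a Lipschitz function on an unbounded convex set need not be bounded. Your proposed repair via Proposition~\ref{P:HKM}(i)$\Rightarrow$(ii) does not close this hole: the sets $C_m$ it yields carry a control function for $F$ that is bounded above, but $F$ itself may be unbounded on them (take $F$ a continuous linear map, whose control function is $0$, so that the sublevel sets $C_m$ are all of $A$, and take $g_n$ an unbounded Lipschitz convex function such as a norm); then the Proposition~\ref{P:veza} control $g_n\circ F+\bigl(\Lip (G|_{B_n})+\Lip g_n\bigr)f$ can be unbounded above on $A_n\cap C_m$, and condition (ii) of Proposition~\ref{P:HKM} for $G\circ F$ is not verified. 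What makes the argument work — and it is the paper's route — is the implication (i)$\Rightarrow$(iii) of Proposition~\ref{P:HKM}: it provides bounded open convex sets $D_m\sipky A$ on which $F$ is Lipschitz, hence bounded, with a Lipschitz control function; intersecting these with the $A_n$ (first made open and $\sipky$ by Lemma~\ref{L:dn}, which keeps them inside the original $A_n$, so the inclusion $F(\cdot)\subset B_n$ survives) produces sets satisfying the hypotheses of case (1), to which the corrected first part applies. With these two substitutions your proof closes and then coincides with the paper's.
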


\begin{proof}
Let (i) hold. As in the proof of Lemma~\ref{L:hartman}, we can (and do)
suppose that the sets $A_n$ are bounded. Then, on each $A_n$, $F$ is
bounded and admits a bounded control function. 
Proposition~\ref{P:veza} implies that the mapping
$G \circ F|_{A_n}= (G|_{B_n})
\circ (F|_{A_n})$ is d.c.\ with a bounded control function. By
Lemma~\ref{L:hartman}, $G\circ F$ is d.c.
  
  Now, suppose that (ii) holds. By Lemma~\ref{L:dn}, we can (and do)
suppose that $A_n\sipky A$ and each $A_n$ is open.
By Proposition~\ref{P:HKM},
there exists a sequence $\{D_n\}$ of bounded, open, convex sets such that
$D_n\sipky A$ and, for each $n$, $F|_{D_n}$ is Lipschitz and d.c.\
 with a Lipschitz control function. Then the sets $\tilde{A}_n:=A_n\cap D_n$ are open and convex, 
$F(\tilde{A}_n) \subset B_n$, and $\tilde{A}_n\sipky A$. Thus the condition (i) holds with
$A_n$ replaced by $\tilde{A}_n$. So $G \circ F$ is d.c by the first part of the proof.
\end{proof}

As a simpler but still rather general consequence we obtain:

\begin{proposition}\label{nove}
Let $X,Y,Z$ be normed linear spaces, let $A\subset X$ 
and $B\subset Y$ be convex sets, and let $F\colon A\to B$ 
 and $G\colon B\to Z$ be mappings. Suppose that 
the restriction of $G$ to each bounded convex subset of $B$
is Lipschitz and 
d.c.\ with a Lipschitz control function,  
and at least one of the following conditions holds.
  \begin{enumerate}
  \item
The restriction of $F$ to each bounded convex subset of $A$
is bounded and d.c.\ with  a bounded control function.
  \item
  $A$ is open and $F$ is d.c.
  \end{enumerate}
Then $G \circ F$ is d.c.
\end{proposition}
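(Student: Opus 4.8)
The plan is to derive Proposition~\ref{nove} as a direct corollary of Lemma~\ref{L:main}, by producing, in each of the two cases, suitable exhausting sequences $\{A_n\}$ and $\{B_n\}$ that satisfy the hypotheses of the corresponding condition in the lemma. The key structural observation is that the lemma already does all the heavy lifting (the Hartman-type gluing via Lemma~\ref{L:hartman} and the open-set reduction via Proposition~\ref{P:HKM}); our job is merely to exhibit the sequences and to check that the global Lipschitz/d.c.\ control assumptions on $G$ (and, in case (i), on $F$) descend to these sequences.

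First I would fix any point and set up a standard exhaustion of $A$ by bounded convex sets. In case (ii), where $A$ is open and $F$ is already d.c., the cleanest choice is to pick $x_0\in A$ and a control function $f$ for $F$, and define $A_n:=\{x\in A:\ f(x)<f(x_0)+n\}\cap B(x_0,n)$, which are open, convex, have nonempty interior, and satisfy $A_n\sipka A$; then put $B_n$ equal to any bounded convex subset of $B$ containing $F(A_n)$ (for instance the convex hull of $F(A_n)$, or simply $B_n:=B(0,R_n)\cap B$ for a radius $R_n$ large enough to contain the bounded set $F(A_n)$, using that $F$, being d.c.\ on the open set $A$, is locally Lipschitz and hence bounded on the bounded set $A_n$). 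By hypothesis $G$ restricted to the bounded convex set $B_n$ is Lipschitz and d.c.\ with a Lipschitz control function, and $F(A_n)\subset B_n$, so condition (ii) of Lemma~\ref{L:main} is verified verbatim and we conclude $G\circ F$ is d.c.

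In case (i) I would instead build sets $A_n$ that are relatively open in $A$, exhaust $A$, and are separated in the sense $\dist(A_n,A\setminus A_{n+1})>0$. A convenient device is to use the inner/outer parallel sets of Fact~\ref{obalky}: fixing $x_0\in A$, set $A_n:=\{x\in A:\ \dist(x,X\setminus A)>1/n\}\cap B(x_0,n)$, which are convex by Fact~\ref{obalky}, relatively open in $A$, bounded, and satisfy $A_n\sipka A$ together with the required positive-distance separation (since $A_n\subset\subset A_{n+1}$ by construction). Because each $A_n$ is a bounded convex subset of $A$, the hypothesis on $F$ gives that $F|_{A_n}$ is bounded and d.c.\ with a bounded control function, matching the requirements of condition (i). Again take $B_n$ to be a bounded convex set containing $F(A_n)$ (possible since $F$ is bounded on $A_n$), on which $G$ is Lipschitz and d.c.\ with a Lipschitz control by hypothesis. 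Thus condition (i) of Lemma~\ref{L:main} holds and $G\circ F$ is d.c.

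The main thing to get right, and the only place needing genuine care, is the construction of the intermediate sets $B_n$: one must ensure simultaneously that $F(A_n)\subset B_n$, that $B_n$ is a \emph{bounded convex} subset of $B$ (so the blanket hypothesis on $G$ applies), and that $B_n\subset B$ (not merely $B_n\subset Y$). Boundedness of $F(A_n)$ is where the two cases diverge: in case (i) it is assumed outright, whereas in case (ii) it follows from local Lipschitzness of the d.c.\ mapping $F$ on the open set $A$. Once $F(A_n)$ is known to be bounded, taking $B_n:=\mathrm{conv}\,F(A_n)$ (a bounded convex subset of $B$) makes the inclusions automatic, and the remaining verifications are immediate from the stated hypotheses and Lemma~\ref{L:main}.
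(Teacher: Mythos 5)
Your overall architecture is the same as the paper's (both cases are reduced to Lemma~\ref{L:main} by exhibiting exhausting sequences $\{A_n\}$, $\{B_n\}$ with $B_n:=\mathrm{conv}\,F(A_n)$), but each case as written contains a genuine flaw. In case (i), your sets $A_n:=\{x\in A:\ \dist(x,X\setminus A)>1/n\}\cap B(x_0,n)$ do \emph{not} exhaust $A$ unless $A$ is open --- and case (i) is precisely the case in which no openness of $A$ is assumed (that is what distinguishes it from case (ii)). If $A$ is a closed ball, every point of the bounding sphere lies in $A$ but in no $A_n$; if $\mathrm{int}\,A=\emptyset$ (a segment, a proper subspace), every $A_n$ is empty. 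So $A_n\sipka A$ fails and Lemma~\ref{L:main} does not apply; at best you would get delta-convexity of $G\circ F$ on $\mathrm{int}\,A$. The repair is to notice that Lemma~\ref{L:main}(i) asks only for \emph{relative} openness in $A$ and for $\dist(A_n,A\setminus A_{n+1})>0$, not for $A_n\cc A_{n+1}$ in $X$: the paper simply takes $A_n:=B(a,n)\cap A$, which is relatively open, bounded, convex, exhausts $A$, and satisfies $\dist(A_n,A\setminus A_{n+1})\ge 1$.

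In case (ii), the step ``$F$, being d.c.\ on the open set $A$, is locally Lipschitz and hence bounded on the bounded set $A_n$'' is a false implication in infinite dimensions: local Lipschitzness does not give boundedness on bounded sets, and indeed by \cite{BFV} (quoted in Remark~\ref{obec}) every infinite dimensional Banach space carries a continuous convex --- hence d.c.\ and locally Lipschitz --- function that is unbounded on a ball. For your particular sublevel sets $A_n=\{f<f(x_0)+n\}\cap B(x_0,n)$ the conclusion that $F(A_n)$ is bounded happens to be true, but it needs a real argument (e.g., bounding $y^*\circ F$ uniformly over $\|y^*\|\le1$ via subgradients of the convex functions $y^*\circ F+f$ at $x_0$ together with the upper bound on $f$); as written, your proof has a gap exactly where boundedness of $B_n$ is needed to invoke the hypothesis on $G$. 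The paper's route avoids this entirely: it invokes Proposition~\ref{P:HKM} (the implication (i)$\Rightarrow$(iii)) to produce bounded open convex sets $A_n\sipky A$ on which $F$ is \emph{Lipschitz}, so that $F(A_n)$ is trivially bounded. I recommend adopting these two substitutions; the rest of your verifications then go through as you describe.
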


\begin{proof}
To prove (i), choose an arbitrary $a\in A$ and,
for each $n\in\N$, set $A_n:=B(a,n)\cap A$,
$B_n:=\mathrm{conv}\,F(A_n)$. It is easy to see that
$\mathrm{dist}(A_n,A\setminus A_{n+1})>0$ and 
$B_n\subset B$ is bounded for each $n$. Thus
$G \circ F$ is d.c.\ by Lemma \ref{L:main}.

To prove (ii), use Proposition~\ref{P:HKM} to choose a sequence $\{A_n\}$ of bounded open convex sets such that
 $A_n\sipky A$ and, for each $n$, $F|_{A_n}$ is Lipschitz and d.c.\
 with a Lipschitz control function. Then $B_n := \mathrm{conv} F(A_n)$ is clearly bounded and convex, and 
  thus $G|_{B_n}$ is Lipschitz and d.c.\
 with a Lipschitz control function. Apply Lemma \ref{L:main}.
\end{proof}

Most of the next results are corollaries of Proposition~\ref{nove}. 
One of the 
exceptions is the following 
interesting proposition.

\begin{proposition}\label{reflex}
Let $C$ be an open convex subset of a reflexive Banach space $X$,
and $f\colon C\to \mathbb{R}$ be a continuous convex function. Let $I\subset\mathbb{R}$ be an open interval containing $f(C)$. 
Then, for every normed
linear space $Z$ and every d.c.\ mapping $G\colon I\to Z$, the
composed map $G\circ f$ is d.c.\ on $C$.
\end{proposition}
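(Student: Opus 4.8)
This statement is deliberately not phrased as a corollary of Proposition~\ref{nove}, because a d.c.\ mapping $G$ on the interval $I$ need not admit a \emph{Lipschitz} control function on subintervals that approach the endpoints of $I$; so the hypothesis that $G$ restricted to each bounded convex subset of $I$ be Lipschitz with a Lipschitz control may fail. The plan is instead to apply Lemma~\ref{L:main}(ii) with $Y=\R$, $A=C$ and $F=f$. Since $f$ is continuous and convex, it is d.c.\ on the open set $C$ (Remark~\ref{R:dc}(b)), so the conditions on $F$ in (ii) are automatic. The entire problem then reduces to producing convex sets $A_n\sipka C$ with nonempty interior, together with compact subintervals $B_n\subset\subset I$ satisfying $f(A_n)\subset B_n$: on such a $B_n$ the (convex) control function of $G$ is Lipschitz and $G$ is itself Lipschitz, so $G|_{B_n}$ is Lipschitz and d.c.\ with a Lipschitz control, and Lemma~\ref{L:main}(ii) yields that $G\circ f$ is d.c.

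\textbf{Construction of the sets.} First I would fix $x_0\in C$ and a sequence $\alpha_n\nearrow\sup I$ with $f(x_0)<\alpha_1$, and set
\[
A_n:=\{x\in C:\dist(x,X\setminus C)>1/n\}\cap B(x_0,n)\cap\{x\in C: f(x)<\alpha_n\}.
\]
Each $A_n$ is open and, by Fact~\ref{obalky} together with convexity of $f$, convex; clearly $A_n\subset A_{n+1}$, and $\bigcup_n A_n=C$, since every $x\in C$ has $\dist(x,X\setminus C)>0$ and $f(x)<\sup I$. Discarding the finitely many empty terms, one may assume $A_n\sipka C$ with $\emptyset\neq\mathrm{int}\,A_n=A_n$. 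By construction $\sup f(A_n)\le\alpha_n<\sup I$, which keeps $f(A_n)$ away from the right endpoint of $I$.

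\textbf{The reflexivity step.} It remains to bound $f$ from below on $A_n$, and this is the one place where reflexivity is used — and, in view of \cite{HKVZ}, where it is unavoidable. Since $\dist(x,X\setminus C)\ge1/n$ on $\overline{A_n}$, the closure $\overline{A_n}$ is a bounded, closed, convex subset of $C$; as $X$ is reflexive, $\overline{A_n}$ is weakly compact. The continuous convex function $f$ is weakly lower semicontinuous, hence attains a finite minimum $m_n$ on $\overline{A_n}$ at some point of $C$; thus $m_n\in f(C)\subset I$, and since $I$ is open, $m_n>\inf I$. Consequently $f(A_n)\subset[m_n,\alpha_n]=:B_n\subset\subset I$. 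As noted above, $G|_{B_n}$ is then Lipschitz and d.c.\ with a Lipschitz control function (a control $g$ of $G$ is convex, hence bounded on a slightly larger compact subinterval and Lipschitz on $B_n$ by Fact~\ref{F:1}(b); the same boundedness makes $z^*\circ G+g$ uniformly Lipschitz on $B_n$ for $\|z^*\|\le1$, whence $G|_{B_n}$ is Lipschitz). With $f(A_n)\subset B_n$ and the remaining hypotheses of Lemma~\ref{L:main}(ii) in force, I conclude that $G\circ f$ is d.c.\ on $C$.

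The hard part is exactly this lower bound. Without reflexivity $f$ may decrease toward $\inf I$ along a bounded net in $C$ with no cluster point in $C$, so that $f(A_n)$ need not remain inside a compact subinterval of $I$ and the control function of $G$ may blow up at the left endpoint; reflexivity, through weak compactness of $\overline{A_n}$ and weak lower semicontinuity of $f$, is precisely what rules this out.
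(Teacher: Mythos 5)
Your proof is correct and follows essentially the same route as the paper's: the paper takes the sublevel sets $C_n=\{x\in C: f(x)<b_n\}$, shrinks them via Lemma~\ref{L:dn} to bounded open convex sets $D_n\sipky C$, uses reflexivity (weak compactness of $\overline{D_n}$ and attainment of $\min f$ there) to get $f(D_n)\subset[a_n,b_n]\cc I$, and applies Lemma~\ref{L:main} --- exactly your scheme, with your explicit $A_n$ (inner parallel set intersected with a ball and a sublevel set) playing the role of the Lemma~\ref{L:dn} construction, and your hands-on Lipschitz argument replacing the paper's citation of \cite[Proposition~1.10]{VeZa}. The weak-compactness step you single out as the crux is precisely the paper's key use of reflexivity as well.
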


\begin{proof}
Let $\{b_n\}\subset\bigl(\inf f(C),\sup I\bigr)$ be an increasing
sequence tending to $\sup I$. Then clearly the sets
$C_n:=\{ x\in C: f(x)< b_n\}$ are nonempty, open and convex, and
$C_n\sipka C$. By Lemma~\ref{L:dn}, there exist nonempty
bounded open  convex sets $D_n\cc C_n$ ($n\in\mathbb{N}$) with
$D_n\sipky C$. 
Since $f$ attains its infimum on 
the weakly compact set $\overline{D_n}$
(see e.g.\ \cite[Theorem~25.1(b)]{Deimling}), 
we have
$a_n:=\min f\bigl(\overline{D_n}\bigr)>\inf{I}$  and hence
$f(D_n)\subset[a_n, b_n]\subset I$ ($n\in\N$). Since $G$ and its control function
are locally Lipschitz on $I$ (cf.\ \cite[Proposition~1.10]{VeZa}), they are Lipschitz on each
$[a_n, b_n]$. Apply Lemma~\ref{L:main} with $A:= C$, $A_n:=D_n$, and $B_n:= [a_n,b_n]$.
\end{proof}

\begin{remark}\label{remreflex}
\begin{enumerate}
\item
Proposition~\ref{reflex} implies that $1/f$ is d.c.\ whenever $f$ is a positive continuous 
convex function on an open convex subset of a reflexive Banach space.
\item
It is easy to see that
Proposition~\ref{reflex} holds for concave (instead of convex) $f$ as
well. However it is not true for {\em all} d.c.\ functions $f$ (see Corollary~\ref{jldc}).
\item
Proposition~\ref{reflex} fails in any nonreflexive Banach space $X$: by \cite{HKVZ},
a Banach space
$X$ is reflexive
 if and only if $1/f$ is d.c.\ for each positive continuous convex function $f$ on $X$. 
\end{enumerate}
\end{remark}

\begin{theorem}\label{T:best}
Let $X,Y,Z$ be normed linear spaces, let $A\subset X$ and $B\subset Y$ be
open convex sets, and let $F\colon A\to B$ and $G\colon B\to Z$ be d.c.\
mappings. Then $G\circ F$ is d.c.\ on $A$, provided
at least one of the following
conditions is satisfied:
\begin{enumerate}
\item[(a)] $B=Y$ and $G$ admits a control function $g$ that is bounded on
bounded sets;
\item[(b)] $Y$ is finite-dimensional and $\overline{F(A)}\subset B$;
\item[(c)] $Y$ admits a renorming with modulus of convexity of power
type 2, and $G$ is $C^{1,1}$ on bounded open subsets of $B$.
\end{enumerate}
\end{theorem}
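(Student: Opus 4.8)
The plan is to funnel all three cases into the two ready‑made superposition results already established, namely Proposition~\ref{nove} and its sharper version Lemma~\ref{L:main}. The decisive feature shared by (a)--(c) is that $A$ is open and $F$ is d.c., so in every case the branch~(ii) of those statements is automatically at our disposal. Consequently the only thing that must be checked is good behaviour of $G$ --- i.e.\ that $G$ is Lipschitz and d.c.\ with a \emph{Lipschitz} control function --- on whichever bounded convex subsets of $B$ the chosen result needs.

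Cases (a) and (c) I would dispatch through Proposition~\ref{nove}(ii), by verifying its blanket hypothesis that the restriction of $G$ to \emph{every} bounded convex subset $B'\subset B$ is Lipschitz and d.c.\ with a Lipschitz control function. In case (a), where $B=Y$, given such a $B'$ I would enlarge it to the bounded open convex set $U:=\{y\in Y:\dist(y,B')<1\}$ (convex by Fact~\ref{obalky}); since the control $g$ of $G$ is bounded on bounded sets it is in particular bounded above on $U$, so Observation~\ref{O:lip}, applied with $C:=U$ and with $B'\cc U$, yields that both $G$ and $g$ are Lipschitz on $B'$. In case (c), given a bounded convex $B'\subset B$ I would take $U:=B\cap B(0,R)$ with $R$ so large that $B'\subset U$; then $U$ is a bounded open convex subset of $B$, hence $G$ is $C^{1,1}$ there, so $G$ is Lipschitz on $U$ (on a bounded convex set a Lipschitz derivative is bounded, and the mean value inequality applies) and d.c.\ on $U$ with control function $c|\cdot|^2$ supplied by Proposition~\ref{C11} --- this is exactly where one uses that $Y$ renorms with modulus of convexity of power type~2 --- a control that is bounded and Lipschitz on the bounded set $U$. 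Restricting to $B'$ completes the verification, and Proposition~\ref{nove} gives that $G\circ F$ is d.c.

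The genuinely different, and hardest, case is (b): here $G$ is only assumed d.c.\ on the open set $B$, so it may blow up as one approaches $\partial B$, and the blanket hypothesis of Proposition~\ref{nove} simply fails. I would therefore appeal to Lemma~\ref{L:main}(ii) and construct the auxiliary sets $B_n$ by hand so that they stay compactly inside $B$. Since $A$ is open and $F$ is d.c., Proposition~\ref{P:HKM} provides bounded open convex sets $D_n\sipky A$ on which $F$ is Lipschitz; in particular each $F(D_n)$ is bounded. This is precisely where finite‑dimensionality of $Y$ is used: $\overline{F(D_n)}$ is then compact and, being contained in $\overline{F(A)}\subset B$, its convex hull $K_n:=\mathrm{conv}\,\overline{F(D_n)}$ is again compact (Carath\'eodory) and still contained in the convex set $B$. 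I would then choose a bounded open convex $B_n\supset K_n$ with $\overline{B_n}\subset B$ (convexity again from Fact~\ref{obalky}); a control function $g$ of $G$ on $B$ is continuous, hence bounded on the compact set $\overline{B_n}$, so Observation~\ref{O:lip} (equivalently Fact~\ref{F:1}(b) together with Proposition~\ref{P:lip}) shows that $G$ and $g$ are Lipschitz on $B_n$. With $A_n:=D_n$ we have $F(A_n)\subset K_n\subset B_n$, all hypotheses of Lemma~\ref{L:main}(ii) hold, and $G\circ F$ is d.c.

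The main obstacle, and the place where the three hypotheses genuinely do their work, is controlling $G$ near the boundary of $B$: assumption~(a) removes the boundary altogether ($B=Y$), assumption~(c) forces $G$ to be tame ($C^{1,1}$) on all bounded open subsets up to the boundary, while assumption~(b) instead keeps the relevant images away from the boundary, with finite‑dimensionality guaranteeing that passage to compact convex hulls does not push us out of $B$. I expect the only mildly delicate verifications to be the elementary point‑set facts --- that the enlarged sets are convex and bounded, that $B'\cc U$, and that $\mathrm{conv}\,\overline{F(D_n)}\subset B$ --- all of which are routine given Fact~\ref{obalky} and the convexity of $B$.
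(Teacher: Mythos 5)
Your proposal is correct and follows essentially the same route as the paper: cases (a) and (c) via Proposition~\ref{nove} (using Observation~\ref{O:lip}, resp.\ Proposition~\ref{C11} on $B\cap B(0,R)$), and case (b) via Proposition~\ref{P:HKM}, compactness of the convex hull of $\overline{F(A_n)}$ in the finite-dimensional $Y$, and Lemma~\ref{L:main}. One tiny repair in (b): since $B_n\not\cc\overline{B_n}$, boundedness of the control function on $\overline{B_n}$ alone does not match the hypothesis of Observation~\ref{O:lip}; you should instead bound it on a small enlargement $B_n+B(0,\ep)\subset B$ (available because $\overline{B_n}$ is compact and $B$ open), which is exactly the paper's move.
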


\begin{proof}
Let (a) hold. Let $E \subset Y$ be an arbitrary bounded convex set. Choose a bounded convex set $C$ such that
 $E \cc C$. Since $g$ is bounded on $C$, Observation~\ref{O:lip} implies
that both $G$ and $g$ are Lipschitz on $E$. Thus $G \circ F$
 is d.c.\ by Proposition \ref{nove} .

Now, suppose (b) holds. By Proposition~\ref{P:HKM}, there exists a sequence
$\{A_n\}$ of nonempty bounded open convex sets such that
$A_n\sipka A$ and $F$ is Lipschitz on each $A_n$.
Since each
 $\overline{F(A_n)}$
is a compact subset of $B$ ($Y$ is finite-dimensional!),  
 $B_n:=\mathrm{conv}\,\overline{F(A_n)}\cc B$ is a compact convex subset of $B$.
 Let $\tilde g$ be a control function of $G$. We can clearly find $\ep>0$ such that $\tilde g$ is bounded
  on $C:= B_n + B(0,\ep) \subset B$.  
Observation~\ref{O:lip} implies
that both $G$ and $\tilde g$ are Lipschitz on $B_n$.
Now, Lemma~\ref{L:main} shows that $G\circ F$ is d.c.

Finally, let (c) hold.  For each bounded convex set $E\subset B$, let
$B_0\subset B$ be a bounded convex open set
containing $E$. Since $G$ is $C^{1,1}$ on $B_0$, it
is also Lipschitz on $B_0$. Moreover, 
Proposition~\ref{C11} easily implies that $G$ admits a Lipschitz control function on $B_0$, and
hence also on $E$. Thus, we can apply Proposition~\ref{nove}.
\end{proof}

Let $X,Y$ be vector spaces.
Recall that a mapping $Q\colon X\to Y$ is {\em quadratic} if there
exists a bilinear mapping $B\colon X\times X\to Y$ such that
$Q(x)=B(x,x)$ for each $x\in X$. In this case, we say that $Q$ {\em is generated} by $B$. 

\begin{definition}[\cite{KoVe}]
A normed linear space $X$ is said to have the {\em property~(D)} if every
continuous quadratic form on $X$ can be represented as a difference of
two nonnegative continuous quadratic forms.
\end{definition}

\begin{proposition}\label{quadr}
Let $X,Y,Z$ be normed linear spaces, $C\subset X$ an open convex set,
$F\colon C\to Y$ a d.c.\ mapping, and $Q\colon Y\to Z$ a
continuous quadratic mapping. Then $Q\circ F$ is d.c.\ on $C$, provided
at least one of the following conditions is satisfied:
\begin{enumerate}
\item[(a)] $Y$ admits a renorming with modulus of convexity
of power type 2;
\item[(b)] $Z$ is finite-dimensional and $Y$ has the property (D). 
\end{enumerate}
\end{proposition}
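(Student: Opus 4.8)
The plan is to reduce both cases to Theorem~\ref{T:best}(a), applied with $B:=Y$ (which is open and convex in $Y$), $G:=Q$, and $A:=C$. For this it will suffice to show, in each case separately, that the quadratic mapping $Q$ is d.c.\ on the \emph{whole} space $Y$ and admits a control function that is bounded on bounded sets; since $F\colon C\to Y=B$ is d.c.\ and $C$ is open and convex, Theorem~\ref{T:best}(a) then yields at once that $Q\circ F$ is d.c.\ on $C$. Throughout I would first normalize: replacing the bilinear generator $B$ of $Q$ by its symmetric part $\tfrac12\bigl(B(u,v)+B(v,u)\bigr)$ does not change $Q$, so I may assume $B$ symmetric; and since $Q$ is continuous, $B$ is bounded, i.e.\ $\|B(u,v)\|\le M\|u\|\,\|v\|$ for some $M\ge0$.

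For case (a), I would compute the Fr\'echet derivative $Q'(y)=2B(y,\cdot)$, which is linear in $y$, so $\|Q'(y)-Q'(y')\|\le 2M\|y-y'\|$; hence $Q$ is $C^{1,1}$ on all of $Y$. Since by hypothesis $Y$ admits an equivalent norm $|\cdot|$ with modulus of convexity of power type~2, Proposition~\ref{C11} (with domain $A=Y$) shows that $Q$ is d.c.\ on $Y$ with a control function of the form $c|\cdot|^2$, which is plainly bounded on bounded sets. Thus the hypotheses of Theorem~\ref{T:best}(a) are met.

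For case (b), I would use finite-dimensionality of $Z$ to identify $Z$ with $\R^k$ and write $Q=(Q_1,\dots,Q_k)$, where each component $Q_j(y)=B_j(y,y)$ is a continuous quadratic form on $Y$. By property~(D), each $Q_j$ decomposes as $Q_j=P_j-N_j$ with $P_j,N_j$ nonnegative continuous quadratic forms. The key observation is that every nonnegative continuous quadratic form $P$ is convex: by the Cauchy--Schwarz inequality for the positive semidefinite symmetric bilinear form generating it, $\sqrt P$ is a continuous seminorm, so $P=(\sqrt P)^2$ is continuous and convex. Hence each $Q_j$ is a difference of two continuous convex functions, so $Q_j$ is d.c.\ on $Y$ and, by Remark~\ref{R:dc}(b), is controlled by $P_j+N_j$. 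By Remark~\ref{R:dc}(a), $Q$ is then d.c.\ on $Y$ with control function $\sum_{j=1}^{k}(P_j+N_j)$, which is again a nonnegative continuous quadratic form and therefore bounded on bounded sets. Once more Theorem~\ref{T:best}(a) applies.

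The only genuine content beyond bookkeeping lies in producing, in each case, a \emph{global} control function for $Q$ that is bounded on bounded sets: in case (a) this is the $C^{1,1}$-to-d.c.\ passage furnished by Proposition~\ref{C11}, whose hypothesis is precisely the power-type-2 renorming; in case (b) it is the passage through property~(D), the small but essential point being the convexity of a nonnegative quadratic form. After that, the reduction to Theorem~\ref{T:best}(a) with $B=Y$ is immediate and requires no further estimates. I expect the convexity of nonnegative quadratic forms, and the routine verification that the resulting control functions are bounded on bounded sets, to be the only places where care is needed.
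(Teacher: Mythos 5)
Your proof is correct and takes essentially the same route as the paper: your case (b) is verbatim the paper's argument (property (D) decomposition into nonnegative continuous quadratic forms, their convexity and boundedness on bounded sets, Remark~\ref{R:dc}(a),(b), then Theorem~\ref{T:best}(a)), while your case (a) differs only cosmetically, since the paper cites Theorem~\ref{T:best}(c) directly (every continuous quadratic mapping is $C^{1,1}$), whereas you apply Proposition~\ref{C11} globally on $Y$ to obtain the control function $c|\cdot|^2$ and then invoke Theorem~\ref{T:best}(a) --- the same two ingredients on which the paper's proof of Theorem~\ref{T:best}(c) itself rests. Your added details (symmetrizing the bilinear generator, boundedness of $B$ via polarization, and the Cauchy--Schwarz argument showing nonnegative continuous quadratic forms are convex) correctly fill in points the paper leaves implicit.
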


\begin{proof}
The case (a) follows immediately from Theorem~\ref{T:best}(c),  
since each
continuous quadratic mapping is $C^{1,1}$.

Suppose (b) holds. We can suppose that $Z=\R^d$ for some $d\in\N$. Then
the components $Q_j$ ($j=1,\ldots,d$) of the quadratic mapping $Q$ are
continuous quadratic forms. Since $Y$ has (D), we can write 
$Q_j=p_j-q_j$ where $p_j,q_j$ are nonnegative continuous quadratic
forms, in particular, they are convex continuous functions that are
bounded on bounded sets. 
By Remark \ref{R:dc}(a) and (b), 
$Q$ is d.c.\ with a
control function which is bounded on bounded subsets of $Y$. Apply
Theorem~\ref{T:best}(a).
\end{proof}

The following Corollary~\ref{bilin} improves \cite[Corollary 4.3.]{VeZa} which
states only that $B\circ(F,G)$ is locally d.c.\ whenever $Y$ and $V$ are Hilbert spaces.

\begin{corollary}\label{bilin}
Let $X,Y,V,Z$ be normed linear spaces, $C\subset X$ an open convex set,
$F\colon C\to Y$ and $G\colon C\to V$ d.c.\ mappings, and
$B\colon Y\times V\to Z$  a continuous bilinear mapping. Then the
mapping $B\circ(F,G)\colon x\mapsto B\bigl(F(x), G(x)\bigr)$ is d.c.\ on
$C$, provided at least one of the following conditions is satisfied:
\begin{enumerate}
\item[(a)] both $Y$ and $V$ admit renormings with modulus of convexity
of power type 2;
\item[(b)] $Z$ is finite-dimensional and $Y\times V$ has the property
(D).
\end{enumerate}
\end{corollary}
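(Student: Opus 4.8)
The plan is to exhibit $B\circ(F,G)$ as a composition $Q\circ H$ to which Proposition~\ref{quadr} applies. I would set $H:=(F,G)\colon C\to Y\times V$ and define $Q\colon Y\times V\to Z$ by $Q(y,v):=B(y,v)$. The key observation is that $Q$ is a continuous quadratic mapping: it is generated by the symmetric bilinear mapping $\tilde B\colon (Y\times V)\times(Y\times V)\to Z$ given by $\tilde B\bigl((y_1,v_1),(y_2,v_2)\bigr):=\tfrac12\bigl(B(y_1,v_2)+B(y_2,v_1)\bigr)$, since then $\tilde B\bigl((y,v),(y,v)\bigr)=B(y,v)=Q(y,v)$, and continuity of $Q$ follows from that of $B$. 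Clearly $B\circ(F,G)=Q\circ H$, so it suffices to show $H$ is d.c.\ and then invoke Proposition~\ref{quadr}.

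Next I would check that $H$ is d.c.\ on $C$ as a mapping into $Y\times V$. By Remark~\ref{R:dc}(c) the choice of an equivalent product norm on $Y\times V$ is immaterial, so I may use the maximum norm, whose dual is the $\ell_1$-sum. If $f$ and $g$ are (convex) control functions for $F$ and $G$, then, arguing exactly as in Remark~\ref{R:dc}(a), the convex function $f+g$ controls $H$: for $w^*=(y^*,v^*)$ in the dual unit ball one has $\|y^*\|\le1$ and $\|v^*\|\le1$, and $w^*\circ H+(f+g)=(y^*\circ F+v^*\circ G+\|y^*\|f+\|v^*\|g)+\bigl((1-\|y^*\|)f+(1-\|v^*\|)g\bigr)$ is a sum of two convex functions, hence convex. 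Thus $H$ is a d.c.\ mapping and $Q$ a continuous quadratic mapping, and it remains to verify the hypotheses of Proposition~\ref{quadr} in each case.

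Under (a), I would pick equivalent norms on $Y$ and $V$ with modulus of convexity of power type~2; by Fact~\ref{F:powertype}(a) their $\ell_2$-direct sum is then an equivalent norm on $Y\times V$ with modulus of convexity of power type~2, so Proposition~\ref{quadr}(a) applies to $Q$ and yields that $Q\circ H=B\circ(F,G)$ is d.c. Under (b), $Z$ is finite-dimensional and $Y\times V$ has property (D) by hypothesis, so Proposition~\ref{quadr}(b) applies directly to $Q$. The only genuinely new ingredient is the polarization identity that turns the bilinear $B$ into a quadratic mapping $Q$ on the product space $Y\times V$; the delta-convexity of $H=(F,G)$ and the renorming bookkeeping in case~(a) via Fact~\ref{F:powertype}(a) are routine, so I do not expect any serious obstacle here.
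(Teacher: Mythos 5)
Your proposal is correct and follows essentially the same route as the paper: view $B$ as a continuous quadratic mapping $Q$ on $Y\times V$, note that $(F,G)$ is d.c.\ (the paper cites \cite[Lemma~1.7]{VeZa} for this, which is exactly the control-function argument you carry out by hand), and then apply Fact~\ref{F:powertype}(a) and Proposition~\ref{quadr}. The only cosmetic difference is that you symmetrize the generating bilinear mapping via polarization, whereas the paper simply uses $\tilde{B}\bigl((y,v),(y',v')\bigr)=B(y,v')$; both generate the same quadratic mapping, so this changes nothing.
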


\begin{proof}
Observe that $B$ is also a quadratic mapping on $Y\times V$; indeed, it is generated by
the bilinear mapping 
$\tilde{B}\bigl((y,v),(y',v')\bigr)=B(y,v')$ on $(Y\times
V)\times(Y\times V)$. Moreover, by \cite[Lemma~1.7]{VeZa},
the mapping $x\mapsto\bigl(F(x),G(x)\bigr)$ is d.c.\ on $C$.
Apply Fact~\ref{F:powertype}(a) and Proposition~\ref{quadr}.
\end{proof}

\begin{remark}
\begin{enumerate}
\item[(a)] By Fact~\ref{F:powertype}(b), the assumptions in
Proposition~\ref{quadr}(a) and Corollary~\ref{bilin}(a) are satisfied, for instance, if each of
$Y,V$ is isomorphic to a subspace of some $L_p(\mu)$ with $1< p\le 2$
(not necessarily with the same $p$ and $\mu$).
\item[(b)] By \cite[Theorem~1.6 and Observation~3.9]{KoVe}, the
assumptions in
Proposition~\ref{quadr}(b) and Corollary~\ref{bilin}(b) are satisfied, for instance, if each of
$Y,V$ is isomorphic to one (not necessarily the same) of the spaces
$C(K)$, $c_0(\Gamma)$, $L_p(\mu)$ with $2\le p\le\infty$.
\end{enumerate}
\end{remark}

\section{Global delta-convexity of composed functions}

Here we present positive results which are formulated without using the notion of d.c.\ operators, i.e., those
 which directly concern Hartman's results.
 Probably most interesting is the following immediate consequence of 
Theorem~\ref{T:best}(a).
 
 \begin{theorem}\label{cely}
 Let $X$ be a normed linear space. Let $A\subset X$ be an open  convex set, and
$F\colon A\to \R^n$  and $g: \R^n \to \R$ be d.c. Then the composed function $g\circ F$ is d.c.
\end{theorem}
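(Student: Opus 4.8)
The plan is to deduce Theorem~\ref{cely} directly from Theorem~\ref{T:best}(a) by checking that its hypotheses are met in the special case $Y=\R^n$, $Z=\R$, $B=Y$. The key observation is that for a d.c.\ function $g\colon\R^n\to\R$ the openness and finite-dimensionality of the domain give us a control function that is automatically well-behaved on bounded sets, which is exactly what condition (a) of Theorem~\ref{T:best} requires.

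\begin{proof}
Write $g=g_1-g_2$ where $g_1,g_2\colon\R^n\to\R$ are continuous convex functions. By Remark~\ref{R:dc}(b), the function $\tilde g:=g_1+g_2$ controls $g$. Since $g_1$ and $g_2$ are finite continuous convex functions on all of $\R^n$, they are bounded on every bounded subset of $\R^n$ (indeed, a continuous convex function on $\R^n$ is locally Lipschitz, hence bounded on compact sets, hence bounded on bounded sets). Consequently $\tilde g$ is a control function for $g$ that is bounded on bounded subsets of $\R^n$.

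Now apply Theorem~\ref{T:best}(a) with $X$ as given, $Y=\R^n$, $Z=\R$, the open convex set $A\subset X$, $B:=Y=\R^n$, the d.c.\ mapping $F\colon A\to\R^n$, and $G:=g\colon\R^n\to\R$. The hypotheses of part~(a) are satisfied: $B=Y$, and $G=g$ admits the control function $\tilde g$, which is bounded on bounded sets by the previous paragraph. Theorem~\ref{T:best}(a) therefore yields that $g\circ F$ is d.c.\ on $A$.
\end{proof}

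The main (and essentially only) obstacle is conceptual rather than technical: one must recognize that a globally defined d.c.\ function on $\R^n$ always possesses a control function bounded on bounded sets, so that the target space plays the role of $B=Y$ in Theorem~\ref{T:best}(a). This is precisely the feature that fails in the infinite-dimensional setting—by Remark~\ref{obec}, continuous convex functions on infinite-dimensional spaces can be unbounded on balls—which is why the theorem is stated for $F\colon A\to\R^n$ with finite-dimensional intermediate space. Since Theorem~\ref{T:best} carries the entire analytic burden (via Proposition~\ref{nove} and Observation~\ref{O:lip}), no further estimates are needed here; the proof is genuinely an immediate corollary, exactly as the paper advertises.
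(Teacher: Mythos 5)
Your proof is correct and follows exactly the paper's intended route: the paper states Theorem~\ref{cely} as an immediate consequence of Theorem~\ref{T:best}(a), leaving implicit precisely the verification you spell out, namely that a d.c.\ function $g=g_1-g_2$ on $\R^n$ is controlled by $g_1+g_2$, which is bounded on bounded sets since continuous convex functions on a finite-dimensional space are. Your closing remark correctly identifies the finite-dimensionality of the intermediate space as the essential point, in line with Remark~\ref{obec} and the counterexample in Corollary~\ref{jldc}.
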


Since each $C^2$ function $g: \R^n \to \R$ is d.c\ by Proposition \ref{C11} and (I) from Introduction,
 applying Theorem \ref{cely}  to $F=(f,h)$ and $g(x,y)=xy$, we obtain 
that $f\!\cdot\! h$ is d.c.\  
on $A$, whenever $f$ and $h$ are real d.c.\ functions on $A$. 
However, this fact is well-known (cf.~\cite{Hi}) and
  can be proved by a quite elementary way. 
 But the fact that, for instance, $\exp(f)$ and $\frac{fh}{1+f^2 + h^2}$ are  d.c.\ 
 on  $A$  seems to be new. (Hartman's results only imply that these functions are locally d.c.)
  
 For compositions of special d.c.\ functions, we obtain the following.
 
 \begin{theorem}\label{spec}
 Let $X$ be  a normed  linear space and  $A\subset X$, $B\subset \R^n$ convex sets. Let
  $F = (F_1,\dots,F_n)\colon A \to B$ be a d.c.\ mapping and $g\colon B \to \R$ a d.c.\ function.
  Then $g\circ F$ is d.c.\ on $A$, provided
at least one of the following
conditions is satisfied:
\begin{enumerate}
\item[(a)] $A$ is open, $F$ is d.c., and $g$ is a difference of two Lipschitz convex functions;
\item[(b)] each $F_i$ is a difference of two continuous convex functions, which are 
bounded on bounded subsets of $A$, and
the restriction of $g$ to each bounded convex subset of $B$
is a difference of two Lipschitz convex functions;
\item[(c)] $X = \R^k$, $A$ is open or closed, $F$ is d.c., and, for each $a \in A$, there exists $\ep>0$ such  $g$ is a difference of two Lipschitz convex functions on $B \cap B(F(a),\ep)$.
\end{enumerate}
\end{theorem}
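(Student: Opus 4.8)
The plan is to treat the three cases by reducing each to machinery already available, the main translation device being Remark~\ref{R:dc}, which converts hypotheses phrased as ``difference of convex functions'' into statements about control functions. Cases (a) and (b) will follow almost directly from Proposition~\ref{nove}. Case (c) is genuinely different: the condition on $g$ is only local (near each image point $F(a)$) and $A$ is allowed to be closed, so I would instead establish that $g\circ F$ is \emph{locally} d.c.\ and then invoke the finite-dimensional Hartman result, Corollary~\ref{C:loc}, to upgrade this to global delta-convexity.

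For (a), write $g=g_1-g_2$ with $g_1,g_2$ Lipschitz convex on $B$. By Remark~\ref{R:dc}(b), $g_1+g_2$ is a Lipschitz control function for $g$, and $g$ itself is Lipschitz; in particular the restriction of $g$ to every bounded convex subset of $B$ is Lipschitz and d.c.\ with a Lipschitz control function. Since $A$ is open and $F$ is d.c., condition (ii) of Proposition~\ref{nove} is satisfied, whence $g\circ F$ is d.c. For (b), write each $F_i=u_i-v_i$ with $u_i,v_i$ continuous convex and bounded on bounded subsets of $A$. By Remark~\ref{R:dc}(b), $u_i+v_i$ controls $F_i$; by Remark~\ref{R:dc}(a), $f:=\sum_{i=1}^n(u_i+v_i)$ controls $F$, and $f$ is bounded on bounded subsets of $A$. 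As each $F_i$ is itself bounded on bounded sets, so is $F$. Thus the restriction of $F$ to every bounded convex subset of $A$ is bounded and d.c.\ with a bounded control function, which is precisely hypothesis (i) of Proposition~\ref{nove}; the hypothesis on $g$ gives, again via Remark~\ref{R:dc}(b), the required Lipschitz behaviour of $g$ on bounded convex subsets of $B$. Proposition~\ref{nove}(i) then yields that $g\circ F$ is d.c.

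For (c), I would first show that $g\circ F$ is locally d.c.\ on $A$. Fix $a\in A$ and choose $\ep>0$ so that $g$ is a difference of two Lipschitz convex functions on $B_0:=B\cap B(F(a),\ep)$; by Remark~\ref{R:dc}(b), $g|_{B_0}$ is then Lipschitz and d.c.\ with a Lipschitz control function. Since $F$ is d.c.\ and hence continuous, there is $\delta>0$ with $F(A_0)\subset B_0$, where $A_0:=A\cap B(a,\delta)$ is a convex neighbourhood of $a$ in $A$. The restriction $F|_{A_0}$ is d.c., so Proposition~\ref{P:veza} applies to $(g|_{B_0})\circ(F|_{A_0})=(g\circ F)|_{A_0}$, showing it is d.c. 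Thus $g\circ F$ is locally d.c.\ on $A$, and since $A\subset\R^k$ is open or closed, Corollary~\ref{C:loc} (with $Y=\R$) gives that $g\circ F$ is d.c.\ on $A$.

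The only case requiring any real idea is (c): the combination of a merely local condition on $g$ with a possibly closed domain $A$ is exactly what the finite-dimensional patching of Corollary~\ref{C:loc} is designed to handle, and the finiteness of $\dim X$ enters there essentially. Cases (a) and (b) are bookkeeping, the work lying entirely in matching the ``difference of convex'' data to the control-function formulation demanded by Proposition~\ref{nove}.
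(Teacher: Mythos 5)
Your proof is correct and follows essentially the paper's own argument: your parts (b) and (c) coincide with the paper's proofs (Remark~\ref{R:dc} plus Proposition~\ref{nove} for (b); localization via Proposition~\ref{P:veza} followed by Corollary~\ref{C:loc} for (c)). The only deviation is in (a), where the paper first extends the two Lipschitz convex components of $g$ to all of $\R^n$ by Fact~\ref{F:1}(c) and then quotes Theorem~\ref{cely}, whereas you apply Proposition~\ref{nove}(ii) directly on $B$ --- an equally valid shortcut, since Theorem~\ref{cely} itself rests on Theorem~\ref{T:best}(a) and hence on Proposition~\ref{nove}.
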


\begin{proof}
To prove (a), observe that, by  Fact \ref{F:1} (c), we can suppose that $g$ is a difference of two Lipschitz convex functions
 on the whole $\R^n$. Hence $g\circ F$ is d.c.\ on $A$ by Theorem \ref{cely}. 

The part (b) follows from Remark~\ref{R:dc}(a) and (b), and 
Proposition \ref{nove}.
  
Let (c) hold. By Corollary \ref{C:loc} (or (I)), it is sufficient to show that 
   $g\circ F$ is locally d.c.\ on $A$. To this end, choose an $a \in A$ and find  $\ep>0$ such  $g$ is a difference of two Lipschitz convex functions on $B \cap B(F(a),\ep)$. Since $F$ is continuous, we can find $\delta>0$ such that
    $F(B(a,\delta) \cap A) \subset B \cap B(F(a),\ep)$. Using Proposition \ref{P:veza} (and Remark~\ref{R:dc}(b)), we obtain
     that $g\circ F$ is d.c.\ on  $B(a,\delta) \cap A$.
\end{proof}
 
Note that the case (c) follows also from proofs in \cite{H}. 
However, a claim of
P.~Hartman (see \cite{H}, p.708, lines 12-17), which would imply (via (I)
from Introduction)
that,  in (c), it is sufficient to write ``$g$ is d.c.\ and Lipschitz''
instead of ``$g$ is a difference of two Lipschitz convex functions'', is
false (presumably due to a misprint). 
This is shown by the following example.
  
  \begin{example}\label{chyba}
  Let $d\colon \R \to \R$ be the characteristic function of the set $S: = \bigcup_{n \in \N} [-2^{-2n+2},-2^{-2n+1})$
   and put $g(x): = \int_{-1}^x d$ for $x\in [-1,0]$. First we will show that $g$ is a Lipschitz d.c.\ function which
   is not a difference of two Lipschitz convex functions on $[-1/2,0]$. Since $d$ is bounded, $g$ is clearly Lipschitz.
    Clearly $g'_+(x) = d(x)$, $x \in [-1,0)$, since $d$ is right continuous. 
For $x\in[-1,0)$,
let $v(x)$ be the total
     variation of $d$ on the interval $[-1,x]$. It is easy to check that $v(x)= n-1$ for $x \in [-2^{1-n}, -2^{-n})$,
      and consequently $\int_{-1}^0 v = \sum_{n=1}^{\infty} (n-1) 2^{-n} < \infty$. Thus both $v$ and
       $w:= v -d$ are nondecreasing and (Lebesgue) integrable on $[-1,0)$. So, $c_1(x):= \int_{-1}^x v$ and
        $c_2(x):= \int_{-1}^x w$ are continuous convex functions on $[-1,0]$, and
        $$ g(x)  = \int_{-1}^x d =  \int_{-1}^x (v-w) = c_1(x) - c_2(x),\ \ \ \ x \in [-1,0].$$
        Therefore, $g$ is d.c.\ on $[-1,0]$. 
        
        Now, suppose to the contrary that $g=p-q$ on $[-1/2,0]$, where $p$, $q$ are convex Lipschitz functions
         on $[-1/2,0]$. It is well-known that then the right derivatives $p'_+$, $q'_+$ are finite, bounded 
and nondecreasing
          functions on $[-1/2,0]$. Further  $d = g'_+ = p'_+ - q'_+$ on $[-1/2,0)$. 
Let $V_a^b\phi$ denote the total variation of $\phi$ on $[a,b]$.
Then, for $ x \in [-1/2,0)$,
            \begin{multline*}
             v(x)- v(-1/2)=V_{-1/2}^x ( p'_+ - q'_+) \leq 
             V_{-1/2}^x \, p'_+  +   V_{-1/2}^x \, q'_+\\
              = \bigl(p'_+(x)- p'_+(-1/2)\bigr) + \bigl(q'_+(x)- q'_+(-1/2)\bigr)=: z(x),
             \end{multline*}
             which is a contradiction, since $\lim_{x\to 0+} v(x) = \infty$ and $z$ is a bounded function.
  
  Now, set $F(x):= -|x|$ for $x \in [-1,1]$. Then $g\circ F$ is not d.c.\ even on $(-1,1)$. Indeed, otherwise $g\circ F$ would
   be  a difference of two Lipschitz convex functions on $[-1/2,0]$, which is not true, since $g\circ F = g$ on $[-1/2,0]$.
\end{example}


\section{The main counterexample}


The main result of this section (Theorem~\ref{ex}) provides
a general construction of non-d.c.\ composed
mappings. Its proof uses some ideas from \cite{KM}.

The following lemma, implicitly contained in
\cite{KM}, is useful
for showing that certain functions or mappings are not d.c.

\begin{lemma}\label{ndc}
Let $X,Y$ be normed linear spaces, let $A\subset X$ be an open convex
set with $0\in A$, and let $F\colon A\to Y$ be a mapping. Suppose there exist
$\lambda\in(0,1)$ and a sequence of balls $B(x_n,\delta_n)\subset A$ such
that $\{x_n\}\subset\lambda A$, $\delta_n\to0$ and $F$ is unbounded on
each $B(x_n,\delta_n)$. Then $F$ is not d.c.\ on $A$.
\end{lemma}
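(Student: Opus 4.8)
The plan is to argue by contradiction: assume $F$ is d.c.\ on $A$ and fix a continuous convex control function $f$ for $F$ (Definition~\ref{D:dc}). I will show that this forces $F$ to be \emph{bounded} on $B(x_n,\delta_n)$ for all sufficiently large $n$, contradicting the hypothesis.

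First I would localize at $0$. Pick $r_0>0$ with $B(0,r_0)\subset A$. Since $f$ is continuous, it is bounded above, say by $K$, on some ball $B(0,\rho)$ with $0<\rho\le r_0$; shrinking $\rho$ if necessary and applying Observation~\ref{O:lip} to $F$ on $B(0,\rho)$ (whose control $f$ is now bounded above there), I may assume in addition that both $f$ and $F$ are Lipschitz on $B(0,\rho)$, with Lipschitz constants whose sum I denote $\Lambda_0$.

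The key step is an a priori bound
\[
\|F(w)\|\le f(w)+c_0+\Lambda_0\|w\|\qquad(w\in A),\qquad c_0:=\|F(0)\|-f(0).
\]
To obtain it, fix $y^*\in B_{Y^*}$ and observe that, by the definition of a control function (applied to the unit functional $-y^*$), the function $g:=f-y^*\circ F$ is continuous and convex on $A$. Being $\Lambda_0$-Lipschitz on $B(0,\rho)$, it has a subgradient $q\in X^*$ at $0$ with $\|q\|\le\Lambda_0$, so the subgradient inequality $g(w)\ge g(0)+q(w)$ gives $y^*\!\bigl(F(w)\bigr)\le f(w)-f(0)+\|F(0)\|+\Lambda_0\|w\|$. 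Taking the supremum over $y^*\in B_{Y^*}$ yields the claimed bound; the essential point is that $\Lambda_0$ and $c_0$ are uniform in $y^*$.

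Finally I would exploit $x_n\in\lambda A$ together with $\delta_n\to0$ to bound $f$ from above on each small ball. Writing $a_n:=x_n/\lambda\in A$, every $w\in B(x_n,\delta_n)$ is the convex combination $w=\lambda a_n+(1-\lambda)u$ with $u:=(w-x_n)/(1-\lambda)$, and $\|u\|=\|w-x_n\|/(1-\lambda)<\delta_n/(1-\lambda)$. Hence, once $n$ is large enough that $\delta_n\le(1-\lambda)\rho$, we have $u\in B(0,\rho)$, so convexity gives
\[
f(w)\le\lambda f(a_n)+(1-\lambda)f(u)\le\lambda f(a_n)+(1-\lambda)K=:S_n<\infty .
\]
Plugging this into the a priori bound shows $\|F(w)\|\le S_n+c_0+\Lambda_0(\|x_n\|+\delta_n)$ for \emph{every} $w\in B(x_n,\delta_n)$, i.e.\ $F$ is bounded on that ball --- a contradiction for any such large $n$.

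The step I expect to be the crux is this last one: the \emph{fixed} relative radius $\lambda<1$ is exactly what converts the (possibly enormous, but finite) near-boundary value $f(a_n)=f(x_n/\lambda)$ into a uniform finite upper bound for $f$ over the \emph{whole} ball $B(x_n,\delta_n)$, while $\delta_n\to0$ guarantees that the auxiliary point $u$ stays inside the fixed ball $B(0,\rho)$ on which $f$ is controlled. Note that neither boundedness nor convergence of $\{x_n\}$ is required: a contradiction is derived separately for each sufficiently large $n$.
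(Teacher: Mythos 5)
Your proof is correct and takes essentially the same route as the paper's: the identical convex-combination decomposition $x_n+h=\lambda\,(x_n/\lambda)+(1-\lambda)\frac{h}{1-\lambda}$, with $\delta_n\to0$ used to push the auxiliary point into a fixed ball around $0$ where $f$ and $F$ are controlled, and the convexity of $y^*\circ F+f$ exploited uniformly in $y^*\in B_{Y^*}$. The paper merely packages this differently --- it normalizes $f\ge0$ by subtracting an affine minorant and then applies the defect inequality $\bigl\|\lambda F(z_m)+(1-\lambda)F(\frac{h}{1-\lambda})-F(x_m+h)\bigr\|\le\lambda f(z_m)+(1-\lambda)f(\frac{h}{1-\lambda})$ at a single large $m$, where your uniform subgradient bound for $f-y^*\circ F$ plays exactly the role of that affine minorant.
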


\begin{proof}
Suppose the contrary. Let $f$ be a  control function for $F$
on $A$. We can suppose $f\geq 0$ (otherwise choose an affine function $g$ such that $g \leq f$ on $A$, and consider
 $f-g$ instead of $f$). For each $n$, let $z_n\in A$ be such that $x_n=\lambda z_n$.
Observe that $\|h\|<\delta_n$ implies
$x_n+h=\lambda z_n+(1-\lambda)\frac{h}{1-\lambda}$ and 
$\|\frac{h}{1-\lambda}\|<\frac{\delta_n}{1-\lambda}$. Now, fix $m\in\N$ so
large that $B(0,\frac{\delta_m}{1-\lambda})\subset A$ and both $F$ and $f$ are
bounded on $B(0,\frac{\delta_m}{1-\lambda})$. Then we have
\begin{align}
{\textstyle
\|\lambda F(z_m)+(1-\lambda)F(\frac{h}{1-\lambda})}&{\textstyle -F(x_m+h)\|}
\label{uno}\\
&{\textstyle\le \lambda f(z_m)+(1-\lambda)f(\frac{h}{1-\lambda})-f(x_m+h)}
\notag\\
&\le {\textstyle\lambda f(z_m)+(1-\lambda)f(\frac{h}{1-\lambda})}
\label{due}
\end{align}
whenever $\|h\|<\delta_m$. But this is a contradiction since the
expression \eqref{due} is bounded on $\{h: \|h\| < \delta_m\}$ while 
\eqref{uno} is not (because $F$ in 
unbounded on $B(x_m,\delta_m)$).
\end{proof}

\begin{lemma}\label{strexp}
Let $X$ be a normed linear space. Let $e\in S_X$, $e^*\in S_{X^*}$ and $c>0$
be such that $e^*(e)=1$ and the implication
\begin{equation}\label{hypo}
e^*(u)>1-\epsilon\text{ and } \|u\|\le1\ \ \Rightarrow\ \ 
\|u-e\|\le c\,\epsilon
\end{equation}
holds for $u\in X$ and $\epsilon>0$.
Then the following implication holds for $x\in X$ and $0<\delta<\frac{1}{2}\,$:
\begin{equation}\label{thesis}
{\textstyle\frac{1}{2}}\|x\|^2 < 
{\textstyle\frac{1}{2}}\|e\|^2 + e^*(x-e) +\delta\ \ \Rightarrow\ \ 
\|x-e\|<(1+2c)\sqrt{2\delta}\,.
\end{equation}
\end{lemma}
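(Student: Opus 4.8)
The plan is to first exploit the normalizations $\|e\|=1$ and $e^*(e)=1$ to rewrite the assumption in \eqref{thesis} in the cleaner equivalent form
\[
e^*(x) > \tfrac12\|x\|^2 + \tfrac12 - \delta .
\]
Since $\|e^*\|=1$ gives $e^*(x)\le\|x\|$, feeding this into the displayed inequality yields $\tfrac12(\|x\|-1)^2<\delta$, i.e.\ the key preliminary estimate
\[
\bigl|\,\|x\|-1\,\bigr| < \sqrt{2\delta}.
\]
Because $\delta<\tfrac12$ forces $\sqrt{2\delta}<1$, this already shows $\|x\|>1-\sqrt{2\delta}>0$, so $x\neq0$ automatically. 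I would then split into the two cases $\|x\|\le1$ and $\|x\|>1$, in each applying the implication \eqref{hypo} to an appropriate vector $u$ in the unit ball.

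If $\|x\|\le1$, I take $u=x$. Setting $\epsilon:=\tfrac12(1-\|x\|^2)+\delta>0$, one checks that $1-\epsilon=\tfrac12\|x\|^2+\tfrac12-\delta$, so the rewritten hypothesis says \emph{precisely} that $e^*(x)>1-\epsilon$; since $\|u\|=\|x\|\le1$, \eqref{hypo} gives $\|x-e\|\le c\epsilon$. It then remains only to estimate $\epsilon$: from $1-\|x\|^2=(1-\|x\|)(1+\|x\|)\le 2(1-\|x\|)<2\sqrt{2\delta}$ (using $\|x\|\le1$ and the preliminary estimate) together with the elementary bound $\delta\le\sqrt{2\delta}$ (valid as $\delta<2$), one obtains $\epsilon<2\sqrt{2\delta}$ and hence $\|x-e\|<2c\sqrt{2\delta}<(1+2c)\sqrt{2\delta}$.

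If $\|x\|>1$, I normalize and put $u=x/\|x\|$, so $\|u\|=1$. The elementary inequality $\|x\|^2+1\ge2\|x\|$ turns the rewritten hypothesis into $e^*(x)>\|x\|-\delta$, whence $e^*(u)>1-\delta/\|x\|$; applying \eqref{hypo} with $\epsilon:=\delta/\|x\|$ gives $\|u-e\|\le c\delta/\|x\|<c\delta$. Since $\|x-u\|=\|x\|-1=\bigl|\,\|x\|-1\,\bigr|<\sqrt{2\delta}$, the triangle inequality together with $\delta\le\sqrt{2\delta}$ yields
\[
\|x-e\|\le\|x-u\|+\|u-e\|<\sqrt{2\delta}+c\sqrt{2\delta}\le(1+2c)\sqrt{2\delta},
\]
finishing this case.

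The routine parts are the two elementary inequalities $\delta\le\sqrt{2\delta}$ and $\|x\|^2+1\ge2\|x\|$. The one point requiring genuine care---and the main obstacle---is the case $\|x\|>1$: here \eqref{hypo} cannot be invoked directly because $x$ leaves the unit ball, so one must pass to the radial projection $u=x/\|x\|$ and separately control the radial error $\|x-u\|=\|x\|-1$. This is exactly where the preliminary estimate $\bigl|\,\|x\|-1\,\bigr|<\sqrt{2\delta}$ (and the consequent fact $\|x\|>0$) does the decisive work.
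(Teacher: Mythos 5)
Your proof is correct and follows essentially the same route as the paper's: the same rewriting $e^*(x)>\tfrac12\|x\|^2+\tfrac12-\delta$, the same preliminary estimate $\bigl|\,\|x\|-1\,\bigr|<\sqrt{2\delta}$, the same case split at $\|x\|=1$ with \eqref{hypo} applied to $x$ itself or to the radial projection $x/\|x\|$, followed by the triangle inequality. Your choices of $\epsilon$ differ slightly from the paper's (the exact gap $\tfrac12(1-\|x\|^2)+\delta$ in the first case, and the bound $e^*(x)>\|x\|-\delta$ via $\|x\|^2+1\ge2\|x\|$ giving the cleaner estimate $\|u-e\|\le c\delta/\|x\|$ in the second), but these are only cosmetic variations of the paper's computations.
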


\begin{proof}
Let $x\in X$ and $0<\delta<\frac{1}{2}$ satisfy the left-hand side of
\eqref{thesis}. 
Then 
\[
\textstyle\frac{1}{2}\|x\|^2<e^*(x)-\frac{1}{2}+\delta\le\|x\|-
\frac{1}{2}+\delta
\]
which implies $\frac{1}{2}(1-\|x\|)^2<\delta$. Thus
$0<1-\sqrt{2\delta}<\|x\|<1+\sqrt{2\delta}.$

If $\|x\|\le1$, then $e^*(x)>\frac{1}{2}\|x\|^2+\frac{1}{2}-\delta>
\frac{1}{2}(1-\sqrt{2\delta})^2+\frac{1}{2}-\delta=1-\sqrt{2\delta}$. By
the assumption \eqref{hypo},
$\|x-e\|\le c\,\sqrt{2\delta}<(1+2c)\sqrt{2\delta}$.

If $\|x\|>1$, then (as above) 
$e^*(\frac{x}{\|x\|})>\frac{1}{\|x\|}\left(\frac{1}{2}\|x\|^2+\frac{1}{2}-\delta\right)
>\frac{1-\sqrt{2\delta}}{\|x\|}>\frac{1-\sqrt{2\delta}}{1+\sqrt{2\delta}}=
1-\frac{2\sqrt{2\delta}}{1+\sqrt{2\delta}}.$ By \eqref{hypo}, we have
$\|\frac{x}{\|x\|}-e\|\le c\,\frac{2\sqrt{2\delta}}{1+\sqrt{2\delta}}$.
Consequently,
$\|x-e\| \le \|x-\frac{x}{\|x\|}\| + \|\frac{x}{\|x\|}-e\| \le 
(\|x\|-1) + \frac{2c\sqrt{2\delta}}{1+\sqrt{2\delta}}<
\sqrt{2\delta}\bigl(1+\frac{2c}{1+\sqrt{2\delta}}\bigr)<(1+2c)\sqrt{2\delta}$.
\end{proof}

\begin{lemma}\label{baze}
For each infinite dimensional normed linear space, there exists a
countable biorthogonal system $\{e_n,e^*_n\}\subset X\times X^*$ such that:
\[
\textstyle
\|e_n\|=1\ (n\in\N),\ \ 
R:=\sup_{n}\|e^*_n\|<\infty,\ \ 
r:=\inf_{m\ne n}\|e_m-e_n\|>0.
\]
\end{lemma}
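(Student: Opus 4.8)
The plan is to realise $\{e_n\}$ as a normalised basic sequence and to take for $\{e_n^*\}$ its coordinate functionals, extended to all of $X$ by the Hahn--Banach theorem. The crucial point is that a \emph{finite} basis constant forces the coordinate functionals to be \emph{uniformly} bounded; once that is in hand, the separation condition is automatic, since for $m\ne n$ one has $e_m^*(e_m-e_n)=1$ and hence $\|e_m-e_n\|\ge 1/\|e_m^*\|\ge 1/R$.

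First I would fix a summable sequence $\epsilon_n>0$ (say $\epsilon_n=2^{-n}$), set $K:=\prod_n(1+\epsilon_n)<\infty$, and build $e_n\in S_X$ inductively by Mazur's device. Start with any $e_1\in S_X$. Given $e_1,\dots,e_n$, put $E_n:=\mathrm{span}\{e_1,\dots,e_n\}$; its unit sphere is compact, so I choose a finite $\delta$-net $y_1,\dots,y_k$ of it with $\delta$ so small that $(1-\delta)^{-1}\le 1+\epsilon_n$, together with norming functionals $\phi_j\in S_{X^*}$ satisfying $\phi_j(y_j)=1$. Since $X$ is infinite dimensional, $\bigcap_{j=1}^k\ker\phi_j$ has finite codimension and thus contains a unit vector $e_{n+1}$. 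A one-line estimate then yields, for all $y\in E_n$ (say $\|y\|=1$) and all scalars $\lambda$, that $\|y+\lambda e_{n+1}\|\ge\phi_j(y)\ge 1-\delta$ for a net point $y_j$ with $\|y-y_j\|\le\delta$, whence $\|y\|\le(1+\epsilon_n)\|y+\lambda e_{n+1}\|$.

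These inequalities iterate to show that $\{e_n\}$ is basic with basis constant at most $K$; equivalently, the partial-sum projections $S_m$ on $\mathrm{span}\{e_k\}$ satisfy $\|S_m\|\le K$. Defining $e_n^*$ on $\mathrm{span}\{e_k\}$ as the $n$-th coordinate functional and using $a_n e_n=S_nx-S_{n-1}x$, I obtain $|e_n^*(x)|\le 2K\|x\|$ on the span, so a norm-preserving Hahn--Banach extension gives $R:=\sup_n\|e_n^*\|\le 2K<\infty$. Thus $\{e_n,e_n^*\}$ is biorthogonal with $\|e_n\|=1$, and the first paragraph gives $r\ge 1/(2K)>0$.

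The main obstacle is exactly the \emph{uniform} bound on $\|e_n^*\|$. The naive idea of adding biorthogonal pairs one at a time, keeping each functional below a fixed constant, fails: the norm of the projection onto $\mathrm{span}\{e_1,\dots,e_n\}$ along the common kernel of $e_1^*,\dots,e_n^*$ grows with $n$, and with it the minimal norm of a new coordinate functional. Mazur's construction circumvents this by controlling instead the partial-sum projections, whose complementary subspace is the \emph{tail} of the sequence and whose norms stay bounded by the convergent product $\prod(1+\epsilon_n)$. This is precisely the step where the compactness of the finite-dimensional unit spheres and the infinite-dimensionality of $X$ enter.
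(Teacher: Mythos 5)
Your proof is correct, and at the top level it follows the same strategy as the paper: realize $\{e_n\}$ as a normalized basic sequence, take norm-preserving Hahn--Banach extensions of the coordinate functionals, and get the separation from biorthogonality via $1=e_m^*(e_m-e_n)\le R\,\|e_m-e_n\|$, which is word for word the paper's final estimate. Where you genuinely diverge is in how the basic sequence is produced. The paper quotes the standard Banach-space theorem (every infinite-dimensional Banach space contains a normalized basic sequence), applies it to the \emph{completion} of $X$, and then must invoke the small perturbation lemma to move the sequence back into the possibly incomplete space $X$; the equi-boundedness of the extended coefficient functionals is then cited as well known. You instead run Mazur's construction directly in $X$, observing correctly that it uses only Hahn--Banach, compactness of finite-dimensional unit spheres, and infinite-dimensionality --- never completeness --- so the detour through the completion and the perturbation lemma is unnecessary; moreover, since Grunblum's inequality and the identity $a_ne_n=S_nx-S_{n-1}x$ live on the algebraic span, no convergence of expansions (hence no completeness) is needed for the bound $\|e_n^*\|\le 2K$ either. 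This makes your proof self-contained and yields the explicit constants $R\le 2K=2\prod_n(1+\epsilon_n)$ and $r\ge 1/(2K)$, where the paper's citations leave them implicit. One cosmetic point: your net estimate should be read as $\|y+\lambda e_{n+1}\|\ge|\phi_j(y+\lambda e_{n+1})|=|\phi_j(y)|\ge\phi_j(y_j)-\|y-y_j\|\ge 1-\delta$, using $e_{n+1}\in\ker\phi_j$; this is clearly what you intend, and with $\delta<1$ it also forces $e_{n+1}\notin E_n$, so the induction is sound.
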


\begin{proof}
The completion of $X$ contains a normalized basic sequence
$\{e_n\}$ (see \cite[Theorem~6.14]{FHHMPZ}). By the ``small perturbation lemma'' 
\cite[Theorem~6.18]{FHHMPZ}, we may assume that $\{e_n\}\subset X$. Let $e^*_n$
($n\in\N$)
be Hahn-Banach extensions of the corresponding coefficient functionals; it is 
well-known that they are equi-bounded (cf.\ \cite[p.164]{FHHMPZ}). Moreover, for
$m\ne n$, we have $\|e_n-e_m\|\ge 1/R$, since $1 = e^*_n(e_n-e_m) \leq R\, \|e_n-e_m\|$.
\end{proof}

\begin{lemma}\label{construction}
Let $X,Y$ be normed linear spaces, $X$ infinite dimensional. 
Then, for
each bounded sequence $\{y_n\}\subset Y$, there exists a d.c.\ mapping
$\Phi\colon X\to Y$ such that:
\begin{enumerate}
\item[(a)] $\Phi=0$ outside $B_X$;
\item[(b)] $\Phi$ admits a control function that is Lipschitz on bounded
sets;
\item[(c)] $\{y_n\}\subset \Phi(B_X)$ and 
$\Phi(X)\subset\mathrm{conv}\bigl[\{0\}\cup\{y_n\}_{n\in\N}\bigr]$.
\end{enumerate}
\end{lemma}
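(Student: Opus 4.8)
The plan is to realize $\Phi$ as a sum of disjointly supported d.c.\ ``bumps'', one attached to each $y_n$, and then to manufacture a single convex control function for the whole sum. First I would apply Lemma~\ref{baze} to fix a normalized, separated sequence $\{e_n\}$ with $r:=\inf_{m\ne n}\|e_m-e_n\|>0$ (together with the equi-bounded functionals $e_n^*$, kept for later), and set $M:=\sup_n\|y_n\|<\infty$. For each $n$ I would choose a norming functional and put $v_n(x):=\tfrac12\|x\|^2-e_n^*(x)+\tfrac12$, which is continuous convex with $v_n\ge 0$ and $v_n(e_n)=0$. Invoking Lemma~\ref{strexp} at $e_n$ (with a common constant $c$) I would fix one small $\delta>0$ so that each sublevel set $\{v_n<\delta\}$ lies in a ball $B(e_n,\rho)$ with $\rho=(1+2c)\sqrt{2\delta}<r/2$; since the $e_n$ are $r$-separated these balls, hence the sets $\{v_n<\delta\}$, are pairwise disjoint, and after shrinking $\delta$ they sit inside $B_X$. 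Fixing once and for all a $C^{1,1}$, nonincreasing profile $\Theta\colon[0,\infty)\to[0,1]$ with $\Theta\equiv1$ near $0$ and $\Theta\equiv0$ on $[\delta,\infty)$, I set $\psi_n:=\Theta\circ v_n$ and $\Phi:=\sum_n\psi_n\,y_n$.

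Properties (a) and (c) are then almost immediate. Since $\supp\psi_n\subset\{v_n<\delta\}\subset B_X$, the map $\Phi$ vanishes off $B_X$. Because $\psi_n(e_n)=\Theta(0)=1$ while disjointness forces $e_n\notin\{v_m<\delta\}$, hence $\psi_m(e_n)=0$, for $m\ne n$, we get $\Phi(e_n)=y_n$, so $\{y_n\}\subset\Phi(B_X)$. Disjointness also shows that for each $x$ at most one $\psi_n(x)\in[0,1]$ is nonzero, whence $\Phi(x)=\psi_n(x)y_n+(1-\psi_n(x))\cdot0\in\mathrm{conv}[\{0\}\cup\{y_n\}]$.

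The substance of the argument is (b): producing \emph{one} convex $f$, Lipschitz on bounded sets, with $y^*\circ\Phi+f$ convex for every $\|y^*\|\le1$, i.e.\ with $f+\sum_n\beta_n\psi_n$ convex for all coefficients $\beta_n=y^*(y_n)$, $|\beta_n|\le M$. Each $\psi_n=\Theta\circ v_n$ is itself d.c.\ (by Proposition~\ref{P:veza}, as $v_n$ is convex and $\Theta$ is d.c.\ and Lipschitz), but its natural control grows like $v_n\sim\tfrac12\|\cdot\|^2$, so the individual controls cannot simply be summed; moreover a crude $\sup_n$ of them is, near $e_k$, dominated by the terms with $n\ne k$ (where $v_n$ is large) and so fails to control $\psi_k$ exactly where control is needed. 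The strategy I would follow is to split the concave content of $\beta_n\psi_n$ into a ``radial'' part, proportional to the convex function $\tfrac12\|\cdot\|^2$ and therefore dominated \emph{simultaneously for all $n$} by a single fixed quadratic term $\tfrac{C}{\delta}\|x\|^2$ (legitimate in an arbitrary normed space, since adding a large multiple of the convex $\|\cdot\|^2$ only increases convexity and keeps $f$ Lipschitz on bounded sets by Fact~\ref{F:1}(b)), and a ``transverse'' part which, by the smallness of the supports guaranteed by Lemma~\ref{strexp}, is confined to the disjoint balls $B(e_n,\rho)$; this part I would absorb by an additional convex term built from the biorthogonal data and arranged, via $e_m^*(e_n)=\delta_{mn}$ and the separation $r$, so that near each $e_k$ the \emph{correct} index is the active one.

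The main obstacle is precisely this final assembly: building a single globally convex control function that supplies the requisite local convexity at every bump without the ``wrong'' bumps dominating, and verifying convexity across the support boundaries, where one must check that the descent of $\psi_n$ down to $0$ produces no concave crease for either sign of $\beta_n$. Once that is arranged, the remaining bounds — disjointness, $\supp\psi_n\subset B_X$, and the Lipschitz-on-bounded-sets estimate for $f$ — are routine consequences of Lemmas~\ref{baze} and~\ref{strexp} and of Fact~\ref{F:1}.
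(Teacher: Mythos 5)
There is a genuine gap, and it sits exactly where the paper's key new construction lies. You invoke Lemma~\ref{strexp} at each $e_n$ ``with a common constant $c$'' in the \emph{original} norm, but the hypothesis \eqref{hypo} of that lemma says that $e_n$ is strongly exposed in $B_X$ by $e_n^*$ with a linear modulus --- and a general normed space provides nothing of the sort. Concretely, in $c_0$ with the standard basis $\{e_n\}$ and coordinate functionals $\{e_n^*\}$, the point $u=e_n+e_m$ ($m\neq n$) satisfies $e_n^*(u)=1$ and $\|u\|=1$ while $\|u-e_n\|=1$; your sublevel sets $\{v_n<\delta\}$ then contain the whole segments $e_n+[-1,1]e_m$ and pairwise intersect (e.g.\ $v_n(e_n+e_m)=v_m(e_n+e_m)=0$), so both the smallness and the disjointness of the supports fail. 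Moreover, your claim $v_n\ge 0$ already forces $\|e_n^*\|=1$, which the biorthogonal system of Lemma~\ref{baze} only gives with constant $R\ge 1$; and if you instead replace the $e_n^*$ by Hahn--Banach norming functionals, you lose the biorthogonality you say you keep ``for later''. The paper resolves all of this at once by renorming: it takes $C:=\overline{\mathrm{conv}}\bigl(\rho B_X\cup\{\pm e_n\}_{n\in\N}\bigr)$ with $0<\rho<1/R$ as the new unit ball, computes directly (writing $C=\mathrm{conv}(\{e_n\}\cup C_n)$) that each $e_n$ is uniformly strongly exposed by $e_n^*$ with constant $c=\frac{2}{1-R\rho}$, and only then applies Lemma~\ref{strexp} to $g=\frac{1}{2}|\!|\!|\cdot|\!|\!|^2$. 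This renorming is the heart of the proof, and your sketch omits it entirely.

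The second gap is that part (b) --- which you rightly call ``the substance of the argument'' --- is left as a strategy: you explicitly label the assembly of a single convex control function ``the main obstacle'' without carrying it out, and with your smooth profile $\Theta\circ v_n$ the obstacle is real, since the natural controls of the bumps grow quadratically and neither their sum nor their supremum works. The paper sidesteps this by choosing the bump profile so the obstacle never arises: with $a_n:=g(e_n)+e_n^*(\cdot-e_n)+\delta$ and $D_n:=\{g<a_n\}$, the bump is $\frac{1}{\delta}\bigl[\max\{g,a_n\}-g\bigr]y_n$, a difference of convex functions whose concave part is the \emph{same} $g$ for every $n$ and whose convex part agrees with $g$ off a neighbourhood of $D_n$. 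Then the single function
\[
h=\frac{s}{\delta}\,\sup_{n\in\N}\max\{g,a_n\}+\frac{s}{\delta}\,g,
\qquad s=\sup_n\|y_n\|,
\]
is convex and Lipschitz on bounded sets, and convexity of $y^*\circ H+h$ is verified \emph{locally}: on $D_n+\delta B_X$ it equals $\frac{s+y^*(y_n)}{\delta}\max\{g,a_n\}+\frac{s-y^*(y_n)}{\delta}\,g$, a nonnegative combination of convex functions, while off $\overline{\bigcup_n D_n}$ it equals $h$. The max-structure also removes automatically the ``crease'' at the support boundaries that worries you. So your reductions of (a) and (c) to disjointness are fine in outline, but the two steps that constitute the actual proof --- the renorming that legitimizes Lemma~\ref{strexp}, and the explicit profile-plus-control that proves (b) --- are missing.
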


\begin{proof}
Let $\{e_n\}$, $\{e^*_n\}$, $R$ and $r$ be as in Lemma~\ref{baze}.
Observe that $R\ge 1$ since $e_1^*(e_1)=1$.
Fix an arbitrary $\rho\in(0,\frac{1}{R})$. 
The symmetric closed convex set
\[
C:=\overline{\mathrm{conv}}\bigl(
\rho B_X \cup \{\pm e_n\}_{n\in\N}
\bigr)
\] 
is the unit ball of an equivalent norm $|\!|\!|\cdot|\!|\!|$ on $X$
since $\rho B_X\subset C\subset B_X$. 

Fix an arbitrary $n\in\N$. It is easy to see that
$|\!|\!|e^*_n|\!|\!|=\max e^*_n(C)=e^*_n(e_n)=1$, which implies that also
$|\!|\!|e_n|\!|\!|=1$. Let $\epsilon>0$ and $u\in C$ be such that 
\[
e^*_n(u)>1-\epsilon.
\]
Observe that $C=\mathrm{conv}\left(\{e_n\}\cup C_n\right)$ where
\[
C_n=\overline{\mathrm{conv}}\bigl(
\rho B_X \cup \{- e_k\}_{k\in\N} \cup \{e_k\}_{k\in\N\setminus\{n\}}
\bigr).
\]
Thus we can write $u=(1-\lambda)e_n+\lambda v$ where $v\in C_n$ and
$0\le\lambda\le1$. Since 
$$1-\epsilon<e^*_n(u)\le 1-\lambda +\lambda\sup e^*_n(C_n)\le 1-\lambda
+\lambda R\rho,$$
we easily get $\lambda<\frac{\epsilon}{1-R\rho}$. Consequently,
$$
|\!|\!|u-e_n|\!|\!|=
\lambda|\!|\!|v-e_n|\!|\!|\le\frac{2\epsilon}{1-R\rho}\,.
$$

Denote $g(x)=\frac{1}{2}|\!|\!|x|\!|\!|^2$. By Lemma~\ref{strexp}, for
 $n\in\N$, $x\in X$ and $0<\delta<\frac{1}{2}$ the following implication holds:
\[
g(x)< g(e_n)+e^*_n(x-e_n)+\delta\ \ \Rightarrow\ \ 
\|x-e_n\|\le|\!|\!|x-e_n|\!|\!|<(1+{\textstyle \frac{4}{1-R\rho}})\sqrt{2\delta}.
\]
Since the sequence $\{e_n\}$ is uniformly discrete, it is
possible to fix a
$\delta\in(0,\frac{1}{2})$ so small that the open convex sets
\[
D_n=\{x\in X: g(x)< g(e_n)+e^*_n(x-e_n)+\delta\}
\]
satisfy $\mathrm{dist}_{\|\cdot\|}(D_m,D_n)>\delta$ whenever $m\ne n$.
We have $e_n\in D_n$ for each $n$.

Define $H\colon X\to Y$ by
\[
H(x)=\begin{cases}
{\textstyle\frac{1}{\delta}}\bigl[
g(e_n)+e^*_n(x-e_n)+\delta-g(x)
\bigr]y_n &\text{if $x\in D_n$;}\\
0 &\text{for $x\notin\bigcup_{n\in\N}D_n$.}
\end{cases}
\]
It is easy to see that $H$ is continuous since we have
\begin{equation}\label{nadn}
\textstyle
H(x)=\frac{1}{\delta}\bigl[
\max\{g(x), g(e_n)+e^*_n(x-e_n)+\delta\} -g(x)
\bigr]y_n\,,\ \ 
x\in D_n+\delta B_X.
\end{equation}
Put $s:=\sup_{n\in\N}\|y_n\|$.
We claim that the formula
\begin{equation}\label{kontr}
\textstyle
h(x)=\frac{s}{\delta}\,\sup_{n\in\N}
\bigl(\max\{g(x),g(e_n)+e^*_n(x-e_n)+\delta\}\bigr)\,
+ \frac{s}{\delta}\,g(x)
\end{equation}
defines a control function for $H$, which is Lipschitz on bounded sets.
First, observe that $h(0)=\frac{s}{\delta}\max\{0,\frac{1}{2}-
1+\delta\}=0$. Moreover, since $g$ is Lipschitz on bounded sets and the
functionals $e^*_n$ ($n\in\N$) are equi-Lipschitz, \eqref{kontr} defines
a real convex function that is Lipschitz on bounded sets.
Fix $y^*\in B_{Y^*}$. To prove that the function $\psi:=y^*\circ H +h$
is convex, it is sufficient to show that it is locally convex. 
For
$x\notin\bigcup_n\overline{D_n}=\overline{\bigcup_n D_n}$, we have
$\psi(x)=h(x)$.
For $x\in D_n+\delta B_X$, we have $g(x)\ge g(e_k)+e^*_k(x-e_k)+\delta$
whenever $k\ne n$, and hence
\[
\textstyle
h(x)=\frac{s}{\delta}\,
\max\{g(x),g(e_n)+e^*_n(x-e_n)+\delta\}\,
+ \frac{s}{\delta}\,g(x)\,,\ \ x\in D_n+\delta B_X.
\]
Consequently, \eqref{nadn} implies that, on the set $D_n+\delta B_X$, the function
\[
\textstyle
\psi(x)=
\frac{s+y^*(y_n)}{\delta}\,\max\{g(x),g(e_n)+e^*_n(x-e_n)+\delta\}\,
+ \frac{s-y^*(y_n)}{\delta}\,g(x)
\]
is convex (since it is a sum of convex functions).

Observe that $H(e_n)=y_n$. Moreover, for each $x\in D_n$,
\begin{multline*}
\textstyle
0<g(e_n)+e^*_n(x-e_n)+\delta-g(x)\le
\frac{1}{2}+|\!|\!|x|\!|\!|-1+\delta-\frac{1}{2}|\!|\!|x|\!|\!|^2
\\
\textstyle  =
\delta-\frac{1}{2}\bigl(|\!|\!|x|\!|\!|-1\bigr)^2\le\delta\,.\ \ \ 
\end{multline*}
Thus, for each $n$, the image $H(D_n)$ is contained in the
segment $[0,y_n]$. Since the support of $H$ is contained in $2B_X$, the
mapping $\Phi(x):=H(2x)$ has all the required properties (note that $\vf(x):= h(2x)$ clearly controls
$\Phi$, cf. \cite[Lemma 1.5]{VeZa}). 
\end{proof}

\begin{theorem}\label{ex}
Let $X,Y,Z$ be normed linear spaces, $X$ infinite dimensional. Let
$A\subset X$ be an open convex set, let $B\subset Y$ be a convex set, and let 
$G\colon B\to Z$ be a mapping which is unbounded on a bounded subset of
$B$. Then there exists a d.c.\ mapping $F\colon A\to B$ such that
$G\circ F$ is not d.c.\ on $A$.
\end{theorem}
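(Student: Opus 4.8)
The plan is to build $F$ by scattering infinitely many shrinking affine copies of the model mapping of Lemma~\ref{construction}, each copy sweeping out all the points $y_k$ at which $G$ blows up; the failure of delta-convexity of $G\circ F$ will then come from Lemma~\ref{ndc}, while the delta-convexity of $F$ itself will be extracted from the gluing Lemma~\ref{L:hartman}. After a translation we may assume $0\in A$ (delta-convexity and the conclusion are translation invariant). Since $G$ is unbounded on some bounded set $S\subset B$, choose $\{y_k\}\subset S$ with $\|G(y_k)\|\to\infty$; this sequence is bounded. Fix any $p\in B$; by convexity $\mathrm{conv}\big(\{p\}\cup\{y_k\}_k\big)\subset B$. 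Apply Lemma~\ref{construction} to the bounded sequence $\{y_k-p\}$ to get a d.c.\ mapping $\Phi\colon X\to Y$ with $\Phi=0$ off $B_X$, a control $\vf$ that is Lipschitz on bounded sets, $\{y_k-p\}\subset\Phi(B_X)$ and $\Phi(X)\subset\mathrm{conv}(\{0\}\cup\{y_k-p\})$. Put $\Psi:=p+\Phi$; then $\Psi$ is d.c., $\Psi=p$ off $B_X$, $\{y_k\}\subset\Psi(B_X)$ and $\Psi(X)\subset\mathrm{conv}(\{p\}\cup\{y_k\})\subset B$.

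Next comes the placement. Fix $\lambda\in(0,1)$, and (this choice is what makes the last step possible) take the centres $x_n$ on a normalized, uniformly discrete basic sequence $\{e_n\}$ supplied by Lemma~\ref{baze}, scaled into $\lambda A$, together with radii $\delta_n\downarrow0$ so small that the balls $\overline{B(x_n,\delta_n)}\subset A$ are pairwise at positive distance. Define $F(x):=\Psi\big(\tfrac{x-x_n}{\delta_n}\big)$ for $x\in B(x_n,\delta_n)$ and $F(x):=p$ elsewhere. Because $\Phi$ vanishes on $\{\|u\|\ge1\}$, the map $F$ is well defined and continuous, and $F(A)\subset\mathrm{conv}(\{p\}\cup\{y_k\})\subset B$. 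Each copy satisfies $F\big(B(x_n,\delta_n)\big)\supset\{y_k\}$, so $G\circ F$ is unbounded on every $B(x_n,\delta_n)$; since $\delta_n\to0$ and $\{x_n\}\subset\lambda A$, Lemma~\ref{ndc} applied to $G\circ F$ shows at once that $G\circ F$ is not d.c.

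It remains to prove that $F$ is d.c., and this is the heart of the matter. I would verify the hypotheses of Lemma~\ref{L:hartman}: produce relatively open convex sets $D_m\nearrow A$ with $\mathrm{dist}(D_m,A\setminus D_{m+1})>0$ on each of which $F$ is d.c.\ with a Lipschitz control. The key is to arrange that each $D_m$ meets only finitely many of the balls $B(x_n,\delta_n)$; then $F|_{D_m}$ is a finite sum of affinely reparametrised copies of $\Psi$ plus a constant, hence (by Remark~\ref{R:dc} and Fact~\ref{F:1}) d.c.\ with a Lipschitz control there. To disentangle the trapped copies I would cut $A$ by the half-spaces determined by the biorthogonal functionals $e_n^*$, taking $D_m$ to be essentially $A\cap\bigcap_{n>m}\{x:e_n^*(x)<\beta_n\}$ with thresholds $\beta_n$ chosen so that the $n$-th ball lies in $\{e_n^*>\beta_n\}$; these sets are convex and nested by construction.

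The genuine obstacle --- and the reason the statement collapses in finite dimensions --- is to make this exhaustion simultaneously convex, positively nested, and \emph{exhaust all of} $A$: covering $A$ forces $e_n^*(x)<\beta_n$ for all large $n$ and every $x\in A$, i.e.\ the functionals must tend to $0$ pointwise (weak$^*$) on $A$. This is exactly where infinite-dimensionality is indispensable, and I expect it to be the main technical step: one must strengthen the selection in Lemma~\ref{baze} so that the $e_n^*$ form a weak$^*$-null sequence (via a Josefson--Nissenzweig-type choice) and locate the $x_n$ accordingly. In finite dimensions no such separating family exists, the trapped copies cannot be separated, and indeed --- consistently with Hartman's result (I) --- an $F$ of this kind cannot be d.c. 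Once the sets $D_m$ are in place, Lemma~\ref{L:hartman} yields global delta-convexity of $F$, completing the proof.
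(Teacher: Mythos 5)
Your overall architecture coincides with the paper's: shrunken copies of the bump $\Phi$ from Lemma~\ref{construction} placed on a sequence of balls with centers in $\lambda A$ and radii tending to $0$, Lemma~\ref{ndc} to destroy delta-convexity of $G\circ F$, and a gluing argument for delta-convexity of $F$ (your translation $\Psi=p+\Phi$ is a fine substitute for the paper's normalization $0\in B$). The genuine gap is exactly where you flag it: the exhaustion. As written, the sets $D_m=A\cap\bigcap_{n>m}\{x: e^*_n(x)<\beta_n\}$ do not work with Lemma~\ref{baze} alone. You need $\beta_n<e^*_n(x_n)\approx t$ (to exclude the $n$-th ball from $D_m$ for $m<n$) while, for covering, every fixed $x\in A$ must satisfy $e^*_n(x)<\beta_n$ eventually; since the $\beta_n$ stay bounded away from $0$, this is a Josefson--Nissenzweig-type requirement on the $e^*_n$, which the coefficient functionals of an arbitrary basic sequence need not satisfy --- and you only assert, without proof, that the selection in Lemma~\ref{baze} can be so strengthened. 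Moreover, even granting weak$^*$-null $e^*_n$, your $D_m$ as literally defined need not satisfy the hypothesis $\mathrm{dist}(D_m,A\setminus D_{m+1})>0$ of Lemma~\ref{L:hartman}: a point of $D_m$ close to a cutting hyperplane $\{e^*_n=\beta_n\}$ with $n>m+1$ can be at distance $0$ from points of $A\setminus D_{m+1}$ violating that same constraint. This is repairable (two-level thresholds with margins, plus intersecting with balls $B(0,m)$ since your $D_m$ may be unbounded while the controls are only Lipschitz on bounded sets), but it is unaddressed; a cleaner fix is to verify the hypotheses of Proposition~\ref{P:HKM}(ii) instead of raw Lemma~\ref{L:hartman}, since there one needs only convex sets $C_n\nearrow A$ with nonempty interior and bounded-above controls, the distance and openness bookkeeping being done once and for all in Lemma~\ref{L:dn}.

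The paper avoids this entire difficulty with a cheaper device. By \cite{BFV} there is a continuous convex function $h$ on $X$ with $h(0)=0$ that is unbounded on a small ball $B(0,r)$ with $B(0,2r)\subset A$ (stated there for Banach spaces; for a normed $X$ pass to the completion and restrict, using density). The sublevel sets $A_k=\{x\in A: h(x)<k,\ \|x\|<k\}$ are automatically open, convex, and increase to $A$, and the unboundedness of $h$ on $B(0,r)$ is precisely what permits choosing $v_k\in B(0,r)$ and $\delta_k<1/k$ with $B(v_k,2\delta_k)\subset A_{k+1}\setminus A_k$. Each $A_k$ then meets only finitely many supports of the copies $F_k(x)=\Phi\bigl((x-v_k)/\delta_k\bigr)$, so $F=\sum_k F_k$ is d.c.\ on each $A_k$ with a Lipschitz control, and Proposition~\ref{P:HKM} yields delta-convexity of $F$ outright, while $v_k\in\frac12 A$ and $\delta_k\to 0$ feed Lemma~\ref{ndc} just as in your argument. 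In short: the infinite-dimensionality enters through the existence of a convex function unbounded on a ball rather than through a Josefson--Nissenzweig selection; your route can very likely be completed, but only after you prove or cite the JN-type lemma and repair the distance condition, neither of which your sketch supplies.
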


\begin{proof}
We can (and do) suppose that $0\in A$.
Fix $r\in(0,1)$ such that $B(0,2r)\subset A$. By  \cite{BFV}, there exists a continuous convex function $h$ on $X$ such that
 $h(0) =0$ and $\sup_{x \in B(0,r)} \, h(x) = \infty$. For $k \in \N$, set 
$$A_k := \{x \in A:\ h(x) < k,\; \|x\|<k\}.$$ 
Clearly
  each $A_k$ contains $0$, is open and convex; moreover,  $A_k \nearrow A$. 
It is easy to see that, for each $k \in \N$, we can choose
   $v_k \in B(0,r)$ and $0 < \delta_k < 1/k$ such that $B(v_k, 2\delta_k) \subset A_{k+1}\setminus A_k$.

We can (and do) suppose that $0\in B$.
Let $\{y_n\}\subset B$ be a bounded sequence such that
$\|G(y_n)\|\to\infty$, and let $\Phi$ be the corresponding mapping from
Lemma~\ref{construction}.
For each $k\in\N$, define $F_k\colon X\to Y$ by 
\[
F_k(x)=\Phi\left(\frac{x-v_k}{\delta_k}\right).
\]
Since the supports of these mappings are pairwise disjoint and each
$A_k$ intersects only finitely many of them, the mapping
\[
F\colon A\to Y\,,\qquad
F(x):=\sum_{k\in\N} F_k(x)
\]
is well-defined and continuous. Observing that $\vf_n(x):= \vf(\frac{x-v_k}{\delta_k})$ controls $F_k$
 if $\vf$ controls $\Phi$ (cf.  \cite[Lemma 1.5]{VeZa}), we obtain that $F$ is d.c.\ on each $A_k$ with a
Lipschitz (hence bounded) control function. By Proposition~\ref{P:HKM}, $F$ is d.c.\ on $A$.
Moreover,
$F(A)\subset \bigcup_k F_k(X)\subset B$ by Lemma~\ref{construction}(c).
Since $G\circ F$ is unbounded on each $B(v_k,\delta_k)$ and
$v_k\in\frac{1}{2}A$, Lemma~\ref{ndc} implies that
$G\circ F$ is not d.c.\ on $A$.
\end{proof}

\begin{corollary}\label{jldc}
Let $X$ be an infinite dimensional normed linear space, and $A\subset X$ a nonempty open convex set.
\begin{enumerate}
\item[(a)] There exists a positive d.c.\ function $f$ on $A$ such that $1/f$ is not d.c.
\item[(b)] There exists a locally d.c.\ function $g$ on $A$, which is not d.c.
\end{enumerate}
\end{corollary}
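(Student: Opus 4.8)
The plan is to deduce both parts of Corollary~\ref{jldc} from Theorem~\ref{ex} by choosing, in each case, an appropriate unbounded mapping $G$ defined on a bounded set. The whole point is that Theorem~\ref{ex} manufactures, for any $G$ that is unbounded on a bounded subset of its domain $B$, a d.c.\ mapping $F\colon A\to B$ whose composition $G\circ F$ fails to be d.c. So the work reduces to packaging the target function as such a $G$.

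For part (a), I would take $Y=Z=\R$ and let $G$ be essentially the reciprocal: the natural choice is $B:=(0,\infty)$ and $G(t):=1/t$, which is continuous (hence locally d.c., even convex) on $B$ but unbounded on the bounded subset $(0,1)\subset B$. Applying Theorem~\ref{ex} yields a d.c.\ mapping $F\colon A\to B=(0,\infty)$ with $G\circ F=1/F$ not d.c.\ on $A$. Since $F$ takes values in $(0,\infty)$ it is a positive d.c.\ function, so setting $f:=F$ gives exactly the assertion: $f$ is positive and d.c., but $1/f$ is not d.c. The only mild point to check is that the $F$ produced is genuinely real-valued and d.c.\ in the ordinary sense, which is immediate from Remark~\ref{R:dc}(a) since $Y=\R$.

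For part (b), the idea is to feed Theorem~\ref{ex} a $G$ that is itself \emph{locally} d.c.\ but unbounded on a bounded set, and then observe that local delta-convexity passes to the composition via Proposition~\ref{lokdc}. Concretely I would again take $B=(0,\infty)$ and $G(t)=1/t$ (or any continuous, hence locally d.c., function unbounded on a bounded piece of $B$). Theorem~\ref{ex} produces a d.c.\ mapping $F\colon A\to B$ with $g:=G\circ F$ not d.c.\ on $A$. On the other hand, $F$ is d.c.\ and therefore locally d.c., $B$ is open, and $G$ is locally d.c.\ on $B$; hence Proposition~\ref{lokdc} gives that $g=G\circ F$ is locally d.c.\ on $A$. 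Thus $g$ is locally d.c.\ but not d.c., as required. (Note that part (b) can also be read off from part (a) directly: the function $1/f$ of part~(a) is the composition of the d.c.\ map $f$ with the locally d.c.\ reciprocal, so it is locally d.c.\ by Proposition~\ref{lokdc} yet not d.c.)

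I do not anticipate a genuine obstacle here, since the heavy lifting is entirely contained in Theorem~\ref{ex}; the proof is a short deduction. The one place demanding a little care is verifying the hypotheses of Theorem~\ref{ex} for the chosen $G$ — namely that $G$ is unbounded on a \emph{bounded} subset of $B$ — and, in part~(b), confirming that the openness of $B$ and the local delta-convexity of both $F$ and $G$ are exactly what Proposition~\ref{lokdc} requires. Both checks are routine for $G(t)=1/t$ on $(0,\infty)$.
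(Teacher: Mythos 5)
Your proof is correct and essentially identical to the paper's: part (a) is obtained by applying Theorem~\ref{ex} with $B=(0,\infty)$ and $G(y)=1/y$, and part (b) follows since $g:=1/f$ is locally d.c.\ by Proposition~\ref{lokdc} but not d.c. Your explicit verification that $G$ is unbounded on the bounded set $(0,1)$ is a sensible (if routine) addition the paper leaves implicit.
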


\begin{proof}
Applying Theorem~\ref{ex} with $B=(0,\infty)$ and $G(y)=1/y$, we obtain (a). Now, (b)
follows from (a), since $g :=1/f$ is locally d.c.\ by Proposition~\ref{lokdc} 
(or (II) in Introduction).
\end{proof}


\bigskip
\bigskip

\subsection*{Acknowledgment}
The research of the first author was partially supported by the Ministero
dell'Universit\`a e della Ricerca of Italy.
The research of the second author was partially supported by the grant
GA\v CR 201/06/0198  from the Grant Agency of
Czech Republic and partially supported by the grant MSM 0021620839 from
 the Czech Ministry of Education.

\end{document}